\theoremstyle{remark}
\newcommand{\SelfArrow}{\raisebox{3pt}{\adjincludegraphics[valign=m, height=6pt]{./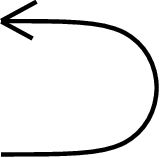}}}
\newcommand{\HFred}{HF^{\mathrm{red}}}
\renewcommand{\HF}{\mathbf{HF}}
\begin{document}

\title{On $h$-cobordisms of complexity $2$}

\author{Roberto Ladu}

\address{Fakult\"{a}t f\"{u}r Mathematik, Ruhr Universit\"{a}t Bochum,  Universit\"{a}tsstrasse 150, D-44801 Bochum, Germany} 
\email{roberto.ladu.math@gmail.com}

\begin{abstract} We study $5$-dimensional $h$-cobordisms of Morgan-Szab\'o complexity $2$. We compute the
 monopole Floer homology and the action of the twisting involution of the protocork boundary associated with such $h$-cobordisms,
obtaining an obstruction for  $h$-cobordisms between  exotic pairs to have minimal complexity.
We construct the first examples of $h$-cobordisms of non-minimal, in fact, arbitrarily large, complexity between an exotic pair of closed, $1$-connected $4$-manifolds. Further applications include strong corks. 
\end{abstract}

\maketitle
\newcommand{\ellinv}{\tau^{-}}
\newcommand{\cC}{\mathcal{C}}
\section{Statement of results.}
Morgan and Szab\'o associate to a $5$-dimensional $h$-cobordism $Z:X_0\to X_1$ with $\pi_1(Z) = 1$,  an even, non-negative number $\cC(Z)$ called complexity \cite{MorganSzabo99}. $Z$ is a cylinder if and only if $\cC(Z)=0$, otherwise $\cC(Z)\geq 2$. In this paper we study $h$-cobordisms  with $\cC(Z) =2$. In this case $X_0$ and $X_1$ are related by a twist of the protocork $P_0$ in Figure~\ref{Pic:P_0} \cite{Ladu}.
Understanding how this operation affects Seiberg-Witten invariants boils down to computing the monopole Floer homology of $Y^3:=\partial P_0$ \cite{KM} and the action of the twisting involution $\tau$ on $\HMfrom_{-1}(Y)$.
\begin{theorem}\label{thm:HMYTauAction} The Floer homology of $Y$ is supported in the torsion \spinc structure and 
\begin{equation*}
	\HMfrom_\bullet(Y) \simeq \hat \cT_{(-3)} x_3 
		\oplus \hat \cT_{(-2)} \langle x_1, x_2\rangle \oplus \hat \cT_{(-1)} x_0 \oplus \F_{(-1)}\langle \Delta,\alpha_1\rangle \oplus \F_{(-2)}\langle \beta_1,\beta_2\rangle.
\end{equation*}
Under this isomorphism, $x_0  = \HMfrom(P_0\setminus \ball)(\hat 1)$ and  $\tau_*$ acts on $\HMfrom_{-1}(Y)$ as
\begin{align*}
&\begin{bmatrix}
				1& 0 & 0\\
				1 & 1& 0\\
				0 & 0 & 1 \\
			\end{bmatrix}: \F x_0\oplus \F \Delta \oplus \F \alpha_1 \to\F x_0\oplus \F \Delta \oplus \F \alpha_1
\end{align*}
In particular $\tau_*(x_0) = x_0 + \Delta$ and 
$U\cdot \Delta = 0$.
\end{theorem}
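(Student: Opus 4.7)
The plan is to obtain a surgery/Heegaard description of $Y=\partial P_0$ from the handle picture in Figure~\ref{Pic:P_0}, then carry out an explicit chain-level computation of $\HMfrom_\bullet(Y)$ (working, if convenient, in Heegaard Floer and transporting the answer via the Kutluhan--Lee--Taubes / CGH equivalence, since $P_0$ is presented by a small number of handles). The vanishing in non-torsion \spinc structures would follow from a direct examination of those structures on the diagram, while the four infinite $\hat{\cT}$-towers together with the four extra generators $\Delta,\alpha_1,\beta_1,\beta_2$ should fall out of the chain complex together with the absolute $\mathbb Q$-grading and $U$-action; the claimed gradings $(-3),(-2),(-1)$ are then a bookkeeping exercise with the correction-term formulas for the relevant surgery.

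To identify $x_0$ I would compute the cobordism map induced by $P_0\setminus\ball\colon S^3\to Y$ at the chain level from its handle decomposition. Since $\hat 1$ generates $\HMfrom_{\bullet}(S^3)$ in degree $0$ and the degree shift is determined by $\chi$ and $\sigma$ of $P_0\setminus\ball$, the image must lie in $\HMfrom_{-1}(Y)$; matching it against the three generators in that grading singles out $x_0$ as the bottom of the $\hat{\cT}_{(-1)}$-tower, the remaining candidates $\Delta,\alpha_1$ being ruled out by their position in the chain complex (e.g.\ $\Delta$ is $U$-torsion while $x_0$ is not).

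The computation of $\tau_*$ is the crux. The twisting involution $\tau$ on $Y$ is explicit from the protocork construction and should be realisable as a symmetry of the Heegaard diagram permuting certain attaching curves. Functoriality, $\tau_*^{2}=\mathrm{id}$, preservation of the $\mathbb Q$-grading and $U$-equivariance immediately pin down most of the matrix on $\HMfrom_{-1}(Y)\simeq \F x_0\oplus \F\Delta\oplus \F\alpha_1$: in particular $\alpha_1$ and $\Delta$ must be fixed and the block is necessarily unipotent (we are working over $\F=\mathbb F_2$, so $\tau_*^{2}=\mathrm{id}$ only forces $(\tau_*-\mathrm{id})^{2}=0$, not $\tau_*=\mathrm{id}$). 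The main obstacle is therefore to establish that the off-diagonal term is genuinely $\tau_*(x_0)=x_0+\Delta$ rather than $x_0$.

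To force the $\Delta$-correction I would compare $P_0\setminus\ball$ with its $\tau$-twisted version: the two cobordisms $S^3\to Y$ differ by $\tau$ on the outgoing boundary, so the discrepancy between their induced maps on $\HMfrom$ is exactly $(\tau_*-\mathrm{id})(x_0)$. A direct handle-level comparison of the two decompositions, or a surgery-exact-triangle argument relating $P_0$ to its twist, should produce the claimed $\Delta$-term; by grading constraints no other contribution is possible. The identity $U\Delta=0$ is then immediate from the $U$-module structure read off in the first step, since $\Delta$ lies outside every infinite $U$-tower.
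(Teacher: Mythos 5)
There is a genuine gap at the step you call "immediate". Over $\F=\Z/2$, the constraints you list (functoriality, preservation of the $\Q$-grading, $U$-equivariance, $\tau_*^2=\mathrm{id}$, preservation of the reduced submodule) do \emph{not} force $\tau_*$ to fix $\Delta$ and $\alpha_1$: the map swapping two degree $-1$ generators of $\HMred_{-1}(Y)\simeq\F^2$ is a grading-preserving, $U$-equivariant involution (both generators are $U$-torsion), and in characteristic $2$ every involution is unipotent, so unipotency rules nothing out. Proving that $\tau_*$ restricts to the identity on $\HMred_{-1}(Y)$ is in fact the bulk of the work in the paper: one extends $\tau$ over equivariant surgery cobordisms linking $Y=Y_{0,0,\infty}$ to $Y_{0,1,\infty}$ and to the Akbulut cork boundary $Y_{\infty,1,\infty}$, uses the Lin--Ruberman--Saveliev Lefschetz-number theorem to control the cork involution there, and needs the injectivity of $\HMred_{-1}(Y_{0,1,\infty})\to\HMred_{-1}(Y_{\infty,1,\infty})$, which in turn requires computing $HF^{red}(Y_{-1,1,\infty})$ --- an entire section of the paper, via a $(1,1)$-knot diagram and the mapping cone formula. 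None of this is replaced by the formal properties you invoke, so your matrix is pinned down only up to an arbitrary involution of the reduced block.

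Two further points. First, your mechanism for the off-diagonal entry only reformulates the problem: that $(\tau_*-\mathrm{id})(x_0)$ lies in the reduced part of degree $-1$ is essentially formal (and is the content of the earlier protocork result the paper quotes), but the theorem's content is that this difference class is \emph{nonzero}; "a handle-level comparison should produce the $\Delta$-term" supplies no reason for nonvanishing. The paper gets it from a concrete embedding: twisting $P_0$ inside $K3\#\bar\CP^2$ yields $3\CP^2\#20\bar\CP^2$, so pairing with the complement changes the mod $2$ Seiberg--Witten invariant, forcing $\Delta\neq 0$. You need some input of this kind. Second, the "direct chain-level computation" of $\HMfrom_\bullet(Y)$ is not routine bookkeeping: $Y$ is a graph manifold whose plumbing graph has cycles, so the standard lattice-homology/algorithmic computations of $\HF^-$ do not apply; the paper instead runs two surgery exact triangles (with the algebraic lemmas of the appendix to split off the towers and track gradings) to reduce to the positron cork boundary, whose Floer homology is known from Guth's large-surgery computation. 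Your plan would need either to carry out such a reduction or to produce an explicit diagrammatic computation, neither of which is sketched.
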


Theorem~\ref{thm:HMYTauAction}, gives obstructions to  $\cC(Z)=2$, we apply this in item(\ref{thm:complexity1}) below to produce the first examples of $h$-cobordisms $Z$ with $\cC(Z)>2$ between a pair of closed exotic $4$-manifolds.
\begin{theorem}\label{thm:complexity}	  	Let $X_0 $ be the elliptic surface $E(1)$ and let $X_1$ be a Dolgachev surface or the result of Fintushel-Stern's knot surgery on $E(1)$.
	\begin{enumerate}
		\item \label{thm:complexity1}Then  there are at least $2^{17}17!$ $h$-cobordisms $X_0\#17 \bar \CP^2 \to X_1\# 17 \bar\CP^2$ of complexity strictly larger than $2$ (hence at least $4$).
		\item \label{thm:complexity2} Moreover, $\forall N > 0, \exists m_N \in \N$ such that there are at least $2^{m_N}m_N!$ $h$-cobordisms from $X_0\# m_N\bar\CP^2 $ to $ X_1\#m_N\bar \CP^2$ of complexity larger than $N$.
	\end{enumerate}
\end{theorem}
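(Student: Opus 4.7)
The plan is to combine two ingredients: a classical stabilization result that produces a reference diffeomorphism $\Phi \colon X_0 \# 17 \bar\CP^2 \to X_1 \# 17 \bar\CP^2$, and the rigidity of complexity-$2$ $h$-cobordisms provided by Theorem~\ref{thm:HMYTauAction}, which will rule out $\cC=2$ for each member of a suitably large family of $h$-cobordisms built on top of $\Phi$.

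For item~(\ref{thm:complexity1}), I first invoke the Mandelbaum--Moishezon stabilization theorem, and its extension by Gompf covering knot-surgered $E(1)$, to obtain $\Phi$ after summing with a single $\bar\CP^2$; summing with $16$ further copies yields the desired diffeomorphism between the stabilized manifolds. For every self-diffeomorphism $g$ of $X_1 \# 17 \bar\CP^2$, let $W_g$ be the $h$-cobordism from $X_0 \# 17 \bar\CP^2$ to $X_1 \# 17 \bar\CP^2$ obtained by composing $\Phi$ with the mapping cylinder of $g$. Let $G := (\mathbb{Z}/2)^{17} \rtimes S_{17}$, of order $2^{17}\cdot 17!$, be the hyperoctahedral group realised inside the mapping class group of $X_1 \# 17 \bar\CP^2$ by the reflections along the $17$ exceptional $(-1)$-spheres together with the permutations of the $\bar\CP^2$ summands. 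Pairwise inequivalence of $\{W_g\}_{g\in G}$ follows because an equivalence $W_g \simeq W_{g'}$ would force $g^{-1}g'$ to be pseudo-isotopic to the identity, while the $H_2$-actions of distinct elements of $G$ are already distinct.

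The crux is to show $\cC(W_g) > 2$ for every $g \in G$. Assume instead $\cC(W_g) = 2$; then by the structure theorem for minimal-complexity $h$-cobordisms from \cite{Ladu}, $W_g$ must arise as a single twist along a protocork $P_0$ embedded in its interior, and by functoriality the Floer map $\HMfrom(W_g)$ factors through $\HMfrom(Y)$ with its non-trivial contribution governed by $\tau_*$ as computed in Theorem~\ref{thm:HMYTauAction}. In particular, the only modification of the distinguished class $x_0 = \HMfrom(P_0 \setminus \ball)(\hat 1)$ permitted by the twist is addition of the $U$-torsion class $\Delta$. On the other hand, since $W_g$ is the mapping cylinder of $g\circ\Phi$, the induced Floer map equals the action of $g\circ\Phi$ on $\HMfrom$, which I would compute directly from the explicit $H_2$-action of the reflections and permutations. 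One then checks that this action does not land in the coset $x_0 + \F\Delta$ for any non-trivial $g$ in $G$, delivering the required contradiction.

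For item~(\ref{thm:complexity2}), I would iterate the same strategy. An $h$-cobordism of complexity $\le N$ decomposes into at most $N/2$ elementary protocork-type twists, each contributing a bounded perturbation to the induced map on $\HMfrom$; therefore the set of Floer maps realisable by complexity-$\le N$ $h$-cobordisms grows at most polynomially in $N$. Choosing $m_N$ large enough that $2^{m_N}m_N! = |(\mathbb{Z}/2)^{m_N} \rtimes S_{m_N}|$ outruns this polynomial bound forces at least that many $W_g$'s to have complexity exceeding $N$. The main obstacle I anticipate is precisely this quantitative step: converting the boundary-level computation of $\tau_*$ in Theorem~\ref{thm:HMYTauAction} into a genuine rigidity statement for cobordism maps, made delicate by the $b_2^+(E(1))=1$ chamber structure, which may require passing to Bauer--Furuta-type refinements or to $\mathrm{Pin}(2)$-equivariant Floer theory.
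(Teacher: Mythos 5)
Your construction collapses at the first step, for two independent reasons. First, the diffeomorphism $\Phi\colon X_0\#17\bar\CP^2\to X_1\#17\bar\CP^2$ you want to import does not exist: Mandelbaum--Moishezon-type dissolution is a statement about summing with $\CP^2$, not $\bar\CP^2$. Summing with $\bar\CP^2$ is a blow-up, and blow-ups of a Dolgachev surface (or of knot-surgered $E(1)$) remain non-diffeomorphic to the corresponding rational surface for every number of blow-ups; indeed the whole point of the theorem is that the stabilized pair is still exotic. Second, and more fundamentally, even if such a diffeomorphism existed, the $h$-cobordisms you build are mapping cylinders of diffeomorphisms $g\circ\Phi$, hence product cobordisms with $\cC(W_g)=0$. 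You can never prove such cobordisms have complexity $>2$, and the contradiction you aim for (assuming $\cC(W_g)=2$ and confronting it with Theorem~\ref{thm:HMYTauAction}) is vacuous: no member of your family can have nonzero complexity. For item~(\ref{thm:complexity2}) the "at most $N/2$ elementary twists, polynomial growth of realisable Floer maps" step is not a statement available in the literature and is not how the complexity bound is obstructed.

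The paper's mechanism is different in kind. By Kreck's theorem, $h$-cobordisms correspond bijectively to \emph{isometries} of the intersection forms, so one prescribes an isometry that is \emph{not} realized by any diffeomorphism: $R_m=\Phi\circ\rho_m$, where $\rho_m$ is the reflection in $\alpha_m=(2m+1)H-\sum_iE_i$ and $\Phi$ is an isometry built from the Gompf nucleus, shown (via a gluing formula along $\torus^3$, where $\HMfrom_\bullet(\torus^3;\Gamma_\eta)\simeq\R$ is killed by $U$) to preserve the chamber-dependent mod $2$ invariants $\fm_\pm$. The resulting $h$-cobordism $C_m$ then relates \spinc structures whose invariants differ, with moduli space of formal dimension $d(\sstruc_m)=m^2+m-2$. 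The obstruction is quantitative and boundary-level: by \cite[Cor.~1.2]{Ladu}, a complexity-$\le 2$ $h$-cobordism forces the ends to be related by a twist of the single protocork $P_0$, and variation of invariants can only occur in formal dimension at most twice the $U$-torsion order of $\Delta$; Theorem~\ref{thm:HMYTauAction} gives $U\cdot\Delta=0$, so dimension $\le 2<4=d(\sstruc_2)$ (with $m=2$ giving exactly $17$ blow-ups). Part~(\ref{thm:complexity2}) uses finiteness of protocorks of complexity $\le N$ to get a uniform bound $2\ell$ and then chooses $m_N$ with $m_N^2+m_N-2>2\ell$; the count $2^{m}m!$ comes from composing with the signed-permutation isometries of the $(-I)$ summand, distinct isometries giving distinct $h$-cobordisms again by Kreck. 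Finally, the $b^+=1$ chamber issue you flag is handled with the metric/chamber-dependent invariants $\fm_\pm$ and the metric-dependent $\HMarrow$ map; no Bauer--Furuta or $\mathrm{Pin}(2)$ refinement is needed.
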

This improves \cite[Thm. 1.1]{MorganSzabo99} in two ways.  Firstly,  \cite[Thm. 1.1]{MorganSzabo99}  shows the existence of a sequence of inertial (i.e. $X_0=X_1$) $h$-cobordisms $Z_N:E(1)\#m_N\bar\CP^2\to E(1)\# m_N\bar\CP^2$ of diverging complexity, we extended this to non-inertial ones in item(\ref{thm:complexity2}).
Secondly, given $N>0$ the results of \cite{MorganSzabo99} alone are not enough to determine for which $m_N$  the condition $\cC(Z_N)>N$ is achieved. We show that in the case $N=2$,  $m_N = 17$ is sufficient thanks to Theorem~\ref{thm:HMYTauAction} and to the fact
that there is only one protocork to study in this case.

\paragraph{} Our second application regards corks. A cork is a pair $(C,f)$ where $C$ is a contractible  $4$-manifold and $f:\partial C\to \partial C$ is an involution that does not extend to a diffeomorphism of $C$. The Akbulut cork \cite{Akbulut91} was the first example discovered.  One motivations behind the study of corks
is that any  pair of $1$-connected $4$-manifolds $(X_0, X_1)$ which is exotic (i.e. $X_0\simeq_\mathrm{C^{0}} X_1$, $X_0\not\simeq_\mathrm{C^\infty} X_1$), is related by a cork twist operation i.e.  $X_1$ can be obtained from $X_0$ by removing  a copy of some cork   and gluing
it back via the boundary involution \cite{CHFS,Matveyev}. 
The involution $f$ is said to be \emph{strongly non-extendable} (\emph{SNE} for short) if it does not extend to any $\Z/2$-homology ball bounding $\partial C$ \cite[Thm. D]{LinRubermanSaveliev2018}. The Akbulut's cork involution is SNE  \cite{LinRubermanSaveliev2018}. A sufficient condition for being SNE, which applies to most known corks, is given in \cite{DaiHeddenMallick}. The first example of cork
with a not SNE involution was given by \cite{HaydenPiccirillo}.

Any cork is supported by a protocork \cite[Prop. 2.16]{Ladu}, thus it is natural to wonder if
strong non-extendability descends from some property of the involution of a supporting protocork.
More specifically one can ask if any cork supported by $P_0$ has a SNE involution. 
We did not manage to prove this in general, but we have the following partial result.

\begin{corollary}\label{cor:StrongCork}  Let $Y'$ be an homology sphere with an involution $\tau'$.  Suppose that  there exists an equivariant cobordism $Q:Y\to Y'$, where $Y=\partial P_0$ is equipped with the protocork involution $\tau$.
In addition suppose that the cobordism map induced by $Q$ restricts to an isomorphism, with $\F = \Z/2$-coefficients, 
$\HMfrom(Q; \F): \HMfrom_{-1}(Y)\to\HMfrom_{-1}(Y')$, in particular it is a homomorphism of degree $0$.
Then $(Y', \tau')$ does not extend to any $\Z/2$-homology ball.
\end{corollary}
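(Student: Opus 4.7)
The plan is to argue by contradiction: suppose $\tau'$ extends to a smooth involution $\tilde\tau'$ of a $\Z/2$-homology ball $W'$ with $\partial W' = Y'$. The strategy is to produce an element of $\HMfrom_{-1}(Y')$ which is simultaneously forced to lie in $\ker U$ (by transporting the structure of $\tau_*$ from Theorem~\ref{thm:HMYTauAction} via $\HMfrom(Q)$) and to be $U$-non-torsion (by a Fr\o yshov-type property of the filling $W'$).

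First, I extract the distinguished element. By universal coefficients, $W'$ is also a $\Q$-homology ball, so $\chi(W')=1$ and the Lefschetz number of $\tilde\tau'$ equals $1$; hence $\tilde\tau'$ has a fixed point $p$. An equivariant tubular neighborhood of $p$ produces a $\tilde\tau'$-invariant smooth $4$-ball $B \subset W'$, and $W' \setminus B$ becomes an equivariant cobordism $(S^3,\iota) \to (Y',\tau')$ with $\iota := \tilde\tau'|_{\partial B}$ an orientation-preserving diffeomorphism of $S^3$. Cerf's theorem $\pi_0(\mathrm{Diff}^+(S^3))=0$ renders $\iota$ smoothly isotopic to the identity, so $\iota_* = \mathrm{id}$ on $\HMfrom_\bullet(S^3)$. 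Equivariance of $\HMfrom(W'\setminus B)$ then yields $\tau'_*(y'_0) = y'_0$ for $y'_0 := \HMfrom(W'\setminus B)(\hat 1)$. The grading shift of $\HMfrom(W'\setminus B)$ vanishes since $\chi(W'\setminus B)=0$ and the intersection form of $W'$ is trivial over $\F$; thus $y'_0 \in \HMfrom_{-1}(Y')$.

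Second, I transfer $\tau_*$ to $\tau'_*$ along $Q$. Equivariance of $Q$ gives $\tau'_* \circ \HMfrom(Q) = \HMfrom(Q) \circ \tau_*$, and the isomorphism hypothesis in degree $-1$ identifies the $\tau'_*$-fixed subspace of $\HMfrom_{-1}(Y')$ with $\HMfrom(Q)(\F\Delta \oplus \F\alpha_1)$, the fixed subspace of $\tau_*|_{\HMfrom_{-1}(Y)}$ extracted from the Jordan form of Theorem~\ref{thm:HMYTauAction}. Both $\Delta$ and $\alpha_1$ are $U$-torsion: $U\cdot\Delta = 0$ is stated in the theorem, and $U\cdot\alpha_1 = 0$ because $\F_{(-1)}\langle\Delta,\alpha_1\rangle$ is a reduced summand sitting in degree $-1$. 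Since $\HMfrom(Q)$ commutes with $U$, every $\tau'_*$-fixed class in $\HMfrom_{-1}(Y')$ lies in $\ker U$, whence $U\cdot y'_0 = 0$.

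Finally, the Fr\o yshov inequality applied to both $W'$ and $-W'$ forces $\delta(Y')=0$, and the cobordism $W'\setminus B$ realises this bound by sending $\hat 1$ onto the generator of the infinite $\F[U]$-tower of $\HMfrom_\bullet(Y')$ modulo the reduced part; thus $U^n y'_0 \ne 0$ for every $n \ge 0$, contradicting $U\cdot y'_0 = 0$ and ruling out the extension $\tilde\tau'$. The main obstacle in the plan is precisely this last step: the Fr\o yshov-type non-vanishing of $y'_0$ for $\Z/2$-homology ball fillings is standard in monopole Floer theory, but one must carefully align the KM grading conventions with the normalization of Theorem~\ref{thm:HMYTauAction} and with the degree-$0$ hypothesis on $\HMfrom(Q)$.
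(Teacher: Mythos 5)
Your argument is correct and is essentially the paper's proof: both transport the matrix of Theorem~\ref{thm:HMYTauAction} through the equivariant isomorphism $\HMfrom(Q)$ and then contradict the $\tau'$-invariance of the relative invariant of the hypothetical $\Z/2$-homology ball, whose non-triviality on the tower (your Fr\o yshov-type step) is exactly the paper's ``$a\neq 0$ since the filling is negative definite.'' The only differences are cosmetic: the paper gets $\tau'_*$-invariance of the relative invariant directly from naturality under the extended involution (no Lefschetz fixed point, invariant ball, or Cerf needed), and it phrases the contradiction as $(\tau'_*-\mathrm{id})$ of the relative invariant being $a\Delta'\neq 0$ rather than via fixed classes being $U$-torsion.
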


Two examples of corks which are supported by $P_0$ and satisfy the hypothesis of Corollary~\ref{cor:StrongCork}, are 
 the Akbulut cork and the manifold on the right of \cite[Fig. 4]{DaiHeddenMallick} with parameters $n=1, n_1= 0, n_2 = 1$.
 The latter has boundary  $(Y_{1,\infty, 1}, \tau_{1,\infty,1})$ in Figure~\ref{Pic:Yabc}.

\subsection{Further comments and related work.} $Y$ is the  graph manifold with plumbing graph given by two spheres with trivial normal bundle, intersecting thrice with sign $+,-,+$.  $\HMfrom_\bullet(Y)$ is isomorphic to the Heegaard-Floer homology  $\HF^-(Y)$ \cite{HM=HF_1}, and for a large class of graph manifolds,  $\HF^-$ can be computed via general methods \cite{OSPlumbed,NemethiAR, NemethiLattice, OSSLattice, Zemke2021Equivalence}.  Moreover Hanselman devised an algorithm to compute $\widehat{HF}$ for every graph manifold \cite{HanselmanGraphManifolds}.
However, since our graph has cycles and we are interested in the $\HF^-$ variant, these results do not cover our case.
Graphs similar to ours appear in \cite{SunukjianThesis}, and \cite{Horvat}. The former considers plumbing graphs arising from 
two spheres with trivial normal bundle intersecting \emph{algebraically} zero times while in the latter the spheres have
normal bundles of Euler number $n,m\geq 4$ and intersect \emph{geometrically} twice with the same sign.

We carry out the computation reducing it to understading,  the Floer homology of the boundary of the first positron cork \cite{AkbulutMatveyevConvex}. The latter homology was computed with $\F$-coefficients in \cite{Guth2024Exotic}. 
The computation of the action of $\tau$ relies on  \cite[Thm. A ]{LinRubermanSaveliev2018} and equivariant surgery exact sequences. Understanding the maps in the exact sequences requires us to compute the Floer homology of 
 a certain homology sphere, this is done following an approach due to \cite{HalesThesis}.

Theorem~\ref{thm:HMYTauAction} tells us about how to glue up Seiberg-Witten invariants along the protocork $P_0$, using $\tau$ as gluing map \cite{KM}. This can be compared with \cite{MorganSzabo96}, which instead gives information about the variation in Seiberg-Witten invariants 
depending on the embedding of protocorks and not on the protocork used (the type of \cite{MorganSzabo96} is not the complexity of \cite{MorganSzabo99}).

Theorem~\ref{thm:complexity} is similar in spirit to \cite{MorganSzabo99}.  The requirement that the ends of the $h$-cobordism
are closed manifolds, is important. Indeed, with different methods,  we produced examples of $h$-cobordisms of diverging complexity between non-diffeomorphic, $1$-connected, $4$-manifolds with non-empty boundary  already in \cite[Thm. 1.5]{LaduUniversal}. 
However this result, similarly to \cite{MorganSzabo99}, does not give explicitly the exotic pair possessing an $h$-cobordism 
of complexity larger than $N$, for a given $N$.
Upper bounds on the complexity of inertial $h$-cobordisms between rational surfaces follow from the work of Schwartz  \cite{SchwartzComplexity}.

Finally, we remark that the original version of this paper used $\Z$-coefficients,  using $\F$ instead avoids having to worry about signs in the exact triangle and is compatible with the version proved in monopole Floer homology, which is in \cite{MonopolesAndLensSpaceSurgeries}.
A mild refinement of Theorem~\ref{thm:HMYTauAction} involving $\Z$-coefficients can be found in Proposition~\ref{prop:refinement} below.

\subsection{Structure of the paper.} We set up our notation in  section~\ref{Sec:Notation}. The proof of Theorem~\ref{thm:HMYTauAction} occupies section~\ref{Sec:ComputingHMfromY} to \ref{sec:Computation-11Infty}. In section~\ref{Sec:ComputingHMfromY}, we compute $\HMfrom_\bullet(Y)$.
In section~\ref{Sec:ActionInvolution}, we compute the action of the involution, this relies on Claim~\ref{claim:HFY-11infty} which is proved in the next section. 
In section~\ref{sec:Computation-11Infty} we compute $\HMred(Y_{-1,1,\infty};\Z)$ (see Figure~\ref{Pic:Yabc}), resorting to computing the Floer homology of a surgery on a $(1,1)$-knot and using the mapping cone formula \cite{OzsvathSzaboMappingCone}.
In order to spell clearly all the conventions adopted and to introduce our notation, this section begins with a brief recap of the method of \cite[Sec. 6.2]{OzsvathSzaboKnotFloer} to compute the $CFK^\infty$ complex of a $(1,1)$-knot.

In section~\ref{Sec:ApplicationComplexity} we give the proof  of Theorem~\ref{thm:complexity}. The interested reader 
can jump directly to this section as we only use Theorem~\ref{thm:HMYTauAction} at the very end of the proof.
Finally, in section~\ref{Sec:CorollaryProof} we prove Corollary~\ref{cor:StrongCork}.
In  \autoref{Appendix} we prove a lemma in commutative algebra that we invoke in multiple occasions.

\vspace{0.3cm}
\paragraph{}
\textit{Acknowledgements.} The author would like to thank Sudipta Ghosh, Marco Golla, Riccardo Piergallini and Hugo Zhou for helpful conversations and Irving Dai for telling him about \cite{Guth2024Exotic} and helpful discussions.  This project was mostly carried out during the author's stay at Max Planck Institute for Mathematics in Bonn and the author is thankful for  its hospitality and financial support.

\section{Notation conventions}\label{Sec:Notation}

When not specified differently, the 	Floer homology groups have $\F:= \Z/2$ coefficients.
We will use the following shorthand notation to denote the towers in monopole Floer homology: $\hat \cT := \F[[U]]$,
$\bar \cT := \F[[U, U^{-1}]$, $\check \cT := \F[U,U^{-1}]/U\F[U,U^{-1}]$. These $\F[U]$-modules are graded with 
$1$ in degree $0$ and $U\cdot$ being an homomorphism of degree $-2$.
 When working with Heegaard-Floer homology, we will use the notation $\cT^- =\hat \cT, \cT^+= \check \cT$ and $\cT^\infty = \bar \cT$.
Notice that  we will use the completed Heegaard-Floer homology groups $\HF^-, \HF^\infty, \HF^+$  \cite{ManolescuOzsvath} in order to use the Floer exact triangle for the variants $-,\infty$.

Let $R$ be a $\Q$-graded ring,  with $1\in R$, and $d\in \Q$. Then $R_{(d)}$ will denote the same module but  with gradings shifted so that the unit has degree $d$. 

If   $x_1, \dots, x_n $, $n\in \N$ are formal variables,  $R_{(d)}\langle x_1,\dots, x_n\rangle$ will denote $ (R_{(d)})^n$ and we will represent  the element $(r_1,\dots, r_n) \in R_{(d)}^n$ as $\sum_{i=1}^n r_i x_i$, $r_i \in R_{(d)}$. When $n=1$, we will drop the parenthesis and write, for example, $R_{(d)}x_1$.

If $W$ is a $4$-manifold with boundary, we denote by  $W\setminus \ball$ the manifold obtained by removing a ball from $W$. We can interpret it as a cobordism $\SS^3\to \partial W$, then the relative Seiberg-Witten invariant
of $W$ associated with $\sstruc \in Spin^c(W)$, is given by $\HMfrom(W\setminus \ball, \sstruc)(\hat 1) \in \HMfrom_\bullet(\partial W)$
where $\hat 1 \in \HMfrom_\bullet(\SS^3)$ is the standard generator.

For $a,b,c \in \Z\cup \{\infty\}$, define the $3$-manifold $Y_{a,b,c}$, via the surgery presentation showed in \autoref{Pic:Yabc}(right).
Notice that $Y \simeq Y_{0,\infty, 0}$.

\section{Computing $\HMfrom_\bullet(Y)$.}\label{Sec:ComputingHMfromY}
We begin with the following lemma.
\begin{lemma}\label{lemma:TorsionSpincStructures}
	$\HMfrom_\bullet(Y_{a,b,c}, \sstruc) = (0)$  for every $\sstruc \in Spin^c(Y_{a,b,c})$ with $c_1(\sstruc)$ non-torsion.
\end{lemma}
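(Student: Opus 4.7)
The plan is to combine the Kronheimer--Mrowka adjunction inequality for monopole Floer homology with the construction of embedded $2$-spheres in $Y_{a,b,c}$ coming from the surgery presentation in \autoref{Pic:Yabc}(right).

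First I would identify, for each $0$-framed component $K_i$ of the surgery link, an embedded closed surface $\Sigma_i \subset Y_{a,b,c}$ built from a Seifert surface for $K_i$ (in the complement of the other surgery solid tori) capped off by the meridian disk of the glued-in solid torus at $K_i$. By choosing Seifert disks for the unknotted components and tubing through meridian disks of the other $0$-framed components to absorb geometric intersection points, each $\Sigma_i$ can be arranged to be an embedded $2$-sphere. A Mayer--Vietoris or linking-matrix computation then shows that the classes $[\Sigma_i]$ generate $H_2(Y_{a,b,c};\Z)$ modulo torsion.

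Next I would invoke the adjunction inequality of \cite{KM}: if $\HMfrom_\bullet(Y_{a,b,c}, \sstruc) \neq 0$, then for every embedded closed oriented surface $\Sigma \subset Y_{a,b,c}$ with $[\Sigma] \neq 0$,
\[
|\langle c_1(\sstruc), [\Sigma]\rangle| \leq \max(0, 2g(\Sigma)-2).
\]
Applied to $\Sigma = \Sigma_i$, a $2$-sphere with non-trivial class, this forces $\langle c_1(\sstruc), [\Sigma_i]\rangle = 0$. Since the $[\Sigma_i]$ span $H_2(Y_{a,b,c};\Z)\otimes \Q$, $c_1(\sstruc)$ must be torsion, contradicting the hypothesis. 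Thus $\HMfrom_\bullet(Y_{a,b,c}, \sstruc) = 0$.

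The main obstacle is verifying the spheres-and-spanning claim for all triples $(a,b,c)$, not just those with coefficients equal to $0$ or $\infty$. When a component has non-zero finite framing, the natural capping construction gives a surface of positive genus rather than a sphere, and the adjunction inequality only bounds, but does not annihilate, $\langle c_1(\sstruc), [\Sigma_i]\rangle$. In such cases one may need a supplementary argument --- perhaps a surgery exact triangle reducing to the $0$- or $\infty$-framed cases, or an equivariant analysis exploiting the symmetries of the link --- to rule out the finitely many non-torsion $\sstruc$ that survive the bound.
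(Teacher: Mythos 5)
Your plan is the right one in outline (find a surface geometrically dual to the meridian of a $0$-framed component and apply the vanishing theorem of \cite{KM}), but the central claim --- that the dual surfaces can be arranged to be embedded $2$-spheres --- does not hold, and the mechanism you propose for it is not available. The meridian disk of a glued-in solid torus caps the \emph{surgery slope} (for a $0$-framed component, the longitude), not the meridian; so small circles around the intersection points of a Seifert disk for $K_i$ with another surgered component $K_j$ do \emph{not} bound disks in the filling torus at $K_j$ unless that coefficient is $\infty$. The only general move for removing such intersection points is to tube pairs of them together along $K_j$, and each tube raises the genus. For the links defining $Y_{a,b,c}$ this produces a genus-one surface, not a sphere, and indeed the relevant class need not be sphere-representable at all (these are irreducible graph manifolds). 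Note also that the vanishing result you want to quote, \cite[Cor.\ 40.1.2]{KM}, is an adjunction-type statement for surfaces of genus $\geq 1$; the genus-zero case would require a separate argument even if the sphere existed.

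The repair is exactly what the paper does, and it is simpler than what you attempted: a torus already suffices. If $c_1(\sstruc)$ is non-torsion then, since $H_1(Y_{a,b,c})\simeq \Z/a\oplus\Z/b\oplus\Z/c$ is generated by the meridians, at least one coefficient (say $a$) must be $0$ with the corresponding component $\alpha$ of $PD(c_1(\sstruc))$ non-zero --- so your closing worry about components with non-zero finite framing is moot, as those meridians are torsion and need no dual surface. For the $0$-framed component one takes a genus-one surface in the link complement bounded by the $0$-framed longitude and caps it with the filling disk, obtaining a torus $\Sigma_a$ geometrically dual to $\mu_a$. Since $2g(\Sigma_a)-2=0$, the condition $|\langle c_1(\sstruc),[\Sigma_a]\rangle|=|\alpha|>0$ already violates the adjunction bound, and \cite[Cor.\ 40.1.2]{KM} gives $\HMfrom_\bullet(Y_{a,b,c},\sstruc)=0$. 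In short: keep your strategy, drop the insistence on spheres, and use the genus-one dual surface together with the genus $\geq 1$ vanishing theorem.
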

\begin{proof}
		From the surgery presentation we obtain that $H_1(Y_{a,b,c})\simeq \frac{\Z}{a\Z} \mu_a\oplus \frac{\Z}{b\Z} \mu_b \oplus \frac{\Z}{c\Z} \mu_c$ 
		where $\mu_a$, $\mu_b$, $\mu_c$ are meridians to the blue unknots in \autoref{Pic:Yabc}(right).
		The Poincar\'e dual of $c_1(\sstruc)$ is  $PD(c_1(\sstruc)) = \alpha \mu_a + \beta \mu_b + \kappa \mu_c$ for some $(\alpha, \beta, \kappa) \in \Z/a\oplus \Z/b\oplus \Z/c$. 
		
		If $c_1(\sstruc)$ is non-torsion, then at least one of $a,b,c$ must be $0$ and the corresponding coefficient $\alpha, \beta, \kappa$
		is non-zero. We deal with the case when $a=0$, the other cases are similar.
		
		From Figure~\ref{Fig:Torus_a} we see that there is a torus $\Sigma_a$, geometric dual to $\mu_a$.
		Now since 
		\begin{equation}
			 |\alpha| = |\langle c_1(\sstruc), \Sigma_a\rangle | > 2 g(\Sigma_a) -2 = 0,
		\end{equation}
		\cite[Cor. 40.1.2]{KM} implies the thesis.\end{proof}
\begin{figure}
\begin{center}
\includegraphics[scale=0.3]{./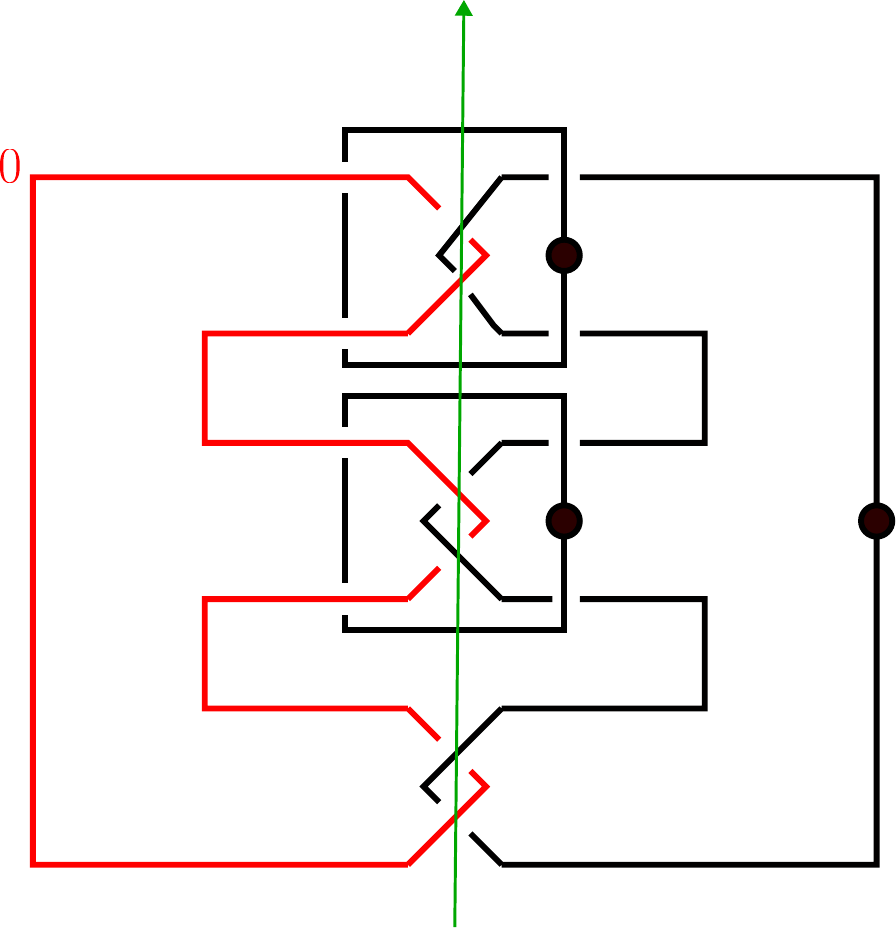} \quad \quad \includegraphics[scale=0.3]{./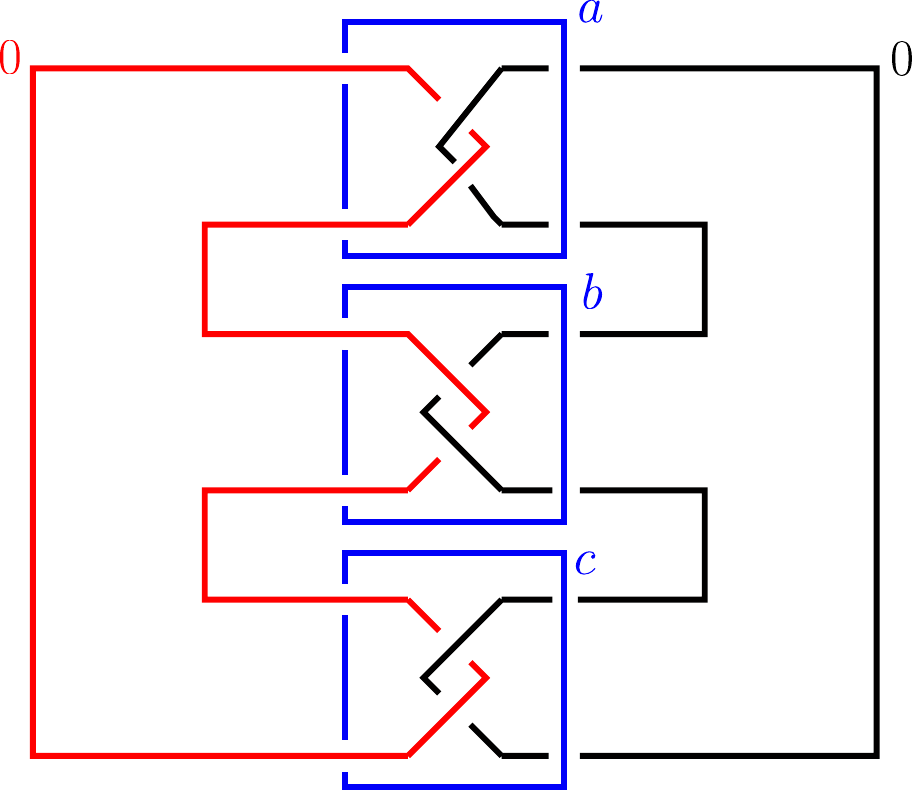}
\end{center}
\caption{\label{Pic:P_0} Left: Kirby diagram for the protocork $P_0$, $Y= \partial P_0$, the involution $\tau:Y\to Y$ is induced by a rotation by $\pi$ in the vertical axis.
\label{Pic:Yabc} Right: surgery presentation of the $3$-manifold $Y_{a,b,c}$. Notice that $Y\simeq Y_{0,0, \infty}\simeq Y_{0,\infty,0}\simeq Y_{\infty, 0,0}$.}
\end{figure}

\begin{figure}
\begin{center}
 \includegraphics[scale=0.3]{./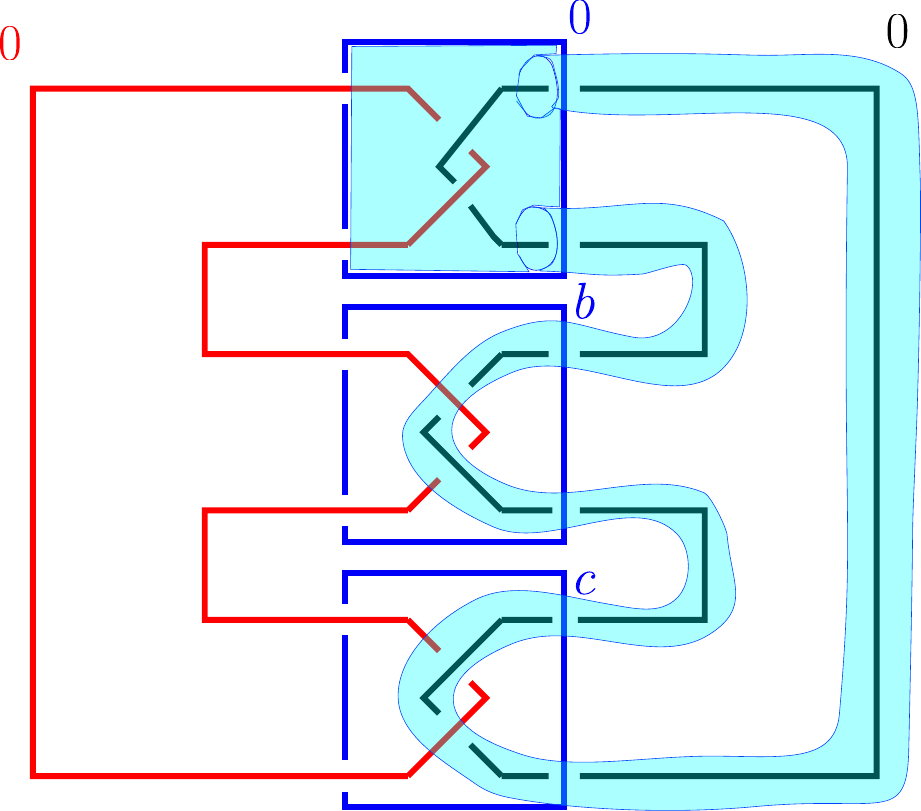}
\end{center}
\caption{\label{Fig:Torus_a} The torus $\Sigma_\alpha$ is obtained by capping the surface of genus one in blue with the disk coming 
from the Dehn filling of the $0$-framed unknot.}
\end{figure}

By \cite[Thm. 1.4]{Ladu}, $\HMfrom_{\bullet}(Y) \simeq \hat \cT_{(-3)}\oplus  (\hat \cT)_{(-2)}^2 \oplus  \hat \cT_{(-1)}\oplus \HMred(Y)$ thus the only thing we need to compute is the reduced homology.

\begin{proposition}\label{prop:determinationUpToDegShift}$\HMred(Y)\simeq \HMred(Y_{-1,\infty, -1})[-2]$.
\end{proposition}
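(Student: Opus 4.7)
The plan is to pass from $Y = Y_{0,\infty,0}$ to $Y_{-1,\infty,-1}$ via two applications of the monopole Floer surgery exact triangle, changing one $0$-framing at a time. First I would view the first $0$-framed blue unknot as a framed knot $K_1$ sitting in the ambient manifold obtained by omitting its surgery; the surgery triangle then provides a long exact sequence
\begin{equation*}
\cdots \to \HMfrom_*(Y_{\infty,\infty,0}) \to \HMfrom_*(Y_{0,\infty,0}) \to \HMfrom_{*-\delta_1}(Y_{-1,\infty,0}) \to \cdots
\end{equation*}
of $\F[U]$-modules. Applying the same construction on the third component of the link in $Y_{-1,\infty,0}$ gives an analogous triangle relating $\HMfrom_*(Y_{-1,\infty,\infty})$, $\HMfrom_*(Y_{-1,\infty,0})$ and $\HMfrom_{*-\delta_2}(Y_{-1,\infty,-1})$.

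The point is that both $\infty$-vertices are completely understood. Setting a framing to $\infty$ amounts to removing that component from the Kirby diagram, and since the remaining blue component is itself an unknot in $S^3$, one reads off $Y_{\infty,\infty,0}\simeq 0\text{-surgery on an unknot} \simeq S^1\times S^2$ and $Y_{-1,\infty,\infty}\simeq S^3$. Both of these manifolds have $\HMred = 0$; concretely, $\HMfrom(S^1\times S^2)$ and $\HMfrom(S^3)$ are sums of $U$-towers and contain no $U$-torsion.

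With this input, the two triangles imply, by a diagram chase on $U$-torsion parts, isomorphisms
\begin{equation*}
\HMred(Y) \;\cong\; \HMred(Y_{-1,\infty,0})[-\delta_1] \;\cong\; \HMred(Y_{-1,\infty,-1})[-(\delta_1+\delta_2)],
\end{equation*}
and it remains to check that $\delta_1+\delta_2 = 2$. Each $\delta_i$ is the degree of the cobordism map induced by the $2$-handle attached along a blue unknot when we pass from framing $0$ to framing $-1$, and in each case this $2$-handle cobordism, equipped with its canonical $\spinc$ structure, contributes a grading shift of $1$ via the standard $\tfrac{1}{4}(c_1^2-2\chi-3\sigma)$ formula.

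The main obstacle will be making the diagram chase on $U$-torsion parts rigorous: a priori, taking reduced Floer homology is not an exact functor, so the vanishing of $\HMred$ at the $\infty$-vertices does not formally yield a short exact sequence of reduced parts. To handle this I would combine \autoref{lemma:TorsionSpincStructures} (reducing to the torsion $\spinc$ structure) with the explicit description of the maps from $\overline{HM}$ into the towers on the $S^1\times S^2$ and $S^3$ sides, showing that both cobordism maps involving the $\infty$-vertices take values entirely in the non-reduced (tower) part. Once this is verified, the long exact sequence does restrict to an isomorphism of reduced homologies in each step, and the composed shift gives the claimed $[-2]$.
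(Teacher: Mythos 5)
Your overall route coincides with the paper's: the same two surgery triangles through $S^1\times S^2$, $Y_{-1,\infty,0}$ and $S^3$, followed by a degree count for the two $2$-handle cobordisms. However, there are two genuine gaps. The first is the step you yourself flag: exactness of the triangles plus the vanishing of $\HMred$ at the $\infty$-vertices does \emph{not} by itself force the $U$-torsion parts of the other two vertices to agree. For instance, there is an exact triangle of $\F[[U]]$-modules of exactly this shape, $\hat\cT x\oplus\F m\to\hat\cT^2\to\hat\cT\to\cdots$ with connecting map sending $1\mapsto m$, in which the torsion parts of consecutive terms differ; so some input beyond exactness is unavoidable. Your proposed criterion --- that the connecting maps out of the $S^3$ and $S^1\times S^2$ vertices take values in the tower part (equivalently vanish, their images being $U$-torsion) --- would indeed suffice, but you offer no argument for it, and establishing it is precisely the nontrivial content. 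The paper does this by comparing the $\HMfrom$-triangle with the $\HMbar$-triangle (a rank count gives $\bar f_3=0$, $\bar f_1$ injective, $\bar f_2$ surjective, Lemma~\ref{lem:BarF_3Zero}) and then imposing the grading hypotheses \textbf{AG} on the towers and on the degrees of the maps (Lemmas~\ref{lemma_algebraicCaseb0} and~\ref{lemma_algebraicCaseb1}); verifying \textbf{AG} in turn uses that $Y_{-1,\infty,-1}$ bounds a contractible $4$-manifold and an argument as in \cite[Thm.~1.4]{Ladu} for $Y_{-1,\infty,0}$. Merely invoking Lemma~\ref{lemma:TorsionSpincStructures} and the image of $\HMbar$ does not yet deliver this.

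The second gap is the grading shift. Both cobordisms $Y_{-1,\infty,-1}\to Y_{-1,\infty,0}\to Y_{0,\infty,0}$ are $2$-handles attached along null-homologous knots with the Seifert framing, so $c_1(\sstruc)^2=0$, $\chi=1$, $\sigma=0$; the formula you cite, $\tfrac14(c_1^2-2\chi-3\sigma)$, then gives $-\tfrac12$ per handle, i.e.\ a total shift of $1$, not the $2$ you assert. In the monopole conventions used here each such cobordism map has degree $-1$, because the change of $b_1$ across the cobordism enters the degree formula \cite[eq.~28.3]{KM} (compare: a $2$-handle along a non-null-homologous knot gives degree $0$, as used later in the paper); this is how the paper obtains $[-2]$. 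So your asserted total is right, but the justification you give would literally output $[-1]$, and since this shift is the only graded content of the proposition beyond the ungraded isomorphism, the bookkeeping must be redone with the correct formula (one should also note that the triangle maps sum over \spinc structures on the cobordism; here this is harmless since every relevant \spinc structure has $c_1^2=0$).
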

\begin{proof}  Note that $Y \simeq Y_{0,\infty,0}$.
The sequence of surgeries shown in Figure~\ref{Pic:SurgeriesSequences} induces two knot exact triangles in $\HMfrom_\bullet$ and $\HMbar$.
We have that $Y_{\infty, \infty,0}\simeq \SS^1\times \SS^2$, $Y_{-1, \infty, \infty}\simeq \SS^3$, $Y_{-1,\infty, -1}$ is an homology sphere  and, ignoring the grading, the Floer homology of the other manifolds are given by:
\begin{align*}
	\HMfrom_\bullet(Y_{0,\infty, 0})\simeq (\hat\cT)^4\oplus \HMred(Y_{0,\infty, 0}) & & \HMfrom_\bullet (Y_{-1,\infty, 0})\simeq (\hat \cT)^2\oplus \HMred(Y_{-1,\infty, 0}),
\end{align*}
indeed they are both have vanishing triple cup product.
The vertical sequence of Figure~\ref{Pic:SurgeriesSequences}  gives rise to  a commutative diagram

\begin{tikzcd}[row sep=small, column sep = small]
(\hat\cT)^2\oplus \HMred(Y_{-1,\infty, 0})\arrow[r]\arrow[d,"\simeq"] & \hat\cT  \arrow[r]\arrow[d,"\simeq"]& \hat \cT\oplus \HMred(Y_{-1,\infty, -1})  \arrow[r]\arrow[d,"\simeq"]& \cdots\\
	\HMfrom_\bullet(Y_{-1,\infty,0})\arrow[r] \arrow[d] & \HMfrom_\bullet(\SS^3) \arrow[r] \arrow[d]& \HMfrom_\bullet(Y_{-1,\infty,-1}) \arrow[r] \arrow[d]& \cdots\\
		\HMbar(Y_{-1,\infty, 0}) \arrow[r] \arrow[d,"\simeq"] & \HMbar(\SS^3) \arrow[r] \arrow[d,"\simeq"] &  \HMbar(Y_{-1,\infty, -1}) \arrow[r] \arrow[d,"\simeq"]  &\cdots \\
(\bar\cT)^2\arrow[r] & \bar\cT   \arrow[r]& \bar\cT \arrow[r] & \cdots.\\
\end{tikzcd}

Now an application of Lemma~\ref{lemma_algebraicCaseb0} below proves that ignoring the grading
\begin{equation}\label{eq:isoWithoutGradingPropIniziale}
	\HMred(Y_{-1,\infty,-1})\simeq \HMred(Y_{-1,\infty,0}).
\end{equation}
 Lemma~\ref{lemma_algebraicCaseb1}  applied to the horizontal sequence gives $\HMred(Y_{-1,\infty,0})\simeq \HMred(Y_{0,\infty, 0})$.
Notice that in order to apply the Lemmas in the Appendix, we need to check that the grading assumptions \textbf{AG} are satisfied (see Definition~\ref{ass:AG}). This is straightforward for $Y_{-1,\infty, -1}$ since it bounds a contractible $4$-manifold and in the case of $Y_{-1,\infty,0}$
we can run an argument as in \cite[Thm. 1.4]{Ladu}.

 Now the isomorphism $\HMred(Y_{0,\infty, 0})\simeq \HMred(Y_{-1,\infty,-1})$
 is induced by two cobordisms $C_1: Y_{-1,\infty, -1}\to Y_{-1,\infty, 0}$ and $C_2:Y_{-1,\infty, 0}\to Y_{0,\infty, 0}$ 	obtained by attaching $2$-handles along null-homologous knots equipped with the Seifert-framing.
	Consequently both the cobordism maps have degree $-1$ \cite[eq. 28.3]{KM} and therefore
	\begin{equation}
		\HMred(Y)\simeq  \HMred(Y_{-1,\infty, -1})[-2],
	\end{equation}
	as graded $\F[U]$-modules.
  \end{proof}

\begin{figure}
\begin{center}
\includegraphics[scale=0.6]{./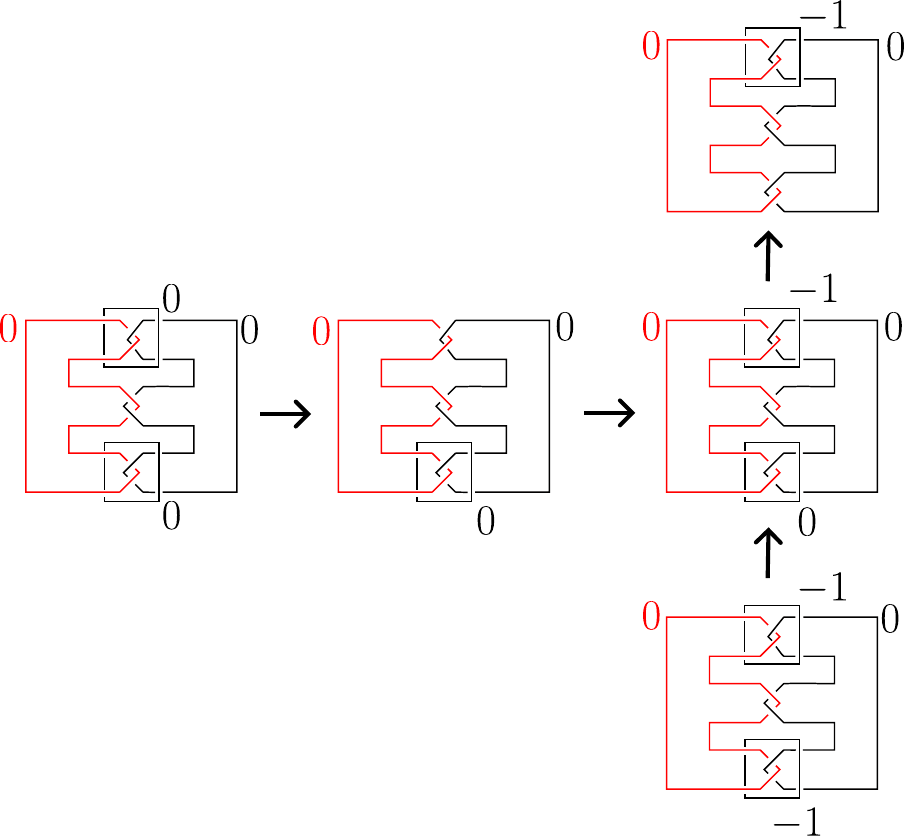}
\end{center}
\caption{\label{Pic:SurgeriesSequences} The horizontal sequence is the sequence of manifolds appearing in the  long exact triangle sequence induced by surgery along  the $a$-framed curve: $Y_{0,\infty, 0}\to Y_{\infty, \infty, 0}\simeq \SS^1\times \SS^2\to Y_{-1,\infty, 0}\to\cdots$. The vertical sequence is induced by surgery along the  $c$-framed curve: $ Y_{-1, \infty, -1}\to Y_{-1,\infty, 0} \to Y_{-1,\infty, \infty}\simeq \SS^3  \to \cdots$ }
\end{figure}

The manifold $Y_{-1,\infty, -1}$ is the mirror of the first positron cork \cite{AkbulutMatveyevConvex}.
The Heegaard-Floer homology of $Y_{-1,\infty,-1}$ is computed   with $\F$-coefficients by Guth \cite{Guth2024Exotic}, who actually computes the Floer homology of $-Y_{-1,\infty,-1}$ in \cite[Figure 4.2]{Guth2024Exotic} and also by Hales in \cite[pg. 73]{HalesThesis} where however the grading is not correct.

\begin{proposition}\label{prop:HFred-1Infty-1_F_coefficients}
		$\HFred(Y_{-1,\infty,-1};\F) \simeq \F^2_{(1)}\oplus \F^2_{(0)}$.
\end{proposition}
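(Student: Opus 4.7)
The plan is to invoke Guth's computation \cite[Fig. 4.2]{Guth2024Exotic} of the Heegaard-Floer homology of the mirror $-Y_{-1,\infty,-1}$ and transport it across orientation reversal to $Y_{-1,\infty,-1}$, with the main work being to pin down the absolute grading, since Hales' earlier computation \cite[pg. 73]{HalesThesis} already gets the same underlying module but with the wrong grading.

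First I would extract from \cite[Fig. 4.2]{Guth2024Exotic} the ungraded $\F[U]$-module structure of $\HFred(-Y_{-1,\infty,-1};\F)$, namely a $4$-dimensional $\F$-vector space concentrated in two consecutive gradings (two generators in each) with trivial $U$-action. Combined with the fact that $-Y_{-1,\infty,-1}$ bounds a contractible $4$-manifold (the first positron cork), so its correction term vanishes, Guth's diagram is absolutely graded; the reduced part sits in the two gradings just above the tower.

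Second I would apply the orientation-reversal duality for $\HFred$ of an integer homology sphere with field coefficients. Concretely, the duality pairing between $\HF^+(Y;\F)$ and $\HF^-(-Y;\F)$ induces, via the exact triangles relating $\HF^\pm$ to $\HF^\infty$, an isomorphism of graded $\F$-vector spaces between $\HFred(Y_{-1,\infty,-1};\F)$ and a degree-reversed, constant-shifted copy of $\HFred(-Y_{-1,\infty,-1};\F)$. Coupled with $d(Y_{-1,\infty,-1})=0$ (again from bounding a contractible $4$-manifold) this determines the absolute grading and yields $\F^2_{(1)}\oplus \F^2_{(0)}$.

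The delicate part, and the main obstacle, is the grading bookkeeping: keeping the $\HF^+/\HF^-$ conventions used by Guth aligned with the $\hat\cT$-normalization in this paper and tracking the universal shift introduced by orientation reversal on the reduced quotient. A clean internal consistency check is available: together with Proposition~\ref{prop:determinationUpToDegShift}, the answer $\F^2_{(1)}\oplus \F^2_{(0)}$ must, after the $[-2]$ shift, reproduce the reduced generators $\Delta,\alpha_1$ in degree $-1$ and $\beta_1,\beta_2$ in degree $-2$ asserted in Theorem~\ref{thm:HMYTauAction}. That the gradings match under this shift cross-validates the bookkeeping and distinguishes the correct grading from the one in \cite{HalesThesis}.
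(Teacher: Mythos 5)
Your overall route is the paper's: reduce to the mirror $-Y_{-1,\infty,-1}$ by duality (\cite[Prop.~28.3.4]{KM} together with \cite{HM=HF_1}) and quote Guth's Figure 4.2; the paper's only extra ingredient is that this figure is the large surgery complex of a genus-one knot $J$ with $-Y_{-1,\infty,-1}=\SS^3_{+1}(J)$, so the large surgery theorem is what turns it into $HF^-(-Y_{-1,\infty,-1})$ --- a step you elide, but a minor one. The genuine gap is the grading transfer, which you yourself call the main obstacle and then do not carry out; worse, the input you assert is incompatible with the output you claim. If the reduced part of $-Y_{-1,\infty,-1}$ really sat in the two degrees just above (the top of) its minus-type tower, then the duality you invoke --- which for an integer homology sphere $M$ reads $HF^+_{\mathrm{red},d}(M)\cong HF^+_{\mathrm{red},-d-1}(-M)$, equivalently $\HMred_d(M)\cong \HMred_{-d-1}(-M)$ in this paper's normalization --- together with $d(\pm Y_{-1,\infty,-1})=0$ would place $\HFred(Y_{-1,\infty,-1})$ at the tower-top degree and one below, i.e.\ it would give $\F^2_{(-1)}\oplus\F^2_{(-2)}$ in the normalization of the statement, not $\F^2_{(1)}\oplus\F^2_{(0)}$. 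The correct picture is the reverse: the mirror's reduced part lies at its tower-top degree and one degree below, and it is $Y_{-1,\infty,-1}$ itself whose reduced part lies just above its tower. As sketched, your recipe reproduces precisely the kind of grading slip you attribute to \cite{HalesThesis}.

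The proposed consistency check cannot repair this, because it is circular: the degrees of $\Delta,\alpha_1$ and $\beta_1,\beta_2$ in Theorem~\ref{thm:HMYTauAction} are obtained by combining Proposition~\ref{prop:determinationUpToDegShift} with the present proposition, so they provide no independent confirmation of its grading and cannot ``distinguish the correct grading from the one in \cite{HalesThesis}.'' To close the gap you must actually do the bookkeeping: read the absolutely graded $HF^-(\SS^3_{+1}(J))$ off Guth's complex (this is where the genus-one/large-surgery remark is needed), record that its reduced part occupies the tower-top degree and the degree below it, and then apply the duality with the explicit shift above; with that corrected input, $\F^2_{(1)}\oplus\F^2_{(0)}$ does follow.
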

\begin{proof}  It is enough to compute $HF^{red}(-Y_{-1,\infty,-1};\F)$ in virtue of \cite[Prop. 28.3.4]{KM} and \cite{HM=HF_1}.
	$-(Y_{-1,\infty,-1})$ is the boundary of the first positron cork \cite{AkbulutMatveyevConvex}, this is also $+1$-surgery
	on a knot $J\subset \SS^3$ \cite[remark 7.8]{DaiMallickStoffregen}. The large surgery complex for $J$ is shown in 
	\cite[Figure 4.2]{Guth2024Exotic}, and since $J$ has genus $1$ this computes $HF^-(\SS^3_{+1}(J))$.
\end{proof}

Putting Proposition~\ref{prop:determinationUpToDegShift} and Proposition~\ref{prop:HFred-1Infty-1_F_coefficients} together  we conclude the proof that
	\begin{equation}\label{eq:HMfromY}
			\HMfrom_\bullet(Y) \simeq \hat \cT_{(-3)}\oplus  (\hat \cT)_{(-2)}^2 \oplus  \hat \cT_{(-1)}\oplus \F^2_{(-1)}\oplus \F^2_{(-2)}.
	\end{equation}

With a bit more work	we can say something about $\HMred(Y;\Z)$.
\begin{proposition}\label{prop:refinement} 
	$\HMred(Y;\Z) \simeq \Z_{(-1)}\oplus \Z/2^{k_1}\Z_{(-1)} \oplus  \Z_{(-2)}\oplus \Z/2^{k_2}\Z_{(-2)}  \oplus T^{odd}$
	where $T^{odd}$ is a finitely generated $\Z$-module consisting of odd torsion elements, 
	$k_1,k_2\in \{\N_{>0}, \infty\}$ (where $\Z/2^\infty \Z:=\Z$), and  $k_1 = \infty$  if and only if  $k_2 = \infty$.
	Furthermore, the generator of $\Z_{(-1)}$ is an integral lift of $\Delta$.
		
\end{proposition}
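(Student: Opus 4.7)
The plan is to rerun the computation of Section~\ref{Sec:ComputingHMfromY} with $\Z$-coefficients and then identify the generator of the free summand in degree $-1$ explicitly. The surgery exact triangles used in Proposition~\ref{prop:determinationUpToDegShift} hold over $\Z$ (the only reason for working over $\F$ in the main text is to avoid tracking signs in the triangle), and the appendix lemmas are stated for a general PID. So the same argument gives
\begin{equation*}
\HMred(Y;\Z)\simeq \HMred(Y_{-1,\infty,-1};\Z)[-2]
\end{equation*}
as graded $\Z[U]$-modules.

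The main computational step is then to compute $\HMred(Y_{-1,\infty,-1};\Z)$ by running the mapping cone formula for the $(1,1)$-knot $J$ of \cite[Figure~4.2]{Guth2024Exotic} integrally, following the setup of section~\ref{sec:Computation-11Infty}. The differentials of the $\F$-coefficient surgery complex lift to $\Z$ up to sign apart from a small number of matrix entries whose orders encode the two torsion parameters $k_1, k_2 \in \N_{>0}\cup\{\infty\}$. A direct analysis of the cone shows that its $2$-primary homology has the asserted shape, with the remaining torsion collected into $T^{\mathrm{odd}}$. The coupling $k_1 = \infty \iff k_2 = \infty$ follows from the fact that the two potential $\Z/2^k$ contributions come from a single matrix entry in the mapping cone, appearing as a cokernel in one degree and a shifted kernel in the adjacent one, so they either vanish simultaneously (giving $k_1 = k_2 = \infty$) or appear with the same order. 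This coupling is the main obstacle, since everything else reduces to checking that the integer differentials behave as predicted by a direct linear-algebra calculation.

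Finally, to identify the generator of $\Z_{(-1)}$, set $\tilde x_0 := \HMfrom(P_0\setminus\ball;\Z)(\hat 1) \in \HMfrom_{-1}(Y;\Z)$ and $\tilde\Delta := \tau_*(\tilde x_0) - \tilde x_0$. By naturality of the cobordism and involution maps, the mod-$2$ reduction of $\tilde\Delta$ equals $\tau_*(x_0) - x_0 = \Delta$, which is nonzero in $\HMred_{-1}(Y;\F)$ by Theorem~\ref{thm:HMYTauAction}. Hence $\tilde\Delta$ is nonzero in $\HMred_{-1}(Y;\Z)$ (it is $U$-torsion because $\Delta$ is), and the integer mapping cone calculation pins it down as an element of infinite order, so it generates the free $\Z_{(-1)}$ summand and completes the proof.
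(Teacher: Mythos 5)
Your plan diverges from the paper's proof and, as written, has genuine gaps. The first is the input you take for granted: rerunning section~\ref{Sec:ComputingHMfromY} over $\Z$ requires the monopole surgery triangle with $\Z$-coefficients, but the triangle invoked there is the one proved over $\F$ in \cite{MonopolesAndLensSpaceSurgeries} (the paper says it works over $\F$ precisely to match the version actually proved in monopole Floer homology, not merely to avoid sign bookkeeping), and it explicitly declines to assume the $\Z$-coefficient knot surgery sequence of \cite[Thm. 3.1]{OzsvathSzaboMappingCone}; likewise the surgery complex for $J$ in \cite{Guth2024Exotic} is an $\F$-computation, and your route would additionally need an identification of $\HMred$ with $\HFred$ over $\Z$. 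Even granting all of this, the heart of the proposition --- the shape of the $2$-primary part and above all the coupling $k_1=\infty \iff k_2=\infty$ --- is asserted (``a direct analysis of the cone shows\dots'', ``a single matrix entry'') rather than derived: nothing in your sketch explains why both torsion parameters should be governed by one integral entry, so the main claim is not actually proved. Finally, the last step is a non sequitur: an infinite-order class with nonzero mod-$2$ reduction need not generate a free summand (take $(2,1)\in\Z\oplus\Z/2$: it has infinite order and nonzero reduction, yet $\Z(2,1)$ is not a summand since the quotient is $\Z/4$), and an abstract mapping-cone computation does not by itself locate the cobordism-defined class $\tilde\Delta$ inside the answer.

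The paper's actual proof is much softer and avoids every integral triangle. Since \eqref{eq:variationSWinvK3CP2} holds over $\Z$ (the integral invariant of $K3\#\bar\CP^2$ is $\pm 1$ while that of $3\CP^2\#20\bar\CP^2$ vanishes), the map $\HMarrow(M,\sstruc_M)$ carries $\tilde\Delta=\tau_*(\tilde x_0)-\tilde x_0$ onto a generator of an infinite cyclic group; this shows at once that $\tilde\Delta$ has infinite order \emph{and} that $\Z\tilde\Delta$ splits off as a direct summand of $\HMred_{-1}(Y;\Z)$, which is exactly what your mod-$2$ argument cannot give. Then the orientation-reversing self-diffeomorphism of $Y$ together with Poincar\'e duality yields $\HMred_{j}(Y;\Q)\simeq\HMred_{-j-3}(Y;\Q)$, so the $\Q$-ranks in degrees $-1$ and $-2$ agree --- this is precisely the coupling $k_1=\infty\iff k_2=\infty$ and also produces the $\Z_{(-2)}$ summand --- while the universal coefficient theorem against \eqref{eq:HMfromY} caps each degree at one further cyclic $2$-group plus odd torsion. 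Your computational route could in principle say more (the actual values of $k_1,k_2$), but to make it a proof you would have to justify the integral surgery inputs and actually exhibit the integral cone computation and the location of $\tilde\Delta$ in it, rather than assert their outcomes.
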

\begin{proof}  The fact that $\Z<\HMred_{-1}(Y;\Z)$ follows from the fact that the protocork twist operation using $P_0$ 
is able to change the integral Seiberg-Witten invariants, indeed it relates $K_3\#\bar\CP^2$ and $3\CP^2\#20\bar\CP^2$ (see the proof of Proposition~\ref{prop:ActOfTauOnHM-1}). Thus over $\Z$, $\tau_* (x_0) = x_0 + \tilde{\Delta}$,
with $\tilde{\Delta} \neq 0 \in \HMred_{-1}(Y;\Z)$ an indivisible element because \eqref{eq:variationSWinvK3CP2} holds also over $\Z$. In particular $\tilde{\Delta} = \Delta \mod 2 \in \HMred_{-1}(Y;\F)$.

$Y$ possess an orientation reversing diffeomorphism \cite[Sec. 2.5]{Ladu}. Therefore 
from Poincar\'e duality \cite[28.3.4]{KM} we infer that $\HMred_{j}(Y;\Q)\simeq\HMred_{-j-3}(Y;\Q)$ for any $j$.
Now the claim follows from the universal coefficient theorem and \eqref{eq:HMfromY}.
\end{proof}

\section{Action of the involution on $\HMfrom_\bullet(Y)$.}\label{Sec:ActionInvolution}

In \cite[Sec.2.5]{Ladu} we define an orientation preserving involution  $\tau$ of $Y$. $\tau$ can also be described
via the surgery presentation of \autoref{Pic:P_0} as the involution induced by a $\pi$-rotation around the axis shown in the picture.  In order to study the action of $\tau$ we will make use of equivariant cobordisms.  Given $3$-manifolds $M_0,M_1$ and involutions $f_0,f_1$ we call equivariant cobordism a cobordism
$W:M_0 \to M_1$ endowed with an involution $\hat f: W\to W $ extending $f_0,f_1$. Since we use $\F$-coefficients, the cobordism map $\HMfrom(W)$ will interwine $(f_0)_*$ and $(f_1)_*$ (the same applies to $\HMbar$ or $\HMto_\bullet$).

\begin{figure}
	\begin{center}
		\includegraphics[scale=0.6]{./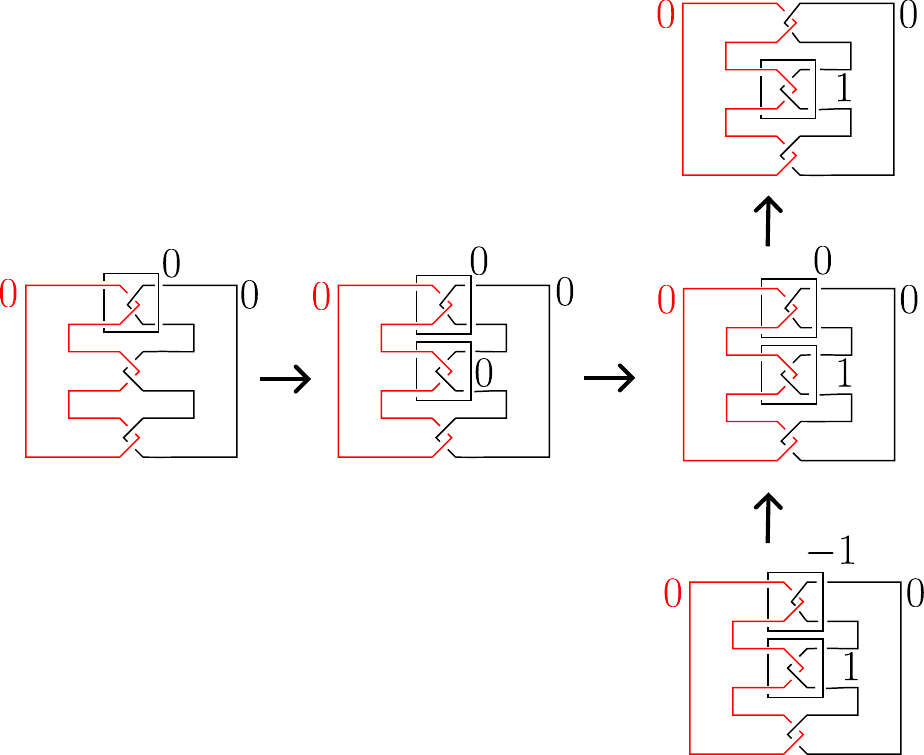}
	\end{center}
	\caption{\label{Pic:SurgerySeq2} The horizontal sequence is   induced by surgery on the meridian of the $b$-framed curve $Y_{0,\infty, \infty}\simeq \SS^1\times \SS^2\to Y_{0,0, \infty}\to Y_{0,1,\infty} \to \cdots$. The vertical sequence is induced by surgery along the  $a$-framed curve: $ Y_{1, \infty, -1}\to Y_{0,1,\infty} \to Y_{\infty,1, \infty} \to \cdots$. Note that  $Y_{\infty,1, \infty} $ is the boundary of the Akbulut cork.}
\end{figure}
	
\paragraph{The key surgery sequences.} The surgery sequences in Figure~\ref{Pic:SurgerySeq2}
induce a commutative diagram of $\F[U]$-homomorphisms:

\begin{equation}\label{eq:TwoLES}
\begin{tikzcd}[row sep=small, column sep = small]
	 &	& 	\HMfrom_{\bullet}(Y_{\infty, 1,\infty}; \F) \arrow[bend left=80,swap]{dd} \\
\HMfrom_{\bullet}(\SS^1\times \SS^2; \F) \arrow[r] & \HMfrom_{\bullet}(Y_{0,0,\infty};\F) \arrow[r] & \HMfrom_{\bullet}(Y_{0,1,\infty}; \F)\arrow[u] \arrow[bend left=30,swap]{ll} \\
	 &	& 	\HMfrom_{\bullet}(Y_{-1, 1,\infty}; \F \arrow[u] 
\end{tikzcd}
\end{equation}
where  the two triangles are exact\cite[Sec. 42.2]{KM}.

\begin{remark}\label{remarkSeq}The involution $\tau$ extends over the cobordisms inducing the maps in Figure~\ref{Pic:SurgerySeq2} because the $2$-handles are attached equivariantly. Therefore every manifold $Y_{a,b,c}$ appearing in the diagram is endowed with an involution $\tau_{a,b,c}$ and
the maps in the diagram become equivariant.
\end{remark}

The Floer homology of the manifolds appearing in the diagram are as follows: $Y_{\infty, 1, 	\infty}$ is the Akbulut cork \cite{Akbulut91}
hence $\HMfrom_{\bullet}(Y_{\infty,1,\infty}) \simeq \hat \cT_{(-1)}\oplus \F^2_{(-1)}$ \cite{AkbulutDurusoy}.
$Y_{0,0,\infty}\simeq Y$ thus its Floer homology is given by \eqref{eq:HMfromY}.
In section~\ref{sec:Computation-11Infty}, we will compute:
\begin{claim}\label{claim:HFY-11infty}
	 $HF^{red}(Y_{-1,1,\infty})\simeq \F^2_{(-1)}$.
\end{claim}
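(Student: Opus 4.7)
My plan is to realize $Y_{-1,1,\infty}$ as Dehn surgery on a knot $K\subset S^3$ (or, failing that, on a knot in $S^1\times S^2$) and compute $\HFred$ via the Ozsv\'ath--Szab\'o integer surgery mapping cone formula. First I would massage the surgery diagram of Figure~\ref{Pic:Yabc} with $(a,b,c)=(-1,1,\infty)$ by Rolfsen twists and slam-dunk/handle slides, removing the $\infty$-framed component and recasting the two-component link with framings $(-1,1)$ as integral surgery on a single knot $K\subset S^3$. An analogous unknotting strategy is used in the proof of Proposition~\ref{prop:HFred-1Infty-1_F_coefficients}, where $Y_{-1,\infty,-1}$ is exhibited as $+1$-surgery on a knot $J$, so this is a realistic expectation; one then expects $K$ to be either a small genus knot or a knot closely related to $J$.

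Next I would verify that the resulting $K$ is a $(1,1)$-knot, i.e.\ admits a genus-one doubly-pointed Heegaard diagram. Once this is established, I would follow the recipe recalled at the beginning of section~\ref{sec:Computation-11Infty}, based on \cite[Sec.~6.2]{OzsvathSzaboKnotFloer}, to write down the full $CFK^\infty(K)$ complex together with its $(i,j)$-bigrading and Maslov grading. From $CFK^\infty(K)$ one extracts the truncated subquotients $A_s^-$ and $B_s^-$ and the mapping cone $\mathbb{X}^-(K)$ of Ozsv\'ath--Szab\'o \cite{OzsvathSzaboMappingCone}, whose homology computes $\HF^-$ of the appropriate integer surgery on $K$. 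Since $Y_{-1,1,\infty}$ is an integer homology sphere (checking $H_1$ from the linking matrix), only the torsion $\text{spin}^c$-structure contributes, and the mapping cone reduces to a finite complex whose reduced homology is readily computed.

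Finally I would read off $\HFred(Y_{-1,1,\infty})$ from the mapping cone and track the absolute Maslov grading. By \cite{HM=HF_1} this also gives $\HMred$ in the same grading convention (after the usual degree shift). The numerical output I expect is a two-dimensional $\F$-vector space concentrated in degree $-1$, matching $\F^2_{(-1)}$; the $\F[U]$-module structure is forced since everything lies above the bottom of the tower.

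The main obstacle will be twofold: identifying the correct $(1,1)$-knot $K$ (the Kirby calculus for a three-component link is fiddly, and one needs to keep meridians straight to preserve the $\text{spin}^c$-decomposition), and then pinning down the \emph{absolute} Maslov grading of the nontrivial classes in the mapping cone. The latter typically requires either computing $d(Y_{-1,1,\infty})$ via a known bounding four-manifold (e.g.\ the trace of the surgery together with a plumbing) or checking consistency with an orientation-reversing symmetry, as is done in the proof of Proposition~\ref{prop:refinement}. Once the grading is pinned, the isomorphism $\HFred(Y_{-1,1,\infty})\simeq \F^2_{(-1)}$ follows.
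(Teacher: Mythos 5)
Your overall machinery---reduce to an integral surgery on a knot, compute $CFK^\infty$ by the genus-one doubly pointed diagram method of \cite[Sec. 6.2]{OzsvathSzaboKnotFloer}, and feed the result into the mapping cone formula of \cite{OzsvathSzaboMappingCone}---is exactly the machinery of section~\ref{sec:Computation-11Infty}. The reduction step, however, is genuinely different from the paper's, and it is also where your proposal stops short of a proof. The paper never exhibits $Y_{-1,1,\infty}$ itself as surgery on a knot in $\SS^3$: Proposition~\ref{prop:ReductionSurgeryOnKnot} instead places $Y_{-1,1,\infty}$ in a surgery exact triangle with $\SS^3_{-1}(K)$ and $\SS^1\times\SS^2$, where $K=K(1,\sigma)$ is an explicitly identified $(1,1)$-knot in the braid form of section~\ref{ReviewOzsvathSzaboMethod}, transfers the reduced homology across the triangle with Lemma~\ref{lemma_algebraicCaseb0}, and pins the degree of that transfer by an adjunction-inequality argument for the $2$-handle cobordism. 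The point of this detour is precisely to guarantee that the knot whose complex must be computed is a manageable $(1,1)$-knot. Your more direct route is legitimate in its first step: in Figure~\ref{Pic:Yabc} the two surviving components are unknots with linking number zero, so blowing down one $\pm1$-framed component presents $Y_{-1,1,\infty}$ as $\pm1$-surgery on the image of the other, with unchanged framing; this would spare you the algebraic lemma and the degree-preservation argument. But it pays off only if the blown-down knot can actually be put in $(1,1)$ position with a tractable lifted diagram, and you only say you would ``verify'' this.

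That is the genuine gap: as written, nothing quantitative is established. You do not identify the knot, do not produce its $CFK^\infty$, do not compute the homology of the truncated cone, and the expected answer $\F^2_{(-1)}$ is asserted rather than derived; for a claim whose entire content is a computation, those steps \emph{are} the proof. Two smaller remarks. First, the absolute grading is cheaper to pin than you suggest: the paper simply uses that $Y_{-1,1,\infty}$ bounds a contractible $4$-manifold, so the cornerstone of the tower lies in degree $0$, and the reduced classes sit one degree below it by the degree count in the cone; no auxiliary plumbing or $d$-invariant computation is needed. Second, Proposition~\ref{prop:HFred-1Infty-1_F_coefficients}, which you cite as a model for the ``unknotting'' step, succeeds because the relevant knot complex was already available in \cite{Guth2024Exotic}; no such shortcut is known for your blown-down knot, which is exactly why the paper manufactures one via the exact triangle.
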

Hence $\HMfrom_\bullet(Y_{-1,1,\infty}) \simeq \hat \cT_{(-1)}\oplus \F^{2}_{(-1)}$.
Now the exact sequence obtained by considering the horizontal sequence of \eqref{eq:TwoLES} 
implies that  $\HMfrom_{\bullet}(Y_{0,1,\infty}) \simeq \hat \cT_{(-2)}\oplus \hat \cT_{(-1)}\oplus \F_{(-1)}^2\oplus \F_{(-2)}^2$ (using Lemma~\ref{lemma_algebraicCaseb1}).
Notice that the cobordism map $\HMfrom_\bullet(Y_{0,0,\infty})\to \HMfrom_\bullet(Y_{0,1,\infty})$ has degree $0$ because the $2$-handle is attached along a non null-homologous knot.

\paragraph{}

\paragraph{On the involution $\tau_{\infty, 1, \infty}$.} We already observed that  $(Y_{\infty, 1, \infty}, \tau_{\infty, 1, \infty}) $ is the boundary of the Akbulut cork with its involution.  We need the results of \cite[Sec. 8]{LinRubermanSaveliev2018} about the action of  $\tau_{\infty, 1, \infty})$ in Floer homology. 
These results however exploit that $\HMred(Y_{\infty, 1, \infty};\Z) = \Z^2_{(-1)}$ as computed in \cite{AkbulutDurusoy}.  The latter computation uses the knot surgery exact sequence in Heegaard-Floer with $\Z$-coefficients \cite[Thm. 3.1]{OzsvathSzaboMappingCone} which we prefer not  to assume. We will
instead rely on the following.
\begin{proposition}[\cite{AkbulutDurusoy}, \cite{LinRubermanSaveliev2018}] \label{prop:ActionTauAkbulutCork} The Floer homology of the Akbulut cork's boundary is isomorphic to 
	\begin{equation}
		\HMfrom_\bullet(Y_{\infty, 1, \infty};\Z)\simeq \hat\cT_{-1}x_0\oplus \Z_{(-1)} \langle \Delta, \alpha\rangle \oplus T^{odd}
	\end{equation}
	where $T^{odd}<\HMred(Y_{\infty, 1, \infty};\Z)$ consists of elements of odd torsion.
	Moreover the action of $(\tau_{\infty, 1, \infty})_*$ on $\HMfrom_{-1}(Y_{\infty, 1, \infty};\F)$ is given by $\begin{bmatrix}
				1& 0 & 0\\
				1 & 1& 0\\
				0 & 0 & 1 \\
			\end{bmatrix}: \F x_0\oplus \F \Delta \oplus \F \alpha \SelfArrow $.	
\end{proposition}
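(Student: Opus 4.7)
My plan is to split the proposition into two parts: the structural decomposition of $\HMfrom_\bullet(Y_{\infty,1,\infty};\Z)$, and the determination of the matrix of $(\tau_{\infty,1,\infty})_*$ on $\HMfrom_{-1}(\,\cdot\,;\F)$. For the decomposition, I would imitate closely the proof strategy of Propositions~\ref{prop:determinationUpToDegShift}, \ref{prop:HFred-1Infty-1_F_coefficients} and \ref{prop:refinement} with $Y_{\infty,1,\infty}$ in place of $Y$. Since $Y_{\infty,1,\infty}$ bounds the contractible Akbulut cork, one gets for free that $\HMfrom_\bullet(Y_{\infty,1,\infty};\F) \simeq \hat{\cT}_{(-1)} x_0 \oplus \HMred(Y_{\infty,1,\infty};\F)$, with $x_0$ the image of the cork's relative Seiberg--Witten invariant. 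The $\F$-coefficient reduced part $\F^2_{(-1)}$ can be extracted from a surgery exact triangle entirely analogous to Figure~\ref{Pic:SurgeriesSequences}, or quoted from \cite{AkbulutDurusoy,Guth2024Exotic} via \cite{HM=HF_1}.

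To promote this to $\Z$-coefficients without using a Heegaard--Floer surgery triangle over $\Z$, I would combine three ingredients, exactly as in Proposition~\ref{prop:refinement}: (a) the orientation-reversing self-diffeomorphism of $Y_{\infty,1,\infty}$ together with Poincar\'e duality \cite[28.3.4]{KM}, which forces the rational Floer homology to be symmetric around the middle degree; (b) the classical fact that the Akbulut cork twist changes an integer-valued Seiberg--Witten invariant of a suitable embedding, producing an indivisible infinite-order element $\Delta := (\tau_{\infty,1,\infty})_* x_0 - x_0 \in \HMred_{-1}(\,\cdot\,;\Z)$; and (c) the universal coefficient theorem, which combined with the $\F$-rank count pins down the torsion structure up to some odd-order summand $T^{\mathrm{odd}}$. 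This yields the stated shape $\hat{\cT}_{(-1)} x_0 \oplus \Z_{(-1)}\langle \Delta,\alpha\rangle \oplus T^{\mathrm{odd}}$.

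The identity $\tau_* x_0 = x_0 + \Delta$ then holds by the very definition of $\Delta$, with non-vanishing of $\Delta$ in $\HMred_{-1}(\,\cdot\,;\F)$ guaranteed by \cite[Thm.~A]{LinRubermanSaveliev2018} (strong non-extendability of the Akbulut cork involution). It remains to check that $\tau_*$ fixes $\Delta$ and $\alpha$ individually, i.e.\ acts as the identity on $\HMred_{-1}(Y_{\infty,1,\infty};\F) = \F\langle \Delta,\alpha\rangle$. For this I would exploit the $\tau$-equivariance of the fundamental exact sequence $\HMbar \to \HMto \to \HMfrom$, together with the triviality of $\tau_*$ on $\HMbar$ (since $Y_{\infty,1,\infty}$ is an integer homology sphere, $\HMbar$ consists of a single Laurent-tower in the unique $\mathrm{spin}^c$-structure, on which there is no nontrivial $U$-equivariant grading-preserving automorphism). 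The main obstacle I foresee is this last step: the abstract constraints $\tau_*^2 = 1$, $U$-equivariance and grading-preservation do not on their own force the action on $\F\langle \Delta,\alpha\rangle$ to be trivial, and the exact-triangle argument may leave a residual discrete ambiguity. I would close this gap either by invoking the explicit local-equivalence computation in \cite[Sec.~8]{LinRubermanSaveliev2018} of the $\iota$-complex of the Akbulut cork, or by a direct model computation on an equivariant surgery description of $Y_{\infty,1,\infty}$.
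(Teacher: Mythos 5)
Two steps of your plan do not go through, and they are exactly the two places where the paper's argument does its real work. First, your route to the integral statement $\Z_{(-1)}\langle \Delta,\alpha\rangle$ leans on ingredient (a), an orientation-reversing self-diffeomorphism of $Y_{\infty,1,\infty}$, in imitation of Proposition~\ref{prop:refinement}. That diffeomorphism is a special feature of $Y=\partial P_0$ (coming from the symmetry of the plumbing graph) and is not available here; in fact it cannot exist: since $\HMred(Y_{\infty,1,\infty};\F)\simeq \F^2_{(-1)}$ is concentrated in degree $-1$, Poincar\'e duality \cite[28.3.4]{KM} for such a diffeomorphism would force $\HMred_{-1}(Y_{\infty,1,\infty};\Q)\simeq \HMred_{-2}(Y_{\infty,1,\infty};\Q)=0$, i.e.\ both $2$-primary summands finite --- the opposite of what you must prove. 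Ingredient (b), the cork twist changing an integral Seiberg--Witten invariant, produces only \emph{one} infinite-order class (an integral lift of $\Delta$), so (b)+(c) leave the second summand undetermined. The paper instead gets both $\Z$ summands from \cite[Thm.~A]{LinRubermanSaveliev2018}: the Lefschetz number of $(\tau_{\infty,1,\infty})_*$ on $\HMred(Y_{\infty,1,\infty};\Q)$ equals $2$, which is impossible unless that rational reduced homology has rank $2$, i.e.\ $k_1=k_2=\infty$ in the universal-coefficient analysis. (Also, Thm.~A is the Lefschetz-number formula; strong non-extendability is Thm.~D, and the nonvanishing of $\Delta$ mod $2$ comes from the cork twist changing mod-$2$ invariants, not from Thm.~A.)

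Second, you yourself flag that the equivariant exact-triangle/$\HMbar$ argument does not force $\tau_*=\mathrm{id}$ on $\F\langle\Delta,\alpha\rangle$, and you propose to close the gap by quoting \cite[Sec.~8]{LinRubermanSaveliev2018} or an unspecified model computation; as it stands this is a genuine gap rather than a proof. The paper closes it with the same Lefschetz-number input: once the degree $-1$ reduced part is known to be $\Z^2$ up to odd torsion, $(\tau_{\infty,1,\infty})_*$ induces an involution $f$ of $\Z^2$ (using $\Hom_\Z(T^{odd},\Z)=0$) whose trace has absolute value $2$; an involution of $\Z^2$ is, up to conjugation, either diagonalizable or the coordinate swap (trace $0$), so $f=\pm\mathrm{id}$, hence $f\equiv \mathrm{id}$ mod $2$, and combined with $\tau_*x_0=x_0+\Delta$ this yields the stated matrix. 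This is precisely why the integral structure is established first; note also that outsourcing the step to \cite[Sec.~8]{LinRubermanSaveliev2018} silently reintroduces the $\Z$-coefficient computation of \cite{AkbulutDurusoy}, i.e.\ the integral Heegaard--Floer surgery triangle that the paper deliberately avoids assuming.
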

\begin{proof} \cite{AkbulutDurusoy} showed that $\HMfrom_\bullet(Y_{\infty, 1, \infty};\F )\simeq \hat\cT_{-1}\oplus \F_{(-1)}^2$. The universal coefficient theorem then implies that
$\HMfrom_\bullet(Y_{\infty, 1, \infty};\Z)\simeq \hat\cT_{-1}\oplus({\Z}/{2^{k_1}\Z})_{(-1)}\oplus ({\Z}/{2^{k_2}\Z})_{(-1)}\oplus  T^{odd}$
where $k_1,k_2\in \{\N_{>0}, \infty\}$ and we define ${\Z}/{2^{\infty}\Z} = \Z$.
Now \cite[Thm. A]{LinRubermanSaveliev2018} implies that the Lefschetz number of $(\tau_{\infty, 1, \infty})_*$
acting on $\HMred(Y_{\infty, 1, \infty};\Q)$ is equal to $2$, thus the only possibility is that  $k_1 = k_2 = \infty$.

Let $f:\Z^2\to \Z^2$ be  the homomorphism
obtained by restricting $(\tau_{\infty, 1, \infty})_{-1}$ to the $\Z^2$ summand in $\HMred_{-1}(Y_{\infty, 1, \infty};\Z) $ and then composing with the projection map, since  $\tau$ preserves $\HMred(Y;\Z)$, and  $\Hom_{\Z}(T^{odd}, \Z)= \{0\}$, $f$ is an involution.

An involution acting on $\Z^2$ is, up to conjugation, either diagonalizable or  the map swapping the coordinates. 
Since we know that the trace of $f\otimes 1:\Z^2\otimes \Q\to \Z^2\otimes \Q$ is minus the Lefschetz number hence equal to $-2$, $f$ is diagonalizable.

Now the claimed form for the action of $(\tau_{\infty, 1, \infty})_{-1}$ with $\F$ coefficients follows, as in \cite{LinRubermanSaveliev2018}, from the fact that the Akbulut cork twist is able to modify the Seiberg-Witten invariants modulo $2$.
\end{proof}

\begin{proposition} \label{prop:ActOfTauOnHM-1} $\tau$ acts on $\widehat{HM}_{-1}(Y) \simeq \F   x_0 \oplus \F \Delta \oplus \F e_1$ as
			$\begin{bmatrix}
				1& 0 & 0\\
				1 &1& 0\\
				0 & 0 &1 \\
			\end{bmatrix}$ where $x_0  = \HMfrom(P_0\setminus \ball)(\hat 1)$ is the relative Seiberg-Witten invariant of the protocork $P_0$ and $\Delta, e_1$ is an $\F$-basis of $ \HMred_{-1}(Y)$.
\end{proposition}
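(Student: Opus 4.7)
My plan is to compute $\tau_*$ on $\HMfrom_{-1}(Y) = \F x_0 \oplus \HMred_{-1}(Y) \cong \F^3$ in three steps: first pin down the action on $x_0$ modulo the reduced part, then use Proposition~\ref{prop:refinement} to identify $\tau_*(x_0)-x_0$ with the class $\Delta$, and finally use the equivariant diagram \eqref{eq:TwoLES} together with Proposition~\ref{prop:ActionTauAkbulutCork} to show $\tau_*$ acts as the identity on the reduced subspace $\HMred_{-1}(Y) = \F\langle\Delta, e_1\rangle$.

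For the first two points, $\HMred$ is the $\F[U]$-torsion submodule of $\HMfrom$, hence canonical and $\tau_*$-invariant; in degree $-1$ the quotient $\HMfrom_{-1}(Y)/\HMred_{-1}(Y)\cong \F x_0$ is one-dimensional over $\F=\F_2$, so $\tau_*$ must act trivially on it (as an invertible element of $\F$) and $\tau_*(x_0) = x_0 + \delta$ for some $\delta \in \HMred_{-1}(Y)$. Proposition~\ref{prop:refinement} then produces an indivisible integral lift $\tilde{\Delta}\in \HMred_{-1}(Y;\Z)$ with $\tau_*(x_0)-x_0=\tilde{\Delta}$, using that the protocork twist is able to change integral Seiberg-Witten invariants (for instance for the pair $K3\#\bar\CP^2$ vs.\ $3\CP^2\#20\bar\CP^2$). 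Reducing modulo $2$, I take $\Delta := \tilde{\Delta}\bmod 2$ as the first basis vector of $\HMred_{-1}(Y)$; this yields the first column of the matrix and reduces the problem to showing $\tau_*|_{\HMred_{-1}(Y)} = \mathrm{id}$.

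For the third step, I exploit that every cobordism map in \eqref{eq:TwoLES} intertwines the involutions $\tau_{a,b,c}$ (Remark~\ref{remarkSeq}). Proposition~\ref{prop:ActionTauAkbulutCork} tells us that on $\HMfrom_{-1}(Y_{\infty,1,\infty})$ the involution acts trivially on the reduced summand. The plan is to combine the horizontal and vertical exact triangles in \eqref{eq:TwoLES}, using the Floer homology of $Y_{0,1,\infty}$ computed earlier in this section and of $Y_{-1,1,\infty}$ from Claim~\ref{claim:HFY-11infty}, to produce an equivariant isomorphism $\Phi:\HMred_{-1}(Y)\xrightarrow{\sim}\HMred_{-1}(Y_{\infty,1,\infty})$. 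Triviality of the action on the target would then force $\tau_*|_{\HMred_{-1}(Y)}$ to be conjugate to the identity, hence the identity itself, giving the remaining two columns of the matrix.

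The main obstacle is the construction of $\Phi$: cobordism maps do not a priori respect the decomposition $\HMfrom \simeq (\text{towers}) \oplus \HMred$, so extracting a map that is genuinely an isomorphism on the reduced parts requires careful bookkeeping of exactness and degree counts at each stage of \eqref{eq:TwoLES}. All four vertices of the diagram have $\HMred_{-1}$ of $\F$-rank $2$, so dimension-counting together with injectivity coming from the exact triangles should suffice. A possible fallback, if a clean isomorphism cannot be arranged directly, is to track the two basis vectors $\Delta$ and $e_1$ separately through the diagram, using the explicit identification of $\Delta$ with the Seiberg--Witten invariant defect to constrain its image in $\HMred_{-1}(Y_{\infty,1,\infty})$.
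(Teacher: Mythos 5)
Your route is essentially the paper's: show $\tau_*(x_0)-x_0\in\HMred_{-1}(Y)$, show it is nonzero because the $P_0$-twist changes Seiberg--Witten invariants, and show $\tau_*=\mathrm{id}$ on $\HMred_{-1}(Y)$ by feeding the equivariant diagram \eqref{eq:TwoLES} with Claim~\ref{claim:HFY-11infty} and Proposition~\ref{prop:ActionTauAkbulutCork}. Two points, however. The appeal to Proposition~\ref{prop:refinement} for the identification and nonvanishing of $\Delta$ is circular as the paper is organized: the proof of Proposition~\ref{prop:refinement} itself defers to the proof of the present proposition (to \eqref{eq:variationSWinvK3CP2} and to the fact that the twist relates $K3\#\bar\CP^2$ and $3\CP^2\#20\bar\CP^2$). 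You should instead run the direct argument you only gesture at in parentheses: the Akbulut cork is supported by $(P_0,\tau)$, so twisting an embedded $P_0\subset K3\#\bar\CP^2$ produces $3\CP^2\#20\bar\CP^2$; writing $M$ for the complement with a ball removed, the gluing formula gives $\HMarrow(M,\sstruc_M)(x_0)=\check 1$ while $\HMarrow(M,\sstruc_M)(\tau_*x_0)=\HMarrow(M,\sstruc_M)(x_0+\Delta)=0$, hence $\HMarrow(M,\sstruc_M)(\Delta)\neq 0$, so $\Delta\neq 0$ and extends to a basis $\Delta,e_1$ of $\HMred_{-1}(Y)$.

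Second, the ``main obstacle'' you flag in the last step dissolves: cobordism maps are $\F[U]$-linear, so they carry $U$-torsion to $U$-torsion, i.e.\ $\HMred_{-1}$ to $\HMred_{-1}$; no compatibility with a tower/reduced splitting is needed, and no single isomorphism $\Phi$ has to be constructed --- a chain of two equivariant injections suffices. Concretely, in the vertical triangle $\widehat{HM}_0(Y_{-1,1,\infty})=0$ by Claim~\ref{claim:HFY-11infty}, so by exactness $\widehat{HM}_{-1}(Y_{0,1,\infty})\to\widehat{HM}_{-1}(Y_{\infty,1,\infty})$ is injective and restricts to an injection $\HMred_{-1}(Y_{0,1,\infty})\hookrightarrow\HMred_{-1}(Y_{\infty,1,\infty})$; equivariance and Proposition~\ref{prop:ActionTauAkbulutCork} then force $(\tau_{0,1,\infty})_*=\mathrm{id}$ on $\HMred_{-1}(Y_{0,1,\infty})$. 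Next, in the horizontal triangle $\widehat{HM}_0(\SS^1\times\SS^2)=0$, so $\widehat{HM}_{-1}(Y)\to\widehat{HM}_{-1}(Y_{0,1,\infty})$ is injective and the same reasoning yields $\tau_*=\mathrm{id}$ on $\HMred_{-1}(Y)$. This two-step chain is exactly the paper's argument; with these repairs your outline becomes a complete proof.
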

\begin{proof} 
	We will consider  the  equivariant sequences  \eqref{eq:TwoLES} with 
	In the  vertical equivariant exact sequence
		\begin{equation}
			\widehat{HM}_0(Y_{-1,1,\infty})\to \widehat{HM}_{-1}(Y_{0,1,\infty}) \to  \widehat{HM}_{-1}(Y_{\infty,1,\infty}) \to \cdots
		\end{equation}
		$\HMred_{-1}(Y_{0,1,\infty})\simeq \F^2$ is mapped injectively  to $\HMred_{-1}(Y_{\infty,1,\infty})\simeq \F^2$
		because $\widehat{HM}_0(Y_{-1,1,\infty}) = (0) $. 
		Since $(\tau_{0,1,\infty})_*$ acts as $id$ on $\HMred_{-1}(Y_{0,1,\infty})$ by Proposition~\ref{prop:ActionTauAkbulutCork},
		and the cobordism maps are  equivariant, $(\tau_{-1,1,\infty})_* $  acts as $id$ on $\HMred_{-1}(Y_{-1,1,\infty})$.
		The  same argument applied to the sequence 
		\begin{equation}
			(0) =\widehat{HM}_0(\SS^1\times \SS^2)\to \widehat{HM}_{-1}(Y_{0,0,\infty}) \to  \widehat{HM}_{-1}(Y_{0,1,\infty}) \to \cdots
		\end{equation}
		implies that $\tau_* = (\tau_{0,0,\infty})_*$ acts as $id$ on $\HMred_{-1}(Y)\simeq \F^2$. 
		
		Let $x_0\in \widehat{HM}_{-1}(Y)$	be the relative Seiberg-Witten invariant of the protocork bounded by $Y$.
		Then $\widehat{HM}_{-1}(Y)\simeq \F x_0 \oplus \HMred_{-1}(Y) $ and 
		\begin{equation}\label{eq:TauAndDelta}
			\tau_*(x_0) = x_0 + \Delta
		\end{equation} where $\Delta \in \HMred_{-1}(Y)$ \cite[Thm. 1.1]{Ladu}. Since the Akbulut cork \cite{Akbulut91} is supported by the protocork $(P_0, \tau)$\cite[Fig. 5]{Ladu},
		by twisting an embedded copy of $P_0$ in  $K3\#\overline{\CP}^2$ we obtain $3\CP^2\# 20\overline{\CP^2}$.
		Hence denoting by $M\subset K3\#\overline{\CP}^2$ the complement of the embedding of $P_0$ with a ball removed:
		\begin{align}\label{eq:variationSWinvK3CP2}
				& 	\check  1= \HMarrow(M,\sstruc_M)(x_0) 	& 	 		 0 = \HMarrow(M,\sstruc_M)(x_0+\Delta) 
		\end{align}
		 for some $\sstruc_M \in Spin^c(M)$, so  $\HMarrow(M,\sstruc_M)(\Delta) = \check 1  \mod 2$.
		Therefore $\Delta\neq 0$ and can be extended to a basis of $\HMred_{-1}(Y)$.
\end{proof}

\section{Computation of $\HF^+(Y_{-1, 1, \infty})$.}\label{sec:Computation-11Infty}
\label{Sec:BigComputation}
\newcommand{\OS}{Ozsv\'{a}th and Sz\'{a}b\'{o} }
The strategy that we follow is roughly the same that has been used in \cite{HalesThesis} to carry
out the computation of $HF^+$ for the boundary of the corks $W_n$. Firstly we reduce the computation
to that of a surgery on a $(1,1)$-knot, we compute the $CKF^\infty$ complex using the method of \cite[Section 6.2]{OzsvathSzaboKnotFloer} and we conclude using the mapping cone formula \cite{OzsvathSzaboMappingCone}.

\subsection{Review of \OS's method.}\label{ReviewOzsvathSzaboMethod}
In \cite[Section 6.2]{OzsvathSzaboKnotFloer}, \OS describe a method to compute the  $CFK^\infty$ complex of a $(1,1)$-knot in $\SS^3$.
We briefly review it and spell out some orientation conventions implicit in \cite{OzsvathSzaboKnotFloer}.

With reference to \autoref{Figure:BraidGroupGenerators}~(b), we denote by $\overline\tau_1,\overline\tau_2, \overline\tau_3$ the generators of the braid group $B_4$, our convention for composition is shown in \autoref{Figure:BraidGroupGenerators}~(c).
Consider a knot $K(\epsilon, \sigma) \subset \SS^3$ as in \autoref{Figure:BraidGroupGenerators}~(a) where $\epsilon \in \{ \pm1\}$ and $\sigma$ induces a permutation preserving the color of the endpoints. In order to  simplify the exposition, and because these will be the only cases of interest to us, we restrain ourselves to $\sigma $ in the subgroup $\langle \overline\tau_2^2, \overline\tau_3\rangle$. 
A doubly pointed Heegaard diagram of genus one for $K(\epsilon, \sigma)$ is obtained as follows.
Begin with $(\Sigma, \alpha, \beta, z,w)$ as in \autoref{Figure:InitialDiagram}~(a), we stress that $\Sigma, \alpha $ and $\beta$
must be oriented as in the picture.  The  braid $\sigma$ acts on $\Sigma$, this action is shown in  \autoref{Figure:InitialDiagram} (b) and (c)  for the case of  $(\overline\tau_2)^{\pm 2}$ and $(\overline\tau_3)^{\pm 1} $ and then extended to $\langle \overline\tau_2^2, \overline\tau_3\rangle$ by composition.
We can therefore produce a new diagram $(\Sigma, \sigma(\alpha), \beta, z,w)$.
Now,  if  the algebraic intersection of the two curves with respect to our chosen orientation of $\Sigma$ satisfies $\sigma(\alpha)\cdot \beta = \epsilon$, then $(\Sigma,  \sigma(\alpha), \beta, z,w)$ is the sought diagram for $K(\epsilon,\sigma)$. 
Otherwise we concatenate  $ \sigma(\alpha)$ to $k\in \Z$ copies of  vertical meridian $\mu$ as in \autoref{Figure:addTwists} so that  $\alpha' :=  \sigma(\alpha)\#k \mu $ satisfies $\alpha'\cdot \beta = \epsilon$; the final diagram will then  be $(\Sigma, \alpha', \beta, z,w)$.

To check that our conventions are correct one can apply them to  the left handed trefoil $K(1, (\overline\tau_2)^{-2}\overline \tau_3 (\overline \tau_2)^2)$ (as showed in \autoref{Figure:LiftedExample}), indeed a wrong choice of conventions would lead one to compute the diagram of $K(\pm 1, (\overline\tau_2)^{2}\overline \tau_3^{-1} (\overline \tau_2)^{-2})$, i.e. the right handed trefoil ($-1$) or the unknot ($+1$) which can be easily distinguished from their complex.
We also warn the reader that the example \cite[Fig.12]{OzsvathSzaboKnotFloer} corresponds to $K(1,(\overline{\tau_2})^{-4}(\overline \tau_3)^2(\overline \tau_2)^{2})$, indeed the crossings of \cite[Fig.8]{OzsvathSzaboKnotFloer}  should be inverted.

\begin{figure}
\includegraphics[scale=0.5]{./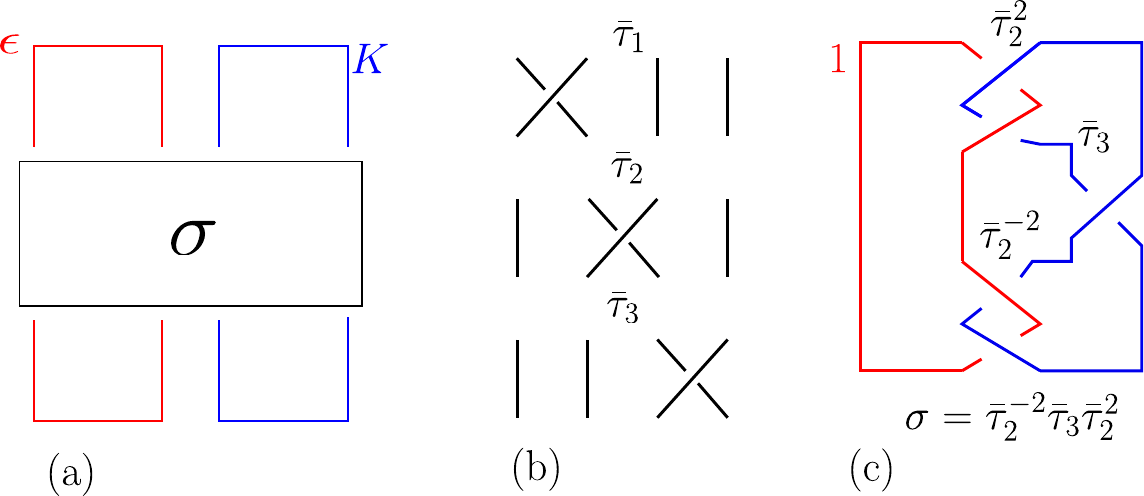}
	\caption{ 
	(a) The knot $K(\epsilon, \sigma)$ is described by the green curve, here $\epsilon \in \{\pm 1\}$ and $\sigma \in B_4$ is a product of $(\overline\tau_2)^{\pm 2}$ and $(\overline\tau_3)^{\pm 1 }$. (b) Generators of $B_4$. (c) Example to show our composition convention, here $ \sigma = (\overline \tau_2)^{-2}\overline \tau_3(\overline \tau_2)^{2}$ and $K(1,\sigma)$ is the left handed trefoil. \label{Figure:BraidGroupGenerators}} 
\end{figure}

\begin{figure}
\includegraphics[scale=0.7]{./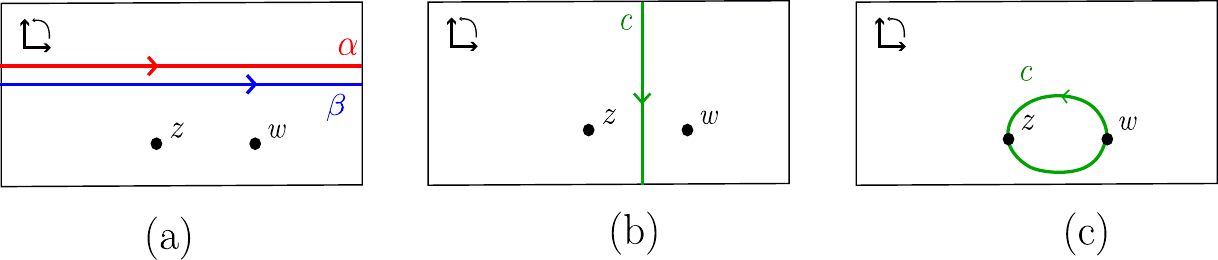}
\caption{ (a) Initial doubly pointed Heegaard diagram, the surface $\Sigma$ is the torus obtained by identifying the opposite sides of the rectangle and has  the specified orientation. (b) The action of  $(\overline \tau_2)^2$ on $\Sigma$ is a \emph{full} Dehn twist around the curve $c$. (c) The action of  $\overline \tau_3$ on $\Sigma$ is \emph{half} Dehn twist around the curve $c$. \label{Figure:InitialDiagram}}
\end{figure}

\begin{figure}
\includegraphics[scale=0.7]{./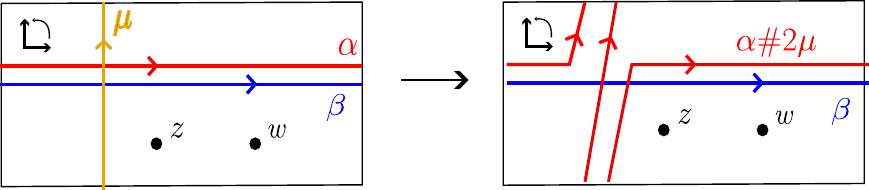}
\caption{Concatenating a multiple of the meridian $\mu$.  \label{Figure:addTwists}}
\end{figure}
\begin{figure}
\includegraphics[scale=0.7]{./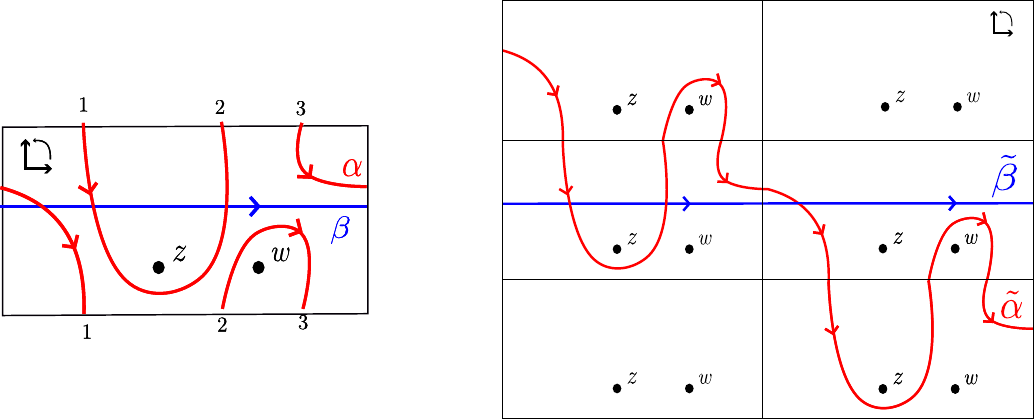}
\caption{ Double pointed Heegaard diagram (left) and lifted Heegaard diagram (right)  for example \autoref{Figure:BraidGroupGenerators}~(c).\label{Figure:LiftedExample}}
\end{figure}

Once a genus one Heegaard diagram $(\Sigma, \alpha, \beta, z, w)$ for $K(\epsilon, \sigma)$ is found, we produce a \emph{lifted} Heegaard diagram.  Denote by $\pi:\C\to \Sigma$  the universal cover  and orient $\C$ so that $\pi$ is orientation preserving.
Choose a point in $\pi^{-1}(\alpha\cap \beta)$ and use it to produce lifts $\tilde\alpha, \tilde\beta\subset \C$.
The  tuple $(\C, \tilde\alpha, \tilde \beta, \pi^{-1}(z), \pi^{-1}(w))$ is a lifted Heegaard diagram for $K(\epsilon, \sigma)$ \cite{OzsvathSzaboKnotFloer}. An example is shown in \autoref{Figure:LiftedExample}.
The complex $CFK^\infty(\SS^3, K(\epsilon, \sigma))$  is generated by $\tilde \alpha \cap \tilde \beta$ and the differential counts holomorphic Whitney disks in $(\C, \tilde \alpha, \tilde \beta)$ \cite[Sec. 6.2]{OzsvathSzaboKnotFloer}, the role of the additive assignements $n_z, n_w$ is replaced by $n_{\pi^{-1}(z)}$ and $n_{\pi^{-1}(w)}$ in the lifted setting. Notice that in the lifted diagram there are infinitely many marked points. One of the advantage of this method
is that we can determine the differential, even with $\Z$-coefficients, in a combinatorial way \cite[6.4]{OzsvathSzaboKnotFloer} \cite{GodaMatsudaMorifuji}\cite{Rasmussen2003}.

\subsection{Reduction to surgery on a knot.}
We begin by reducing the computation to a surgery on a knot in $\SS^3$.
For the scope of this section we set $\sigma :=  \bar \tau_2^2 \bar \tau_3^2 \bar \tau_2^2\bar \tau_3^{-2} \bar \tau_2^{-2}\in B_4$ and $K:= K(1,\sigma)$.

\begin{proposition}\label{prop:ReductionSurgeryOnKnot} As  graded $\F[U]$-modules 
	$\HFred(Y_{-1,1,\infty}) \simeq \HFred(\SS_{-1}^3(K))$.
\end{proposition}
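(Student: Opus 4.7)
The plan is to produce an explicit diffeomorphism $Y_{-1,1,\infty} \cong \SS^3_{-1}(K)$ via Kirby calculus and invoke the standard functoriality of Heegaard Floer homology under diffeomorphisms; the isomorphism of graded $\F[U]$-modules $\HFred(Y_{-1,1,\infty}) \simeq \HFred(\SS^3_{-1}(K))$ is then immediate.

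Starting from the surgery presentation of $Y_{-1,1,\infty}$ in Figure~\ref{Pic:Yabc}, I would first delete the $\infty$-framed blue component: an $\infty$-framed unknot corresponds to the trivial Dehn filling and can simply be erased. This leaves a two-component framed link in $\SS^3$ with framings $-1$ and $+1$ together with whatever clasps the protocork diagram imposes between the two components. Next, I would use the $+1$-framed unknot as a blowdown: perform handle slides to pull every strand that passes through its spanning disk off the $+1$-framed circle, which converts the linking into a full left-handed twist region in those strands, and then erase the $+1$-framed unknot, adjusting the framing of the surviving component by $-\ell^2$ (where $\ell$ is the linking number with the blown-down circle). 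After the blowdown, one is left with a single knot $K \subset \SS^3$ equipped with framing $-1$, so $Y_{-1,1,\infty} \cong \SS^3_{-1}(K)$.

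The main obstacle is recognizing that the knot $K$ produced by this blowdown is precisely the $(1,1)$-knot $K(1,\sigma)$ for the specific braid word $\sigma = \bar\tau_2^2 \bar\tau_3^2 \bar\tau_2^2 \bar\tau_3^{-2} \bar\tau_2^{-2}$. To do this I would draw $K$ as a closed braid on four strands sitting on a standard Heegaard torus of $\SS^3$ and read off the twists contributed by each stage of the reduction: each full twist forced by a handle slide over an $\infty$-framed component contributes $\bar\tau_2^{\pm 2}$, and each half-twist from the protocork clasps contributes $\bar\tau_3^{\pm 1}$, matching the conventions fixed in Section~\ref{ReviewOzsvathSzaboMethod}. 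Careful bookkeeping of signs and composition order (using the convention of Figure~\ref{Figure:BraidGroupGenerators}(c)) should then yield exactly the word $\sigma$ above.

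Once the diffeomorphism $Y_{-1,1,\infty} \cong \SS^3_{-1}(K(1,\sigma))$ is established, naturality of $\HF^-$ under diffeomorphisms gives an isomorphism $\HF^-(Y_{-1,1,\infty}) \cong \HF^-(\SS^3_{-1}(K))$ of absolutely $\Q$-graded $\F[U]$-modules, which restricts to the desired isomorphism $\HFred(Y_{-1,1,\infty}) \simeq \HFred(\SS^3_{-1}(K))$. The rest of Section~\ref{sec:Computation-11Infty} then reduces to computing $\HFred(\SS^3_{-1}(K))$ via the $CFK^\infty$ complex of the $(1,1)$-knot $K$ and the mapping cone formula, exactly as announced.
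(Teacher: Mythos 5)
Your proposal rests on a claim the paper neither makes nor needs: that $Y_{-1,1,\infty}$ is \emph{diffeomorphic} to $\SS^3_{-1}(K)$. In the paper the two manifolds are related only by a two-handle cobordism: there is a curve $\gamma\subset Y_{-1,1,\infty}$ bounding a punctured torus (Figure~\ref{Fig:FramedGamma}), and attaching a $2$-handle along $\gamma$ with framing $-1$ relative to the Seifert framing yields $\SS^3_{-1}(K)$, while the $0$-framed filling yields $\SS^1\times\SS^2$; the isomorphism $\HFred(Y_{-1,1,\infty})\simeq\HFred(\SS^3_{-1}(K))$ is then extracted from the resulting surgery exact triangle via Lemma~\ref{lemma_algebraicCaseb0}, and the grading is pinned down separately by constraining the \spinc structures on the cobordism through the adjunction inequality and the $\Q$-degree formula $\frac{1+c_1^2(\sstruc)}{4}$. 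If the two manifolds were simply diffeomorphic, none of this machinery (in particular the whole second half of the paper's proof) would be necessary, which should already signal that your route is not what is going on.

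Concretely, your Kirby-calculus sketch does not go through. After erasing the $\infty$-framed curve of Figure~\ref{Pic:Yabc} you are not left with a two-component link consisting only of the $\pm1$-framed blue curves: the surgery presentation of $Y_{a,b,c}$ contains further components (otherwise, for instance, $Y_{\infty,1,\infty}$ would be $+1$-surgery on an unknot, i.e.\ $\SS^3$, rather than the Akbulut cork boundary, and $Y_{-1,1,\infty}$ itself would become $\SS^3$ after your blowdown, contradicting $\HFred(Y_{-1,1,\infty})\simeq\F^2$). Indeed the caption of Figure~\ref{Fig:Triangle-11Infty} records that one blows down \emph{both} the $+1$- and $-1$-framed curves and still has a nontrivial diagram, on which $K$ and the attaching curve $\gamma$ live. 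Even setting this aside, the step you defer to ``careful bookkeeping'' --- identifying the knot produced by your reduction with $K(1,\sigma)$ for $\sigma=\bar\tau_2^2\bar\tau_3^2\bar\tau_2^2\bar\tau_3^{-2}\bar\tau_2^{-2}$ --- is precisely the hard content and is not carried out (and your heuristic that $\infty$-framed components force handle slides contributing $\bar\tau_2^{\pm2}$ is not meaningful: an $\infty$-framed component is simply erased). Since the diffeomorphism is not established, neither the ungraded isomorphism nor, a fortiori, the grading statement is proved; to repair the argument you should follow the triangle-plus-cobordism route, where the degree-zero statement requires the adjunction/grading analysis of the map $\HF^-(W,\sstruc)$ rather than naturality under a diffeomorphism.
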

\begin{proof}
	The surgeries shown in \autoref{Fig:Triangle-11Infty} give rise to an  exact triangle 
	\cite[Thm.6.2]{OzsvathSzaboRationalSurgeries}
	 \begin{equation*}
		\HF^-(Y_{-1,1,\infty})  \to \HF^- (\SS_{-1}^3(K)) \to \HF^-(\SS^1\times \SS^2)\to \HF^-(Y_{-1,1, \infty}) \to \dots
	\end{equation*}
	and an application of Lemma~\ref{lemma_algebraicCaseb0} gives us an isomorphism between the reduced homology of $Y_{-1,1, \infty}$ and $\SS_{-1}^3(K)$. It remains
	to show that this isomorphism preserves the grading.
	
	\begin{figure}
	\includegraphics[scale=0.7]{./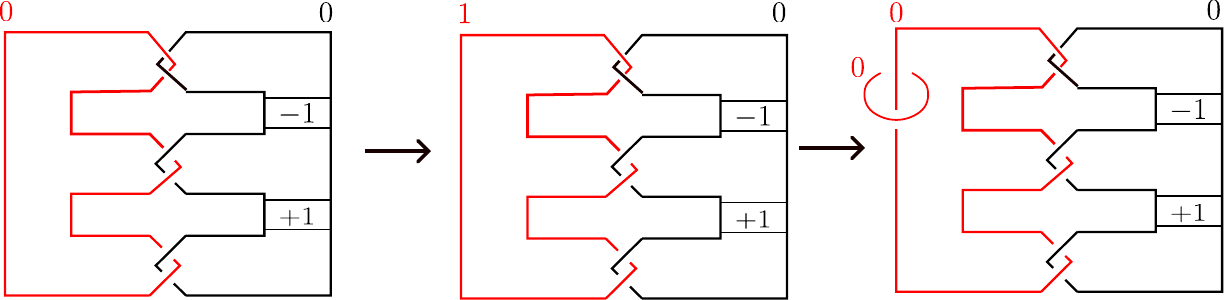}
\caption{Floer exact triangle $Y_{-1,1,\infty}\to \SS^3_{-1}(K) \to \SS^1\times \SS^2$. This surgery presentation for $Y_{-1,1,\infty}$
is obtained  from that of Figure~\ref{Pic:Yabc}~(right) by isotopying the top clasp to the bottom of the picture and then blowing down the $+1$ and $-1$-framed curves.
            \label{Fig:Triangle-11Infty}.}	
	\end{figure}

		The cobordism map $\HF^- (Y_{-1,1,\infty})\to (\SS_{-1}^3(K))$ is induced by the cobordism $W$
obtained by gluing a $2$-handle to $ Y_{-1,1,\infty}$ along the framed curve $\gamma$ as shown in \autoref{Fig:FramedGamma}~(left).
	In \autoref{Fig:FramedGamma}~(right) we show that $\gamma$ bounds a punctured torus and that the framing is equal to $-1$ with respect 	to the Seifert framing in $Y_{-1,1, \infty}$. 
	In particular the signature of  $W$ is $-1$.

		If $\sstruc$ is a \spinc structure on $W$, then $c_1(\sstruc)  = k  [S]^*$ where  $S$ is  the punctured torus capped
	with the $2$-handle of $W$ and $k\in \Z$.
	In order for $\HF^-(W,\sstruc)$ to be a non-trivial map, $\sstruc$ must satisfy the adjunction inequality \cite{OzsvathSzaboHolTriangles}:
	\begin{equation}
		2g(S)-2\geq S\cdot S + | \langle c_1(\sstruc), S\rangle |  = -1+|k|,
	\end{equation}
	therefore the only possible contribution to the cobordism map come from the \spinc structures with $k=0,\pm 1$.
	These maps have $\Q$-grading equal to $\frac {1+c_1^2(\sstruc)}{4}=\frac{1-k^2}{4	}$, so the only possibility is that $k=\pm 1$ 
	because otherwise $\HF^-(Y_{-1,1,\infty})$ would be non-trivial in some non-integral degree ($Y_{-1,1, \infty}$  bounds a contractible $4$-manifold). 
\end{proof}

	\begin{figure}
		\includegraphics[scale=0.5]{./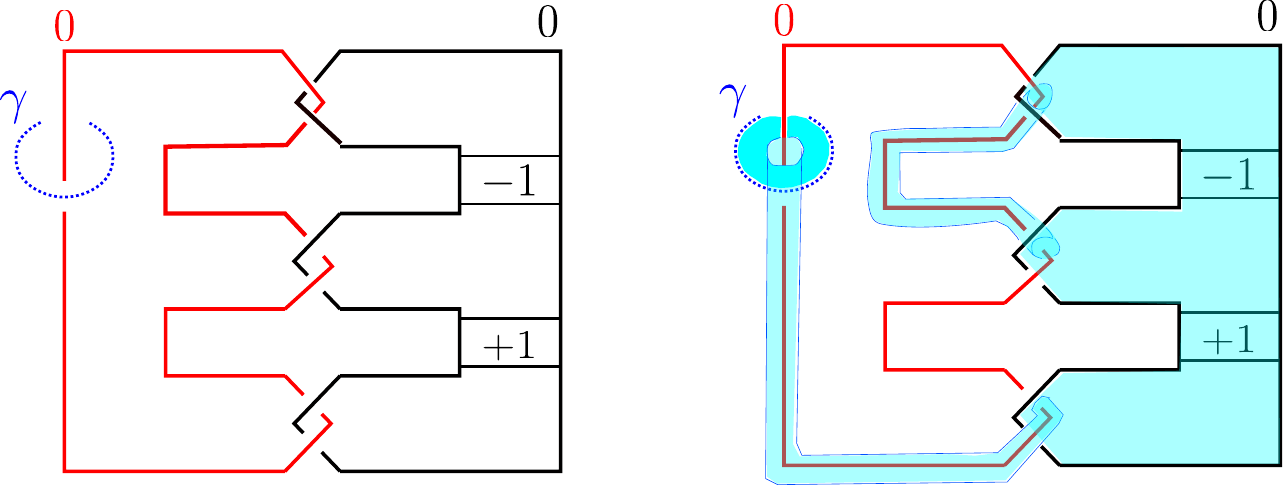}
		\caption{ Left: the framed curve $\gamma$. Right: a  Seifert surface for $\gamma$ in $\SS_{-1}^3(K)$ is obtained by capping the blu surface with the disk bounding the black curve. \label{Fig:FramedGamma}}
	\end{figure}

\subsection{Computation of $CFK^\infty(\SS^3,K)$.}
We apply \OS method to $K$. 
The resulting Heegaard diagram is drawn on the bottom-right of Figure~\ref{Fig:HD-11Infty} together with some intermediate steps to derive it so that the reader can check its correctness.
The lifted Heegaard diagram is shown in Figure~\ref{Fig:LiftedHD-11Infty}. We can find pseudo-holomorphic Whitney disks and obstruct their existence by inspecting positive domains \cite[Prop. 6.4]{OzsvathSzaboKnotFloer}. We obtain a complex where the differentials have a sign ambiguity, however in our case the complex is so simple that it will be isomorphic to that shown in Figure~\ref{Fig:Complex-11Infty} in any case.
Indeed by changing the sign of the generators we can assume that the two squares are as in Figure~\ref{Fig:Complex-11Infty} and the different possibilities for $\partial x_5$ yield isomorphic complexes.
\begin{figure}
	\includegraphics[scale=0.7]{./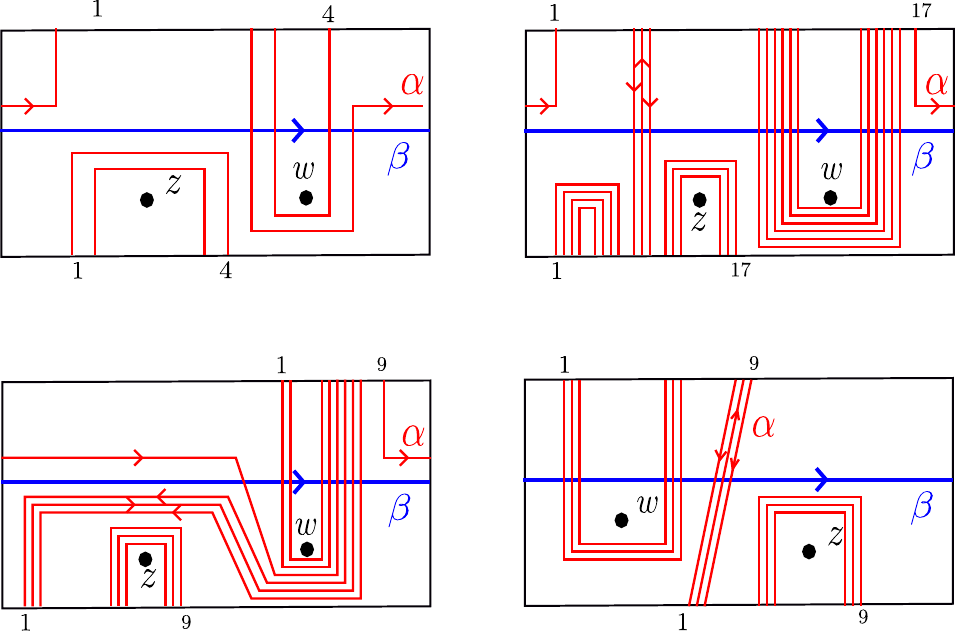}
\caption{Derivation of the Heegaard diagram for the knot $K(1,\sigma)$ on the torus. Top-left: application of $\bar \tau_2^2\bar \tau_3^{-2} \bar \tau_2^{-2}$. Top-right: application of $\bar \tau_2^2 \bar \tau_3^2 $.
Bottom-left: we perform an isotopy of the $\alpha$-curve. Bottom-right: another isotopy to get the final diagram.
            \label{Fig:HD-11Infty}.}	
\end{figure}	
\begin{figure}
	\includegraphics[scale=0.45]{./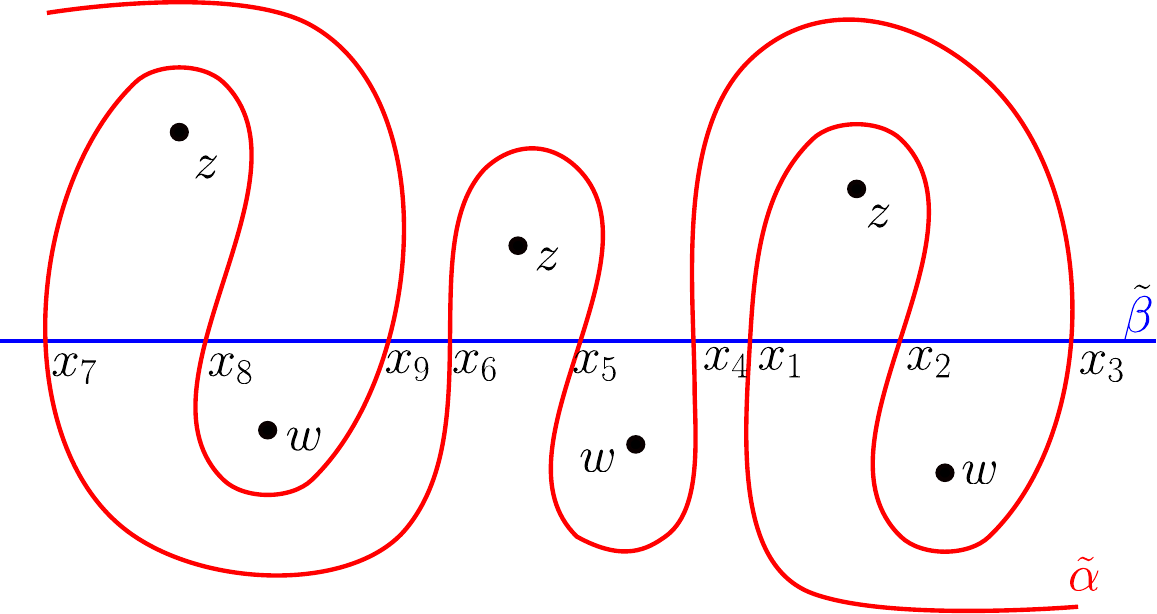}
\caption{Lifted Heegaard diagram for the knot $K(1,\sigma)$. The plane is oriented counterclockwise.
            \label{Fig:LiftedHD-11Infty}}	
\end{figure}	
\begin{figure}
	\includegraphics[scale=0.8]{./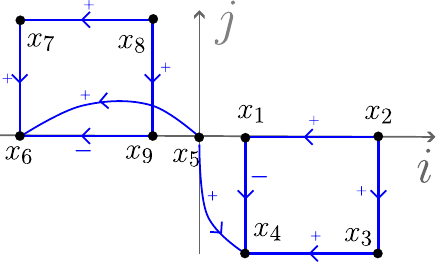}
\caption{The filtered complex $CFK^\infty(K(1,\sigma);\Z) $ is generated over $\Z[U]$ 	by the complex shown in the picture. Each arc
represents a $\pm 1$ component of the  differential, for example $\partial x_1 = -x_4$ and $\partial x_2 = x_1 + x_3$.
            \label{Fig:Complex-11Infty}.}	
\end{figure}

\subsection{Applying the mapping cone formula.}
We briefly recall how to apply \cite{OzsvathSzaboMappingCone}. 
Set  $(C, \partial) := CFK^\infty(\SS^3, K)$  and let $\cF_i, \cF_j$ be  the projections of the filtration $\cF:C\to \Z\oplus \Z$, 
so that $\cF(x) = (\cF_i(x), \cF_j(x))$ for any $x\in C$.

For any $s\in \Z$, we have quotient complexes
\begin{align}
& B^+ := C/\{x\  | \  \cF_i(x)<0\}  &  A^+_s :=  C/\{x\ | \ \cF_i(x)<0\wedge \cF_j(x)<s\} 
\end{align}
and we can  form
\begin{align}
&    \mathbb A^+ := \bigoplus_{s \in \Z}  A^+_s & \mathbb B^+ := \bigoplus_{s\in \Z} B^+\times \{s\}.
\end{align}

Since $B^+$ is a quotient complex of $A^+_s$ for any $s$, we have a projection map $v^+_s: A_s^+\to B^+$.
In \cite{OzsvathSzaboKnotFloer} it is shown the existence of a chain homotopy equivalence $C/\{\cF_j(x)<0\}\overset{\simeq}{\to} C/\{\cF_i(x)<0\}$. We can use it to construct a chain map $h_s^+: A_s^+\to B^+$ defined as the composition of the projection $A_s^+\to C/\{\cF_j(x)<s\} $
with multiplication by  $U^{-s}: C/\{\cF_j(x)<s\}\to C/\{\cF_j(x)<0\}$ composed with the above-mentioned chain homotopy equivalence.

Now for $n\in \Z\setminus \{0\}$, we define $ \mathbb D^+_{n}: \mathbb{A}^+\to \mathbb{B}^+$
\begin{align*}
  &  \mathbb D^+_n \left((a_s)_{s\in \Z}\right) = (b_s)_{s\in \Z}\\
 & b_s = h^+_{s-n}(a_{s-n})  + v_s^+(a_s).
\end{align*}

\cite[Thm. 1.1]{OzsvathSzaboMappingCone} asserts that for any $n\neq 0$ the homology of the mapping cone of $\mathbb{D}^+_n$ 
is isomorphic to $HF^+(\SS^3_n(K))$. The case of interest to us is when $n=-1$, in order to compute $H(Cone(\mathbb{D}^+_{-1}))$, we first compute  $H(B^+)$ and $H(A^+_s)$.

A direct computation using the complex in Figure~\ref{Fig:Complex-11Infty} shows that 
the homologies  are given by:
\begin{align*}
&H(B^+) = \mathcal T^+[x_1+x_5+x_9] & & H(A^+_0) =\mathcal T^+ [x_1+x_5+x_9]& \\
&H(A^+_1) = \mathcal T^+ [x_1+x_5+x_9] \oplus \F [ x_{9}]  & &H(A^+_s) = \mathcal T^+ [x_1+x_5+x_9] \text{ for $s\geq 2$ }& \\
&H(A^+_{-1}) = \mathcal T^+ U[x_1+x_5+x_9] \oplus \F U[x_1] & &   H(A^+_s) = \mathcal T^+ U^{s}[x_1+x_5+x_9] \text{ for $s\leq -2$ }.& \\
\end{align*}

We will compute $H(Cone(\mathbb D^+_{-1}))$ using the exact sequence 
\begin{equation*}
0\to \mathrm{coker} (\mathbb D^+_{-1})_*\to H(Cone(\mathbb D^+_{-1})) \to \ker(\mathbb D^+_{-1})_*\to 0.
\end{equation*} Where $(D^+_{-1})*:H(\mathbb A^+)\to H(\mathbb{B}^+) $ is the map induced
in homology by $D^+_{-1}$. Computing $(\mathbb D^+_{-1})*$ boils down to undestand the map $(h^+)_*:H(A_s^+)\to H(B^+)$, which in principle requires to understand the chain homotopy equivalence $\Phi_{ji}: C/\{\cF_j(x)<0\}\overset{\simeq}{\to} C/\{\cF_i(x)<0\}$.  In our case, it is enough to observe the following things.
Firstly, the generators $[x_9]\in H(A_{1}^+)$ and $U[x_1]\in H(A_{-1}^+)$  are killed by $h^+$ before applying $\Phi_{ji}$.
Secondly, the homology of the complex shown in Figure~\ref{Fig:Complex-11Infty} is generated by $[x_1+x_5+x_9]$
hence the homotopy equivalence  sends it to $[x_1+x_5+x_9]$.
This allows us to compute the maps $h_s^+, v_s^+$ in homology obtaining: 

\begin{tikzcd}
H(A^+_k)\arrow[d, "\simeq"]  \arrow[r, "v_k^+"] & H(B^+) \\
	\mathcal{T}^+ \oplus \F^{n(k)} \arrow[r, "1\oplus 0"]  & 	\mathcal{T}^+ \arrow[u, "\simeq"]
\end{tikzcd}
\begin{tikzcd}
H(A^+_k)\arrow[d, "\simeq"]  \arrow[r, "h_k^+"] & H(B^+) \\
	\mathcal{T}^+ \oplus \F^{n(k)} \arrow[r, " U^k"]  & 	\mathcal{T}^+ \arrow[u, "\simeq"]
\end{tikzcd}
for $k\geq 0$, 

\begin{tikzcd}
H(A^+_k)\arrow[d, "\simeq"]  \arrow[r, "v_k^+"] & H(B^+) \\
	\mathcal{T}^+ \oplus \F^{n(k)} \arrow[r, "U^{|s|}"]  & 	\mathcal{T}^+ \arrow[u, "\simeq"]
\end{tikzcd}
\begin{tikzcd}
H(A^+_k)\arrow[d, "\simeq"]  \arrow[r, "h_k^+"] & H(B^+) \\
	\mathcal{T}^+ \oplus \F^{n(k)} \arrow[r, " 1\oplus 0"]  & 	\mathcal{T}^+ \arrow[u, "\simeq"]
\end{tikzcd}
for $k< 0$, 
where $n(k)$ is the rank of the $U$-torsion part of $H(A^+_k)$. The map $(\mathbb D^+_{-1})_*$ is given by the following diagram.

\begin{tikzcd}
 	\cdots &\cT^+ \arrow[d,"U^2"]\arrow[dl, "1"] & \cT^+\oplus \F  \arrow[d,"U"] \arrow[dl, " 1\oplus 0"] & \cT^+ \arrow[d,"1"] \arrow[dl, "1"] & \cT^+ \oplus \F \arrow[d,"1\oplus 0"] \arrow[dl, " U"] & \cT^+ \arrow[d,"1 "] \arrow[dl, " U^2"]& \cdots \arrow[dl," U^3 "] \\
 	\cdots &\cT^+ & \cT^+ & \cT^+ & \cT^+ & \cT^+& \cdots 	\\
 	 \cdots	 & -2      &    -1         &0         & 1& 2 & \cdots\\
 \end{tikzcd}

Thus $\ker (\mathbb D^+_{-1})_* \simeq \F^2$ generated by $[x_9]\in H(A_1^+)$ and $U[x_1]\in H(A_{-1}^+)$ and 
$\coker (\mathbb \mathbb D^+_{-1})_* \simeq \cT^+$ generated by $y\oplus y \in H(B_{0}^+)\oplus H(B_1^+)$ where $y := [x_1+x_5+x_9]$.
 Consequently $HF^+(Y_{-1, 1, \infty})\simeq H(Cone(\mathbb D))\simeq \cT^+_{(0)}\oplus \F^2_{(-1)}$. To check that the
grading is correct notice that since $Y_{-1, 1, \infty}$ bounds a contractible $4$-manifold, the cornerstone
of the tower lies in degree $0$. Moreover $(\mathbb D^+_{-1})_*$ has degree $-1$ and $x_1,x_5,x_9$ have same bidegree in $CFK^\infty(K(1,\sigma))$ therefore the generators of the reduced part, $[x_9]$ and $[x_1]$, have grading equal to the cornerstone of the tower minus one.
This concludes the proof of Claim~\ref{claim:HFY-11infty}.

\section{Applications to complexity of $h$-cobordisms.}\label{Sec:ApplicationComplexity}
\subsection{Complexity, isometries and protocorks.}  Let $X_0$ and $X_1$ be simply-connected, closed, $4$-manifolds with intersection form  $Q_{X_0}$ and $ Q_{X_1}$. Isomorphism classes of $h$-cobordisms $X_0\to X_1$ are in bijection with isometries $Q_{X_0}\to Q_{X_1}$ \cite{Kreck2001}, thus in order to construct an $h$-cobordism it is sufficient to specify an isometry of the intersection lattice.

To an $h$-cobordism $Z:X_0\to X_1$, Morgan and Szab\'o associate a non-negative even number $\mathcal C(Z)$ called \emph{complexity} of $Z$, for a precise definition we refer to \cite{MorganSzabo99}.  Roughly, an $h$-cobordism as above, admits  \emph{normal} handle decompositions $\mathcal {D}$ with only handles of index $2$ and $3$ (see \cite[Sec. 4.1]{LaduUniversal}), the complexity of $\mathcal{D}$ is the number of excess intersections between the attaching spheres of the $3$-handles and the belt spheres of the $2$-handles in the middle level of the $h$-cobordism. $\mathcal C(Z)$ is then defined as the minimum
of the complexities among all normal handle decompositions of $Z$.

If $\mathcal{C}(Z) = 0$, then the handles cancel out and $Z$ is diffeomorphic to a cylinder. 
Morgan-Szab\'o's complexity should not be confused with the \emph{stabilization}-complexity which instead is given by the
minimal  number of $2$-handles, among normal handle decompositions.

A normal handle decompositions $\mathcal D$ for $Z$ has an associated bipartite plumbing graph $\Gamma$,  satisfying certain properties \cite[Def. 2.1]{Ladu}, representing the intersections of attaching/belt spheres of the $3$/$2$-handles.
$\mathcal D$ relates the two ends $X_0$ and $X_1$ by a cut and paste operation called \emph{protocork twist} \cite{Ladu}. The latter accounts to embed a $4$-manifold with boundary, the protocork $P_0(\Gamma)$, in $X_0$,  then removing it  and gluing in its place the so called reflection $P_1(\Gamma)$.  If $\Gamma$ has a certain symmetry then $P_1(\Gamma)\simeq P_0(\Gamma)$ and this operation is equivalent to glue back in  $P_0(\Gamma)$ via an involution $\tau \in \mathrm{Diff}(\partial P_0(\Gamma))$. 
This  happens also in the case of the protocork $P_0$ in Figure~\ref{Pic:P_0},  in this case there is only a pair of attaching and belt spheres
and they intersect thrice with signs $+,-,+$.

The isomorphism type of the protocork depends only on the isomorphism type of $\Gamma$.  Hence there are only a \emph{finite} number of protocorks with associated complexity lower than a given number. 
Here we are implicitly discarding graphs where intersections allowing to geometrically cancel a pair of handles occour.
It is key to our argument that there is only one such graph with complexity $2$. In particular if $\mathcal{C}(Z) =2$ then $X_0$ and $X_1$ are related by a protocork twist using $P_0$,  the main subject of this paper.

\subsection{Proof of Theorem~\ref{thm:complexity}.}
Let $X$ the elliptic surface $E(1)\simeq \CP^2\#9\bar\CP^2$. The regular fiber $F\subset X_0$, is contained in the Gompf nucleus $N(1)$ \cite[pg .71]{GompfStipsicz}, that we will denote by $N$, which splits $X = M \bigcup N$ where  $M$ is the complement of $N$.

Recall that $\pi_1(N) = 1$, $\pi_1(N\setminus F) = 1$, $\partial N$ is an homology sphere and $H_2(N)\simeq \Z\langle [F], [S]\rangle  $ where  $S\simeq \CP^1$  is the section of the elliptic fibration $E(1)$. With respect to these generators, $N$ has intersection form  $Q_N \simeq \begin{bmatrix}
																										0 & 1\\
																										1 & -1\\
																									\end{bmatrix}$. This gives a splitting $Q_X \simeq -E_8 \oplus Q_N$ indeed $Q_M \simeq -E_8$.
					
Let $\tilde N$ be the manifold obtained by performing either a logarithmic transform or Fintushel-Stern's knot surgery \cite{FintushelSternKnotSurgery}	on the regular fiber $F\subset N$. Then $N$ and $\tilde N$ are homeomorphic rel $\partial N$ \cite{boyer_1986} and in particular  $Q_{\tilde N}\simeq Q_N$.

 By construction, $\torus^3 \simeq \partial \nu F  $, splits $ N = N_1 \bigcup_{\torus^3} \nu F$ and $\tilde N = N_1\bigcup_{\torus^3} \tilde N_2$, where $\tilde N_2$ is homotopy equivalent to $\nu F$  relatively to the boundary \cite{KimRuberman2008} and $\pi_1(N_1) =1$.
Let $\partial: H_2(\tilde N)\to H_1(\torus^3)$ be  the connecting morphism in the Mayer-Vietoris sequence.

\begin{lemma}\label{lemma:IsometryRestrictNonZero}
There is a basis $H_2(\tilde N) \simeq \Z\langle [\tilde F], [\tilde S]\rangle$ such that 
																							$Q_{\tilde{N}} \simeq \begin{bmatrix}
																										0 & 1\\
																										1 & -1\\
																									\end{bmatrix}$,  and in addition,
																									$[\tilde F] \in Im(H_2(\torus^3)\to H_2(\tilde N))$,
																									 $\partial [\tilde S]$ generates $\ker (H_1(\torus^3)\to H_1(\tilde N_2))$
																								and $\partial[\tilde F] = 0$
\end{lemma}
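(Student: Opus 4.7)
\emph{Proof plan.} The plan is to apply the Mayer--Vietoris long exact sequence to the decomposition $\tilde N = N_1 \cup_{\torus^3} \tilde N_2$ and read off the required basis from it. Since $\pi_1(N_1)=1$ and $\partial \tilde N$ is a homology sphere, $\tilde N$ is simply connected and Lefschetz duality gives $H_3(\tilde N)=0$, so the sequence collapses to
\begin{equation*}
0 \to H_2(\torus^3) \to H_2(N_1) \oplus H_2(\tilde N_2) \to H_2(\tilde N) \xrightarrow{\partial} H_1(\torus^3) \to H_1(\tilde N_2) \to 0.
\end{equation*}
Because $\tilde N_2$ is homotopy equivalent rel boundary to $\nu F \simeq \torus^2 \times D^2$, the group $H_2(\tilde N_2) \simeq \Z$ is generated by the new fiber and $H_1(\torus^3)\to H_1(\tilde N_2)$ is the projection killing the meridian $\mu_F$ of $F$; hence $\ker(H_1(\torus^3)\to H_1(\tilde N_2)) = \Z\langle \mu_F\rangle$.

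I will define $[\tilde F]\in H_2(\tilde N)$ as the image of the generator of $H_2(\tilde N_2)$ under inclusion. From the Mayer--Vietoris description, the fiber class $[\torus^2\times\{\mathrm{pt}\}]\in H_2(\torus^3)$ already hits the generator of $H_2(\tilde N_2)$, so $[\tilde F]$ lies in the image of $H_2(\torus^3)\to H_2(\tilde N)$ and $\partial[\tilde F]=0$. Exactness together with the kernel identification above then provides $[\tilde S]\in H_2(\tilde N)$ with $\partial[\tilde S]=\mu_F$.

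To pin down intersection numbers, I will observe that $[\tilde F]$ is represented by the new fiber $\torus^2\times\{0\}\subset \torus^2\times D^2\simeq \tilde N_2$, which is disjoint from its parallel push-offs, so $[\tilde F]^2=0$. Since $\partial [\tilde S]=\mu_F$ is a meridian linking $F$ geometrically once, a geometric representative of $[\tilde S]$ built by pushing a Seifert surface for $\mu_F$ into $N_1$ meets $[\tilde F]$ transversely in a single point, giving $[\tilde F]\cdot[\tilde S]=\pm 1$; reversing the sign of $[\tilde S]$ if needed, I may take the value to be $+1$. This in turn forces $[\tilde F]$ to be primitive and hence to generate the rank-one free group $\ker \partial$; the short exact sequence $0\to \ker\partial\to H_2(\tilde N)\to \mathrm{Im}(\partial)\to 0$ then shows $\{[\tilde F],[\tilde S]\}$ is a $\Z$-basis of $H_2(\tilde N)$.

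It remains to normalize $[\tilde S]^2$. Writing $[\tilde S]^2=k$, the already-used isomorphism $Q_{\tilde N}\simeq Q_N$, combined with the fact that $Q_N$ is an odd form (since it has a $-1$ on the diagonal), forces $k$ to be odd. Writing $k=2m-1$ and replacing $[\tilde S]$ by $[\tilde S]-m[\tilde F]$ leaves $\partial[\tilde S]=\mu_F$, $\partial[\tilde F]=0$, and $[\tilde F]\cdot [\tilde S]=1$ unchanged while normalizing $[\tilde S]^2=-1$, as required. The main delicate point is confirming primitivity of $[\tilde F]$ and ruling out the even hyperbolic form on $H_2(\tilde N)$; both are handled by the geometric linking calculation combined with the parity input from $Q_N$.
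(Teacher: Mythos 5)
Your proposal is correct and runs on the same skeleton as the paper's proof: both use Mayer--Vietoris for $\tilde N = N_1\cup_{\torus^3}\tilde N_2$, take $[\tilde F]$ to be the image of the generator of $H_2(\tilde N_2)$, obtain the second class as a $\partial$-preimage of the generator of $\ker(H_1(\torus^3)\to H_1(\tilde N_2))$, and finish with the parity argument (the form is congruent to the odd form $Q_N$) plus the shear $[\tilde S]\mapsto[\tilde S]-m[\tilde F]$. The genuine difference is the middle step: the paper computes no geometric intersection number --- it first asserts that $[\tilde F],\sigma$ form a basis and then extracts $[\tilde F]\cdot\sigma=\pm1$ from $[\tilde F]^2=0$ together with $\det Q_{\tilde N}=-1$; you instead establish $[\tilde F]\cdot[\tilde S]=\pm1$ by a linking/intersection argument and use it to deduce primitivity of $[\tilde F]$ and hence the basis statement, using the congruence with $Q_N$ only for parity. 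Your route makes explicit the basis claim that the paper treats tersely, at the price of a geometric step that needs care: the representative of $[\tilde S]$ must be closed up inside $\tilde N_2$, and one should note that any two classes with the same image under $\partial$ pair equally with $[\tilde F]$, since $\ker\partial$ has rank one and $[\tilde F]^2=0$. Two small slips to repair: $\tilde N_2$ is only homotopy equivalent rel boundary to $\nu F$, not diffeomorphic to $\torus^2\times D^2$ (this fails for knot surgery), so justify $[\tilde F]^2=0$ not via parallel push-offs in $\torus^2\times D^2$ but from what you already have, namely $[\tilde F]\in \mathrm{Im}(H_2(\torus^3)\to H_2(\tilde N))$ and the fact that a class carried by an embedded $3$-manifold has vanishing self-intersection; and the claim that $\pi_1(N_1)=1$ alone forces $\tilde N$ to be simply connected needs the additional input that the image of $\pi_1(\torus^3)$ normally generates $\pi_1(\tilde N_2)$ --- though this is harmless, since the only exactness you actually use is at $H_2(\tilde N)$ and $H_1(\torus^3)$, where $H_1(N_1)=0$ suffices.
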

\begin{proof}
	
		Let $[\tilde F]\in H_2(\tilde N)$ be the image of the generator of $H_2(\tilde N_2)\simeq H_2(\nu F)\simeq \Z$, notice that $H_2(\tilde N_2)\to H_2(\tilde N)$ is injective as $\pi_1(N_1) =1 $, hence $[\tilde F]\neq 0$.
		Moreover $H_2(\torus^3)\to H_2(\tilde N_2)$ is surjective thus $[\tilde F] \in Im(H_2(\torus^3)\to H_2(\tilde N))$.
		
		 By construction of $\tilde N_2$,  $\ker (H_1(\torus^3)\to H_1(\tilde N_2)) \simeq \Z$, let $\sigma \in H_2(\tilde N)$ be such that
		$\partial \sigma  $ is a generator of such cyclic group. The existence of $\sigma$ is guaranteed by Mayer-Vietoris.
		Since $\partial [\tilde F] = 0$		and $H_2(\tilde N)\simeq \Z^2$, $[\tilde F],\sigma$ is a basis for $H_2(\tilde N)$.
		The intersection  matrix  in this basis takes the form $\begin{bmatrix}
																										0 & \pm 1\\
																										\pm 1 & 2k-1\\
																									\end{bmatrix}$ for some $k\in \Z$. Indeed $[\tilde F]^2 = 0$, and the matrix is symmetric  and congruent to $Q_N$ hence the determinant is $-1$ and $\sigma^2$ is odd. 
		Setting $[\tilde S] :=  (\sigma\cdot\tilde F)\sigma - k [\tilde F]$ we conclude.
\end{proof}

Let $\hat\Phi:H^2(N)\to H^2(\tilde N)$ be the isometry given by Lemma~\ref{lemma:IsometryRestrictNonZero} and extend it to an isometry $\Phi: H^2(X)\to H^2(\tilde X)$ by taking direct product
with an isometry of $-E_8$. By a slight abuse of language we will denote by $\Phi$ also the associated bijection between \spinc structures.

We endow $X$ with the homology orientation given  by $H\in H^2(X)$, the class of an hyperplane and we give $\tilde X$ the homology orientation induced by $\Phi(H)$.
Let $\sstruc \in Spin^c(X)$, such that $\langle c_1(\sstruc),[F]\rangle\neq 0$ and the formal dimension is positive: $d(\sstruc)>0$.
We adopt the notation of \cite{KM}, in particular denoting as $\fm_\pm(u^{d(\sstruc)/2}| \ X,\sstruc)$ the chamber dependent Seiberg-Witten invariant \cite[Sec. 27.5]{KM}.

\begin{lemma} \label{lem:SameInvariants} Under the above hypothesis,	$\fm_{\pm}(u^{d(\sstruc)/2} | \ X,\sstruc) = \fm_{\pm}(u^{d(\sstruc)/2}| \ \tilde X, \Phi(\sstruc))$. More generally if $\Phi: H^2(N)\oplus H^2(M\# m \bar\CP^2)\to  H^2(\tilde N)\oplus H^2(M\# m \bar\CP^2)$
	is an isometry given by a direct sum of $\hat \Phi$ with an isometry of $-E_8 \oplus (-I)^{m}$, then 
	$\fm_{\pm}(u^{d(\sstruc)/2} | \ X\#m \bar\CP^2,\sstruc) = \fm_{\pm}(u^{d(\sstruc)/2}| \ \tilde X\#m \bar\CP^2, \Phi(\sstruc))$
	provided $\langle c_1(\sstruc),[F]\rangle\neq 0$ and $d(\sstruc)>0$.
\end{lemma}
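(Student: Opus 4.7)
The plan is to prove the lemma in two stages. First, I would reduce to the case $m=0$ by iterating the blow-up formula for chamber-dependent Seiberg-Witten invariants, which gives $\fm_\pm(u^{d/2}\mid X\#\bar\CP^2,\sstruc\#\sstruc_0) = \fm_\pm(u^{d/2}\mid X,\sstruc)$ whenever $\sstruc_0$ is one of the two canonical \spinc structures on $\bar\CP^2$ with $c_1^2=-1$. Because $\Phi$ splits by hypothesis as $\hat\Phi\oplus\psi$ with $\psi$ an isometry of $-E_8\oplus(-I)^m$ that acts on the $\bar\CP^2$-factors by permutation and sign change, the blow-up formula applies identically on both sides and reduces the problem to the bare case $X$ versus $\tilde X$.

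With $m=0$ the manifolds $X = M\cup_{\partial N} N$ and $\tilde X = M\cup_{\partial N}\tilde N$ differ only inside the nucleus, and the passage $N\leadsto\tilde N$ is a torus surgery along the regular fiber $F$: either a multiplicity-$p$ logarithmic transform or a Fintushel-Stern knot surgery. I would invoke the Fintushel-Stern / log-transform torus surgery formula in the $b^+=1$ chamber-dependent setting, exploiting the crucial hypothesis $\langle c_1(\sstruc),[F]\rangle \neq 0$, which forces $\sstruc$ to restrict to a \emph{non-torsion} \spinc structure on $T^3 = \partial\nu F$. Conceptually, one stretches the neck along $T^3$; because $\HMfrom_\bullet(T^3,\sstruc|_{T^3})$ vanishes for non-torsion \spinc structures, the gluing pairing collapses onto the reducible contribution alone, and the two sides become a pairing of the same relative invariant of the common piece $M\cup N_1$ against the relative invariants of $(\nu F,\partial)$ and $(\tilde N_2,\partial)$ respectively. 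Lemma~\ref{lemma:IsometryRestrictNonZero} identifies the \spinc data on $M\cup N_1$ across $\Phi$ and matches $[F]$ with $[\tilde F]$ in a way compatible with the surgery, so since $\tilde N_2$ is homotopy equivalent to $\nu F$ rel boundary the two relative contributions coincide. This gives $\fm_+(u^{d/2}\mid X,\sstruc) = \fm_+(u^{d/2}\mid \tilde X,\Phi(\sstruc))$, and the analogous equality for $\fm_-$ follows because the wall-crossing term $\fm_+ - \fm_-$ depends only on $Q_X$ and $c_1(\sstruc)$, both preserved by $\Phi$.

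The main obstacle will be making the torus surgery formula rigorous in the $b^+ = 1$ chamber-dependent setting: one must verify that the period points and chamber structures on $X$ and $\tilde X$ correspond correctly under $\Phi$, and that the Fintushel-Stern / log-transform formula, in the small-perturbation formulation, applies separately in each chamber. The condition $d(\sstruc)>0$ is used here to ensure the relevant moduli spaces are positive-dimensional so that the $u^{d/2}$-insertion picks out the expected contribution. Once this chamber bookkeeping is under control, the rest of the argument is a straightforward application of Lemma~\ref{lemma:IsometryRestrictNonZero} and the blow-up formula.
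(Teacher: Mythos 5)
The central step of your $m=0$ argument is the assertion that, after stretching along $\torus^3$, the contributions of $(\nu F,\partial)$ and $(\tilde N_2,\partial)$ coincide ``since $\tilde N_2$ is homotopy equivalent to $\nu F$ rel boundary''. This is a genuine gap: relative Seiberg--Witten invariants are not homotopy invariants of the filling --- if they were, logarithmic transforms and Fintushel--Stern knot surgery could never change the invariants of the closed manifold, contradicting the knot surgery formula $SW_{\tilde X}=SW_X\cdot\Delta_K(t)$, which is exactly the phenomenon at stake here. Your preliminary observation also points in a different direction than you use it: if $\HMfrom_\bullet(\torus^3,\sstruc|_{\torus^3})$ vanishes because the restriction is non-torsion, the gluing pairing does not ``collapse onto the reducible contribution''; it is simply zero, and no comparison of the two fillings is needed at all. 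The paper's proof exploits precisely such a vanishing, but in a sharper form: it applies the gluing formula \cite[Prop. 27.5.1]{KM} with the nonexact local coefficient system $\Gamma_\eta$, $\eta=[S\cap\partial\nu F]\neq 0$, for which $\HMfrom_\bullet(\torus^3;\Gamma_\eta)\cong\R$ is annihilated by $U$; since $d(\sstruc)>0$ there is at least one $U$-insertion, so the pairing, and hence $\fm_k(u^{d(\sstruc)/2}\,|\,X,\sstruc+nk)$ for every $n$, vanishes, and the same argument run on $\tilde X$ (using Lemma~\ref{lemma:IsometryRestrictNonZero} to keep $\Phi([S])\cap\torus^3$ homologically nontrivial) gives vanishing there too. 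This is the true role of the hypothesis $d(\sstruc)>0$, which in your sketch is invoked only vaguely. The passage to $\fm_\pm$ is then as you suggest --- the discrepancy between chambers is homological --- but it must be run from a chamber in which the gluing formula actually computes, namely the invariant $\fm_k$ of \cite[pg. 577]{KM} adapted to the splitting; this chamber bookkeeping is exactly what you flag as ``the main obstacle'' and leave open.

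Your preliminary reduction to $m=0$ is also not available at the stated level of generality: the lemma allows an arbitrary $\sstruc$ on $X\#m\bar\CP^2$ subject only to $\langle c_1(\sstruc),[F]\rangle\neq 0$ and $d(\sstruc)>0$, and its restriction to a $\bar\CP^2$ summand need not have square $-1$, so the elementary blow-up formula you quote does not apply; moreover the $b^+=1$ blow-up formula is itself chamber dependent, so this step would re-import the same bookkeeping problem. In the paper no such reduction is made: the $\bar\CP^2$ summands are attached away from the nucleus, so they merely enlarge the piece $M_1$ and the gluing argument along $\torus^3$ applies verbatim to $X\#m\bar\CP^2$.
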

\begin{proof}
	Let $k = PD[F] \in H^2(N)$ and set $M_1 := M \cup N_1$ so $X = M_1\cup N_2$ where $N_2 := \nu F$ and $\tilde X = M_1\cup \tilde N_2$. Two preliminary observations: firstly by  Mayer-Vietoris, we get that $Im(H^1(\torus^3)\to H^2(X)) = \Z k$
	hence the set of \spinc structures on $X$ restricting to two given \spinc structures on $M_1$ and $\nu F$ is an affine space over $\Z k$.
	Secondly $[S\cap \partial \nu F] \neq 0 \in H_1(\torus^3)$, we set $\eta:=[S\cap \partial \nu F]$.
	
	We will use the invariants $\fm_k(X,\sstruc)$ as defined in \cite[pg. 577]{KM}, for such invariants, the gluing formula \cite[Prop.27.5.1]{KM} gives:
	\begin{equation}\label{eq:SumMinv}
	\sum_{n\in \Z} \fm_k(u^{d(\sstruc)/2} | \ X,\sstruc + nk) e^{\langle c_1(\sstruc) + nk, [S]\rangle} = \langle U^{d(\sstruc)/2} \psi_{M_1, \sstruc|_{M_1}, S\cap {M_1}}, \psi_{N_2, \sstruc|_{N_2}, S\cap {N_2}}\rangle_{\omega} = 0,
	\end{equation}
	where on the RHS,  we have the pairing of the two relative invariants of $M$ and $N_2 = \nu F$, in $\HMfrom_\bullet(\torus^3; \Gamma_\eta)$.
	This is equal to zero because  the latter group is isomorphic to $\R$ \cite[Prop. 3.10.2]{KM}, hence annihilated by $U$.

	Since $\langle c_1(\sstruc),[F]\rangle\neq 0$, the series on the LHS is finite and, since \eqref{eq:SumMinv} holds for any positive multiple of $S$,  we can 	isolate the addenda obtaining $ \fm_k(u^{d(\sstruc)/2} | X,\sstruc + nk) = 0$ for all $n$.

	Considering $\tilde X$, $\Phi(\sstruc)$, $\Phi(k)$ and $\Phi([S])$, it still holds that $[\Phi([S])\cap \torus^3] \neq 0$ and $\Phi(PD(k))\neq 0   \in Im(H_2(\torus^3)\to H_2(\tilde X)) $ thanks to Lemma~\ref{lemma:IsometryRestrictNonZero}, hence we can apply again the gluing formula obtaining
	
	\begin{equation}
	\sum_{n\in \Z} \fm_{\Phi(k)}(u^{d(\sstruc)/2} | \ \tilde X,\Phi(\sstruc + nk)) e^{\langle \Phi(c_1(\sstruc) + nk), \Phi([S])\rangle}  = 0
	\end{equation}
	and thus $ \fm_{\Phi(k)}(u^{d(\sstruc)/2} | \tilde X,\Phi(\sstruc + nk)) = 0$  for all $n$.
	
		Now, the invariant $\fm_k(u^{d(\sstruc)/2} | X,\sstruc)$ together with $k$ and the intersection pairing of $X$, determines the invariant $\fm_{\pm}(u^{d(\sstruc)/2} | \ X,\sstruc)$. Since all these are preserved under $\Phi$, the claim follows.
		The same argument, mutatitis mutandis, generalizes to multiple blow-ups.		
		\end{proof}

Now all is set so that we can mimic the construction in \cite{MorganSzabo99}.
Let $m\geq 1$, and define
\begin{align*}
X_{0,m}:= X\# ((2m+1)^2-8)\bar{\CP}^2 & & X_{1,m}:= \tilde X\# ((2m+1)^2-8)\bar{\CP}^2.
\end{align*} 
Define  $\alpha_m = (2m+1)H - \sum_{i} E_i\in H^2(X_{0,m})$ where the sum runs over all the exceptional divisors $E_i$.
Notice that $\alpha_m^2 = -1$. Let $\rho_m:H^2(X_{0,m}) \to H^2(X_{0,m})$ be the reflection in $\alpha_m$
i.e. $\rho_m (x) = x  +2\langle x, \alpha_m\rangle\alpha_m.$

Let $\Phi:H^2(X_{0,m})\to H^2(X_{1,m})$ be an extension of $\hat \Phi$ as in Lemma~\ref{lem:SameInvariants}.
We consider the isometry $ R_m = \Phi\circ \rho_m: H^2(X_{0,m})\to H^2(X_{1,m})$, 
let   $C_m:X_{0,m}\to X_{1,m}$ be the associated $h$-cobordism \cite{Kreck2001}.

\begin{theorem}\label{thm:hCobProved}
	The complexity of the $h$-cobordism $C_m$ diverges as $m\to +\infty$.
	The complexity of $C_2$ is strictly larger than $2$. 
\end{theorem}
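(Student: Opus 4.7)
The plan is to follow the Seiberg-Witten strategy of Morgan-Szab\'o \cite{MorganSzabo99}, using Lemma~\ref{lem:SameInvariants} to bridge their inertial case and our non-inertial setting. The starting observation is that, by Lemma~\ref{lem:SameInvariants}, $\fm_\pm(u^{d(\sstruc)/2}|\,X_{0,m}, \sstruc) = \fm_\pm(u^{d(\sstruc)/2}|\,X_{1,m}, \Phi(\sstruc))$ whenever $\langle c_1(\sstruc), [F]\rangle \neq 0$ and $d(\sstruc)>0$. Therefore any discrepancy between the invariants on $X_{0,m}$ at $\sstruc$ and on $X_{1,m}$ at $R_m(\sstruc) = \Phi(\rho_m(\sstruc))$ is attributable entirely to the reflection $\rho_m$, reducing us to essentially the same computation Morgan-Szab\'o carried out on the inertial side.

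The next step is to exhibit a family $\mathcal{S}_m$ of \spinc structures on $X_{0,m}$ for which $\fm_\pm$ differs between $\sstruc$ and $\rho_m(\sstruc)$. Using the known basic classes of $E(1)\# N\bar\CP^2$, its blow-up formula, and the wall-crossing formula for $b^+=1$, one produces such a family whose cardinality grows with $m$. Lemma~\ref{lem:SameInvariants} then transfers these discrepancies to genuine mismatches between the invariants of $X_{0,m}$ and those of $X_{1,m}$ under $R_m$. Since for each $N$ there are only finitely many protocorks of complexity at most $N$ (up to isomorphism) and each protocork twist modifies the Seiberg-Witten invariants only through an element of a finite-rank reduced Floer module \cite[Thm.~1.4]{Ladu}, the unboundedness of $|\mathcal{S}_m|$ forces $\mathcal{C}(C_m) \to \infty$, as in \cite[Thm.~1.1]{MorganSzabo99}.

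To sharpen this to $\mathcal{C}(C_2) > 2$, I would invoke the uniqueness of the complexity-$2$ plumbing graph: if $\mathcal{C}(C_2) = 2$ then $X_{0,2}$ and $X_{1,2}$ are related by a single $P_0$-twist. The gluing formula along $Y = \partial P_0$ expresses the Seiberg-Witten invariants of $X_{1,2}$ as the pairing of the relative invariant of the complement $M$ of $P_0$ with $\tau_*(x_0) = x_0 + \Delta$, where by Theorem~\ref{thm:HMYTauAction} the correction $\Delta \in \HMfrom_{-1}(Y)$ satisfies $U\cdot \Delta = 0$. Hence the pattern of differences between $\fm_\pm$ on $X_{0,2}$ and on $X_{1,2}$ is extremely rigid: it factors through a single $U$-torsion element and can therefore affect only \spinc structures of one specific formal dimension. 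At $m = 2$ a direct check shows that $\mathcal{S}_2$ does not fit this pattern, yielding the contradiction $\mathcal{C}(C_2) > 2$, hence $\mathcal{C}(C_2) \geq 4$ since $\mathcal{C}$ is always even.

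The main obstacle will be the Seiberg-Witten bookkeeping underlying $\mathcal{S}_m$: chamber structures proliferate after each blow-up, and one must carefully track how $\rho_m$ interacts with them to guarantee genuine rather than illusory discrepancies. Once $\mathcal{S}_m$ is in hand, the complexity-$2$ obstruction is a rather direct application of the $U$-torsion of $\Delta$ coming from Theorem~\ref{thm:HMYTauAction}.
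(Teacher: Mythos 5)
Your reduction via Lemma~\ref{lem:SameInvariants} and your appeal to the $U$-torsion of $\Delta$ for the complexity-$2$ statement are in the spirit of the paper, but the engine of your divergence argument has a genuine gap. You claim that, because a protocork twist changes the invariants ``only through an element of a finite-rank reduced Floer module,'' a bound on $\mathcal{C}(C_m)$ would bound the number of \spinc structures at which the invariants of $X_{0,m}$ and $X_{1,m}$ disagree, so that a family $\mathcal{S}_m$ of unbounded cardinality forces $\mathcal{C}(C_m)\to\infty$. The finite rank of $\HMred(\partial P)$ gives no such bound: the variation at a \spinc structure is a pairing of the difference element $\Delta_P$ with the relative invariant of the complement, and nothing prevents that pairing from being nonzero at arbitrarily many \spinc structures of the complement, whose number grows with $m$. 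What bounded complexity actually controls, via \cite[Corollary 1.2]{Ladu}, is the \emph{formal dimension}: if $U^{\ell}\Delta_P=0$ then no mod $2$ variation can occur for moduli spaces of formal dimension larger than $2\ell$, and since only finitely many protocorks have complexity at most $r$, the torsion order $\ell$ is bounded. Accordingly the paper needs no family $\mathcal{S}_m$ at all (and you never construct yours): it uses the single \spinc structure $\sstruc_m$ with $c_1(\sstruc_m)=\alpha_m$, whose chamber-dependent invariant on the rational surface $X_{0,m}$ equals $1$ while that of the conjugate $\bar\sstruc_m=\rho_m(\sstruc_m)$ vanishes; Lemma~\ref{lem:SameInvariants} transfers this to $X_{1,m}$ exactly as you intend, so $C_m$ carries a variation in formal dimension $d(\sstruc_m)=m^2+m-2\to\infty$, which contradicts the uniform bound $2\ell$. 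Growth of the \emph{number} of discrepancies, which is what your sketch produces, does not by itself contradict bounded complexity.

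For $C_2$ your idea is close to the paper's, but it rests on the unconstructed $\mathcal{S}_2$, and the phrase ``only \spinc structures of one specific formal dimension'' should be the upper bound $d\le 2$ furnished by $U\cdot\Delta=0$ (torsion order $\ell=1$) together with \cite[Corollary 1.2]{Ladu}. The paper then simply observes that the same $\sstruc_2$ as above has $d(\sstruc_2)=4>2$, so a single $P_0$-twist cannot account for the established variation, whence $\mathcal{C}(C_2)>2$ and, by parity, $\mathcal{C}(C_2)\ge 4$. If you replace the cardinality mechanism by this dimension mechanism, your outline becomes the paper's proof.
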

\begin{proof}
	First of all we study the variation of  Seiberg-Witten invariants along the $h$-cobordism $C_m$.
	Let  $\sstruc_m \in Spin^c(X_{0,m})$ be  the \spinc structure with $c_1(\sstruc_m) = \alpha_m$, and let 
$\bar \sstruc_m $ be its conjugate \spinc structure, i.e. $c_1(\bar\sstruc_m) = -\alpha_m$.
Notice that $c_1(\sstruc_m)([F]) \neq 0$ because $[F] =PD(3H - \sum_{i=1}^9 E_i)$,
and $(c_1(\sstruc_m)\cup H)[X_{0,m}] > 0$,
consequently  by Lemma~\ref{lem:SameInvariants}
	\begin{align*}
		1& = \fm_{+}(u^{d(\sstruc_m)/2}| \ X_{0,m}, \sstruc_m) = \fm_{+}(u^{d(\sstruc)/2}|\ X_{1,m}, \Phi(\sstruc_m)), \\ 
		0& = \fm_{+}(u^{d(\sstruc_m)/2}|\ X_{0,m}, \bar\sstruc_m) = \fm_{+}(u^{d(\sstruc)/2}|\ X_{1,m}, \Phi(\bar\sstruc_m)),
	\end{align*}	
where the computation of the invariants on the left follows from   $X_{0,m}$ being a rational surface.

The $h$-cobordism $C_m$ relates the \spinc structures $\sstruc $ and $R_m(\sstruc_m) = \Phi(\bar \sstruc_m)$. Since
\begin{equation}
	\fm_{+}(u^{d(\sstruc_m)/2}| \ X_{0,m}, \sstruc_m)\neq \fm_{+}(u^{d(\sstruc)/2}|\ X_{1,m}, \Phi(\bar\sstruc_m))
\end{equation} 
we observe that along the $h$-cobordism $C_m$, there is a variation of $\mod 2$ Seiberg-Witten invariants with moduli spaces of formal dimension 
$d(\sstruc_m) = m^2+m-2.$

	Now suppose by contraddiction that the complexity of $\{C_m\}_{m\in \N}$ is bounded above by $r\in \N$.
	Let $\mathcal{S}_r$ be the set of isomorphisms classes of protocorks of complexity smaller than $r$.
	$\mathcal{S}_r$ is seen to be a finite set  by enumerating  the possible plumbing graphs.
	Set \begin{equation}
			\ell := \max_{P \in \mathcal{S}_r} \{\text{$U$-torsion order of $\Delta_P$} \in \HMred_{-1}(\partial P)\} \in \N,
			\end{equation}
	where $\Delta_P$ is the difference element of the protocork $P$ \cite[Thm. 1.1]{Ladu}.
	 Then by  \cite[Corollary 1.2]{Ladu},  there cannot be any variation of Seiberg-Witten invariants corresponding to moduli spaces
	 of formal dimension larger than $2\ell$ along the $h$-cobordism $C_m$. 
	 However $d(\sstruc_m) = m^2+m-2 \to +\infty$ as $m\to +\infty$, this proves  the first part of the thesis. 
	 
	 To prove the second part, suppose  by contraddiction that $C_2$ has complexity lower or equal to $2$.
	 Then the two ends are related by a protocork twist using the protocork $P_0$ of Theorem~\ref{thm:HMYTauAction}.
	 By Theorem~\ref{thm:HMYTauAction}, and \cite[Corollary 1.2]{Ladu}, there cannot be variation in the $\mod 2$ Seiberg-Witten invariants associated
	 with moduli spaces of formal dimension $2$. On the other hand $d(\sstruc_2) = 4$. \end{proof}					
	 
\begin{remark} In \cite[Corollary 1.2]{Ladu} we assume $b^+(X) >1$, however the same proof goes through to cover the case $b^+(X)=1$ using the metric dependent $\HMarrow$-map \cite[pg. 562-566]{KM}.  Indeed the protocork
splits $X$ in $M$ and $P_0$ with $b^+(M)= b^+(X) = 1$ and $b^+(P_0)=0$  \cite[Prop. 2.4 (d)]{Ladu}
and the factorization formula \cite[(3.14)]{KM} holds using the metric dependent  $\HMarrow$-map.
\end{remark}						

\begin{remark} Dolgachev surfaces are obtained by performing two logarithmic transforms on $X$, not just one as in Theorem~\ref{thm:hCobProved}.  If we perform a second  logarithmic transform $\tilde X \leadsto\tilde {\tilde {X}} $ we can construct, in the same manner as we did before, an isometry $\tilde\Phi: H^2(\tilde X)\to H^2(\tilde {\tilde {X}})$ for which Lemma~\ref{lem:SameInvariants} holds. Indeed the only difference is that instead of using the Gompf nucleus $N$, we appeal to $\tilde N$, which has the same properties for the sake of our proof.
At this point it is sufficient to consider $h$-cobordisms induced by the composite $\tilde \Phi\circ R_m$. 
\end{remark}															
 
This proves Theorem~\ref{thm:complexity} except for the claim about the number of such $h$-cobordisms.
This can be done by 	noticing that in the construction of  $\Phi$ we can choose any isometry of $G:=-E_8\oplus (-I)^{(2m+1)^2-8}$.
We just need to find isometries of $G$ leading to distinct maps $\Phi_m=\Phi\circ\rho_m$. 
Noticing that for any $i\geq 10$, $\rho_m(2\alpha_m + E_i) =  E_i $,  and that the isometries of $(-I)^{(2m+1)^2-8}$ are signed permutations, 
we immediately get $2^{(2m+1)^2-8}((2m+1)^2-8)!$ examples. More examples can be constructed exploting the isometries of $H^2(N)$ and $-E_8$ but we decided not to pursue this in this paper.

\section{Proof of Corollary~\ref{cor:StrongCork}}\label{Sec:CorollaryProof}
\begin{proof} With the notation of Corollary~\ref{cor:StrongCork}, since $\HMfrom(Q;\F)$ is an equivariant isomorphism,
\begin{equation}
	\HMfrom_{-1}(Y'; \F)\simeq \hat \cT_{(-1)} x_0'\oplus \F_{(-1)}\Delta' \oplus \F_{(-1)}\alpha',
\end{equation} 
 and the action of $\tau'_*$ can be represented by the matrix on the left in Theorem~\ref{thm:HMYTauAction}.
Now the proof follows the same lines of \cite[Thm. D]{LinRubermanSaveliev2018}. 
Suppose that $C$ is a $\Z/2$-homology sphere bounding $Y'$ such that $\tau'$ extends to $C$ (we will denote the extension by the same name).
Let $\sstruc$ be the unique $\mathrm{spin}$ structure of $C$. 
We have  $ ax_0' + b\Delta' + c\alpha' =\HMfrom(C\setminus \ball, \sstruc; \F)(\hat 1) \in \HMfrom_{(-1)}(Y; \F')$, for some $a,b,c \in \F$. Since $C$ is negative definite, $a \neq 0$.
Since $\tau'$ extends  to $C$ and preserves the $\mathrm{spin}$ structure, $(\tau_*' -id_*) (ax_0' + b\Delta' + c\alpha')= 0 \in \HMfrom_{(-1)}(Y';\F) $. On the other hand, 
$(\tau'_*- id_*)(ax_0' + b\Delta' + c\alpha') = a \Delta' \neq 0$. 
\end{proof}

We claimed that  the Akbulut cork and $(Y_{1,\infty, 1}, \tau_{1,\infty,1})$ satisfy the hypothesis of Corollary~\ref{cor:StrongCork}.
In the case of the Akbulut cork $W_1$, we can attach equivariantly two $2$-handles to $Y$, obtaining a sequence of cobordisms $Y=Y_{0, 0,\infty}\to Y_{0,1,\infty}\to Y_{\infty, 1 \infty} = \partial W_1$ as in Figure~\ref{Pic:SurgerySeq2}.

For the case of $Y_{1, \infty, 1}$, we replace the vertical sequence of Figure~\ref{Pic:SurgerySeq2} with Figure~\ref{Fig:SeqForY1Infty1}.

\begin{figure}
\begin{center}
\includegraphics[scale=0.8]{./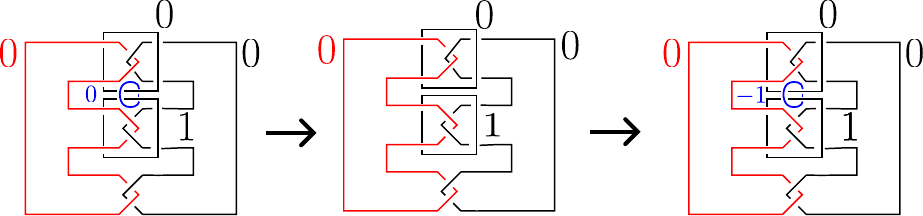}
\end{center}
\caption{\label{Fig:SeqForY1Infty1} Sequence in a Floer exact triangle $\SS^3\to Y_{0,1,\infty} \to Y_{1,\infty, 1}$. The isomorphisms with $\SS^3$ and $Y_{1,\infty,1} $ become apparent once the $1$-framed knot  is slid over the $0$-framed one thus unlinking it from the blue one. }
\end{figure}

\appendix
\section{Algebraic lemma.}\label{Appendix}
\newcommand{\FF}{\Lambda}
\newcommand{\FU}{\FF[[U]]}
Let $\FF$ be a field or $\Z$ and  $M_1, M_2, M_3$ be  finitely generated $\FU$ modules consisting of $U$-torsion elements. 
For $m\geq 0$,  $\hat R := \oplus_{i=1}^m \FU$ and $\bar R :=  \oplus_{i=1}^m \FF[[U, U^{-1}]$, denote by $p_*: \hat R \to \bar R$ the obvious $\FU$-monomorphism induced by the inclusion $\FU\to \FF[[U, U^{-1}]$.

Suppose we have the following commutative diagram with exact rows

\begin{equation}\label{LES:1}
	\begin{tikzcd}
		& 	\hat R \oplus M_1 \arrow[r,"\hat f_1"] \arrow[d,"p_*"] 
		& \hat R\oplus \hat R\oplus M_2 \arrow[r, "\hat f_2"] \arrow[d,"p_*"] 
		& \hat R\oplus M_3\arrow[d, "p_*"] \arrow[r, "\hat f_3"]
		& \hat R \oplus M_1\arrow[r, "\hat f_1"] \arrow[d,"p_*"]& \cdots\\		
		& \bar R \arrow[r, "\bar f_1"] 
		& \bar R\oplus \bar R \arrow[r, "\bar f_2"] 
		& \bar R \arrow[r, "\bar f_3"] & \bar R \arrow[r, "\bar f_1"] 
		& \cdots.\\
	\end{tikzcd}	
	\end{equation}

\begin{lemma}\label{lem:BarF_3Zero}
$\bar f_1$ is injective, $\bar f_2$ is surjective and  $\bar f_3 = 0$. 
\end{lemma}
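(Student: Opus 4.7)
The plan is to reduce all three claims to a rank count over a field. The only preliminary observation is that, since $U$ acts invertibly on every term of the bottom row, each $\FU$-linear map $\bar f_i$ is automatically linear over the larger ring $\FF[[U,U^{-1}]$. Moreover, every bottom term is a free $\FF[[U,U^{-1}]$-module of finite rank ($m$ or $2m$).

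Next, I would pass to a field $K$ over which everything becomes a finite-dimensional vector space, preserving exactness. If $\FF$ is a field, then $K := \FF[[U,U^{-1}]$ is itself a field (the formal Laurent series field) and no tensoring is needed. If $\FF = \Z$, I first tensor the bottom row with $\Q$ over $\Z$; flatness of $\Q$ keeps the sequence exact, landing in $K := \Q((U))$-vector spaces. In either case the bottom row becomes a long exact (period three) sequence of $K$-vector spaces whose terms have dimensions
\begin{equation*}
\dots \to K^{m} \xrightarrow{\bar f_1} K^{2m} \xrightarrow{\bar f_2} K^{m} \xrightarrow{\bar f_3} K^{m} \to \cdots .
\end{equation*}

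The crux is then a direct dimension computation. Writing $a, b, c$ for the $K$-dimensions of the images of $\bar f_1, \bar f_2, \bar f_3$ (after tensoring), rank-nullity together with the exactness identities $\ker = \mathrm{im}$ around each of the three nonzero positions yield the linear system
\begin{equation*}
a + c = m, \qquad a + b = 2m, \qquad b + c = m,
\end{equation*}
whose unique solution is $(a,b,c) = (m,m,0)$. Thus $\bar f_3 \otimes_{\FF[[U,U^{-1}]} K = 0$. Since each $\bar R$ is torsion-free over $\FF[[U,U^{-1}]$ (and, when $\FF = \Z$, also $\Z$-torsion-free), the natural map $\bar R \hookrightarrow \bar R \otimes K$ is injective, and the vanishing upgrades to $\bar f_3 = 0$ on the nose. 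The remaining two claims are then free consequences of exactness of the original bottom row at the two $\bar R$'s flanking $\bar f_3$: $\ker \bar f_1 = \mathrm{im}\,\bar f_3 = 0$ forces $\bar f_1$ injective, and $\mathrm{im}\,\bar f_2 = \ker \bar f_3 = \bar R$ forces $\bar f_2$ surjective.

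I do not anticipate a genuine obstacle here; the only technical points worth verifying carefully are the automatic $\FF[[U,U^{-1}]$-linearity of the bottom maps (immediate from the invertibility of $U$ on the target) and the $\Q$-flatness step in the $\FF = \Z$ case (standard). Everything else is a three-equations-in-three-unknowns computation.
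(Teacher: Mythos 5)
Your proposal follows essentially the same route as the paper's proof: reduce everything to a rank count over a field obtained by a flat base change, then (in the integral case) descend the vanishing of $\bar f_3$ using torsion-freeness, and deduce the other two claims from exactness. For $\Lambda$ a field your argument is complete, and in fact phrased a bit more cleanly than in the paper: since $U$ acts invertibly on each bottom term, the maps $\bar f_i$ are automatically linear over the Laurent series field $\Lambda[[U,U^{-1}]$, and your three rank--nullity equations force the ranks $(m,m,0)$. The one technical slip is in the $\Lambda=\mathbb{Z}$ case: $\mathbb{Z}[[U,U^{-1}]\otimes_{\mathbb{Z}}\mathbb{Q}$ is \emph{not} $\mathbb{Q}((U))$ --- it is the subring of Laurent series with bounded denominators, which is not a field and is not even a $\mathbb{Q}((U))$-module --- so after tensoring with $\mathbb{Q}$ the terms are not finite-dimensional $\mathbb{Q}((U))$-vector spaces and the dimension count does not literally apply. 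The repair stays entirely within your argument: $\mathbb{Z}[[U,U^{-1}]$ is an integral domain, so tensor the exact bottom row over it with its fraction field $F$ (a localization, hence flat) to obtain an exact sequence of $F$-vector spaces of dimensions $m,2m,m,\dots$; the same count gives $\bar f_3\otimes F=0$, and since each $\bar R$ is a free $\mathbb{Z}[[U,U^{-1}]$-module the natural map $\bar R\to \bar R\otimes F$ is injective, whence $\bar f_3=0$, after which injectivity of $\bar f_1$ and surjectivity of $\bar f_2$ follow from exactness exactly as you say. This is in substance what the paper does, with $\mathbb{R}$ in place of your $\mathbb{Q}$, so apart from this easily patched base-change misidentification your proof and the paper's coincide.
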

	\begin{proof}
		Suppose first that $\FF$ is a field. Then $\bar R$ is a free finitely generated module over a PID. Consequently, after promoting $\bar f_i$ to a $\FF[U,U^{-1}]$-homomorphism,  the rank of the map  $\bar f_i$ in the exact triangle is determined by the dimension of the   $\FF[[U, U^{-1}]$-modules	in the triangle. In our case this forces $\bar f_3$ to be zero and $\bar f_1$ to be injective and $\bar f_2$ surjective.
		Now suppose that $\FF = \Z$.  Suppose for a contradiction that $Im(\bar f_3) \neq 0$. Then, since $\bar R$ is torsion free,
		$Im(\bar f_3)\otimes \R\neq 0$.  Since $\R$ is a flat $\Z$-module, by tensoring the exact triangle with $\R$, we obtain an exact triangle 
		with $\FF=\R$ a field. Thus by the previous argument  $\bar f_3\otimes id_{\R}: \bar R \otimes \R \to \bar R \otimes \R$  is the zero map
		contradicting that $Im(\bar f_3)\otimes \R\neq 0$.
	\end{proof}

\begin{lemma}\label{lemma:ConclusionFromExactSeq}	Suppose that 
	\begin{equation}\label{eq:hypOnProjectionsExSeq}
	0\to \hat R\overset{pr_{(\hat R)^2} \hat f_1}{\to} (\hat R)^2 \overset {pr_{\hat R}\hat f_2}{\to} \hat R\to 0
	\end{equation} is exact.
Then there is  an exact sequence $M_1\to M_2\to  M_3\to M_1\to\cdots$ obtained  by restricting $\hat f_1, \hat f_2, \hat  f_3$.
\end{lemma}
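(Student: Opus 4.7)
The plan is to first verify that each $\hat f_i$ restricts to a map $M_i \to M_{i+1}$, using commutativity with the bottom row, and then check exactness at each of the three spots $M_1, M_2, M_3$ by a diagram chase that exploits the three-part short exact sequence (\ref{eq:hypOnProjectionsExSeq}). Compatibility of the restrictions with compositions is automatic from $\hat f_{i+1}\hat f_i = 0$, so what remains is exactness.

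For the restriction step, I would observe that $p_*$ vanishes on any $U$-torsion element (since $\bar R^k$ is $U$-torsion free), so for $m \in M_i$ one has $p_*\hat f_i(m) = \bar f_i p_*(m) = 0$, and the injectivity of $p_*$ on the $\hat R^k$ summand of the target forces the $\hat R^k$-component of $\hat f_i(m)$ to be zero. I would also record the strengthening that, because $\bar f_3 = 0$ by Lemma~\ref{lem:BarF_3Zero}, the same argument applied to all of $\hat R \oplus M_3$ gives $\mathrm{pr}_{\hat R} \circ \hat f_3 \equiv 0$, so $\hat f_3$ automatically lands in $M_1$ on its entire domain.

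Exactness is then established by three analogous chases, each drawing on a different part of (\ref{eq:hypOnProjectionsExSeq}). \emph{At $M_2$}: a lift of $x \in \ker(\hat f_2|_{M_2})$ to $(r,m_1) \in \hat R \oplus M_1$ satisfies $\hat f_1(r,0) = x - \hat f_1(m_1) \in M_2$, so its $\hat R^2$-projection vanishes and the injectivity hypothesis forces $r = 0$. \emph{At $M_3$}: a lift $(r_1,r_2,m_2)$ of $x \in \ker(\hat f_3|_{M_3})$ produces $(r_1,r_2) \in \ker(\mathrm{pr}_{\hat R}\hat f_2|_{\hat R^2})$, and middle exactness supplies $s \in \hat R$ with $\mathrm{pr}_{\hat R^2}\hat f_1(s,0) = (r_1,r_2)$; writing $\hat f_1(s,0) = (r_1,r_2,n)$ and applying $\hat f_2 \hat f_1 = 0$ converts the $\hat R$-part of the lift into a correction by $n \in M_2$, yielding $x = \hat f_2|_{M_2}(m_2 - n)$. \emph{At $M_1$}: a lift $(r,m_3)$ of $x \in \ker(\hat f_1|_{M_1})$ is corrected by the preimage of $r$ under the surjection $\mathrm{pr}_{\hat R}\hat f_2|_{\hat R^2}$; if $\hat f_2(r_1,r_2,0) = (r,n)$, then $(r,n) \in \ker \hat f_3$, so $\hat f_3(r,0) = -\hat f_3|_{M_3}(n)$ and $x = \hat f_3|_{M_3}(m_3 - n)$.

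The main obstacle, I expect, is purely bookkeeping: making sure each of the three cases invokes the correct part of (\ref{eq:hypOnProjectionsExSeq}) (injectivity at $M_2$, middle exactness at $M_3$, surjectivity at $M_1$) and that the correction arguments at $M_3$ and $M_1$ split the ambient lift as (a term coming from $\hat R^k$) plus (a term in the torsion summand), with the former absorbed into the latter via $\hat f_{i+1}\hat f_i = 0$. No additional algebraic input is needed beyond Lemma~\ref{lem:BarF_3Zero}, the hypothesis, and the injectivity of $p_*$ on $\hat R^k$-summands.
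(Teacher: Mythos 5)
Your argument is correct and proves exactly what is claimed. It reaches the conclusion by a more direct route than the paper's proof: there, one first uses projectivity of $\hat R$ to fix sections $\sigma_1,\dots,\sigma_m$ of $pr_{\hat R}\circ\hat f_2$ together with the elements $\psi_i$, the images under $\hat f_1$ of the generators of $\hat R$, re-coordinatizes $(\hat R)^2\oplus M_2$ and $\hat R\oplus M_3$ accordingly, and then reads off exactness of $M_1\to M_2\to M_3\to M_1\to\cdots$ from the resulting block matrices $A_1,A_2$ (using $\mathrm{Hom}(M_i,\hat R)=0$ and $\hat f_2\hat f_1=0$ to pin down the blocks). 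You dispense with the basis change and run the chase locally, correcting each lift by a torsion term produced via $\hat f_{i+1}\hat f_i=0$; in effect your per-spot corrections reconstruct on the fly the splitting that the paper fixes once and for all. What your version buys is economy and transparency: there is no need to verify the internal direct-sum decompositions, and it is explicit which third of \eqref{eq:hypOnProjectionsExSeq} enters where (injectivity at $M_2$, middle exactness at $M_3$, surjectivity at $M_1$). Two small remarks: the restriction $\hat f_i(M_i)\subset M_{i+1}$ is automatic, since $M_{i+1}$ is precisely the $U$-torsion submodule of its ambient module, so the $p_*$-argument, while correct, is not needed; and your observation that $\bar f_3=0$ forces $pr_{\hat R}\circ\hat f_3=0$ is true but likewise not required for the chase (the paper's proof obtains the analogous fact $\mathrm{Im}\,\hat f_3=\hat f_3(M_3)$ from $\hat f_3\hat f_2=0$ after the change of basis), so Lemma~\ref{lem:BarF_3Zero} is not actually an input to your argument.
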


\begin{proof}
	Since $\hat R$ is free module over $\FU$, it is projective hence there exist $m$ linearly independent sections of $pr_{\hat R}\circ \hat f_2$, $\sigma_1,\dots, \sigma_m \in (\hat R)^2\oplus M_2$. Let $\psi_1,\dots, \psi_m \in (\hat R)^2\oplus M_2 $ be the image of the
	generators of $\hat R$ under $\hat f_1$. The hypothesis give isomorphisms
	\begin{spliteq}
		&   (\hat R)^2\oplus M_2\simeq \FU\langle \sigma_i, \psi_i\rangle_{i=1,\dots, m} \oplus M_2 =   (\hat R)^2\oplus M_2\\
		&  \hat R\oplus M_3 \simeq \FU\langle \hat f_2 (\sigma_i)\rangle_{i=1,\dots, m } \oplus M_3 = \hat R \oplus M_3. \\
	\end{spliteq}
	
		Using such isomorphisms, the upper row of \eqref{LES:1} becomes
	\begin{equation}\label{LES:2}	
	\begin{tikzcd}
		& 	\hat R \oplus M_1 \arrow[r,"A_1"] 
		& \hat R\oplus \hat R\oplus M_2 \arrow[r, "A_2"] 
		& \hat R\oplus M_3 \arrow[r, "f_3"]
		& \hat R \oplus M_1\arrow[r, "A_1"] & \cdots		
	\end{tikzcd}	
	\end{equation}
	where $A_1 = \begin{pmatrix}
								& 1& 0 \\
								& 0& 0 \\
								& 0 & C \\
							\end{pmatrix}$,
			 $A_2 = \begin{pmatrix}
								& 0 & 1 & 0  \\
								& D & 0 & F  \\
							\end{pmatrix}$ and $B,C,D,E,F$ are $\FU$-homomorphisms between the obvious modules.
	Notice that here we are using that $Hom_{\FU}(M_i, \FU) = \{0\}$	 since $M_i$ is $U$-torsion. 
	Moreover, the exactness of the sequence implies that $Im(\hat f_3) = \hat f_3 (M_3)$ and $D=0$.
		To finish the proof we need to show that  
		\begin{equation}\label{LES:3}
			M_1\overset{C}{\to} M_2\overset{F}{\to} M_3\overset{\hat f_3|_{M_3}}{\to} M_1\overset{C}{\to}\cdots,
		\end{equation}
	is an exact sequence. 
	
	\eqref{LES:3} is exact at $M_1$ because from the exactness of \eqref{LES:2}, $\ker A_1 = Im(\hat f_3) = \hat f_3(M_3) $ 
	and $\ker A_1 = \ker C \subset M_1$ because $A_1(x,y) = (x, 0, Cy)$. 
	
	\eqref{LES:3} is exact at $M_2$, indeed from $A_2\circ A_1 = 0$ we obtain that $FC = 0$ thus $Im(C)\subset \ker F$.
	To show the other inclusion, suppose that $F(x) = 0$, then $A_2(0,0,x) = 0$, hence by exactness of \eqref{LES:3}, exists $(a,b)\in \hat R \oplus M_1$, such that $(0,0,x) = A_1(a,b) = (a, 0, Cb)$, this forces $a =0$ and $x=C b$.
	
	\eqref{LES:3} is exact at $M_3$ because clearly $Im (F) \subset \ker\hat  f_3\cap M_3$.
	If $x\in M_3$ and  $\hat f_3(x) =0$ then by \eqref{LES:2}, $(0,x) = A_2(a,b,c) = (b, Fc )$ hence $b=0$ and $x=Fc$,
	hence  $x\in Im(F)$.\end{proof}

	\paragraph{Assumptions on the grading.}
	 In order to apply Lemma~\ref{lemma:ConclusionFromExactSeq}, we will make further
	assumptions on the grading of our modules.
	More precisely, we suppose that, \emph{up to degree shifts},  for some $b\in \N$,  $\hat R\simeq \HMfrom_\bullet(\#^b\SS^1\times \SS^2)$ and $\hat R\oplus \hat R\simeq \HMfrom_\bullet(\#^{b+1}\SS^1\times \SS^2)$ as  graded $\FU$-module.
	Moreover we will suppose that the map $\hat f_1$ and $\hat f_2$ have degree:
	\begin{align*}
		\gr(\hat f_1) = -1+\max\gr(\hat R \oplus \hat R) - \max \gr (\hat R)  & & &\gr(\hat f_2) = \max\gr(\hat R) - \max \gr(\hat R \oplus \hat R). 
	\end{align*}
	
	\begin{definition} We call grading assumption \textbf{AG} this set of  assumptions.
	\end{definition}\label{ass:AG}
	For the sake of simplicity, in the proofs below, we will assume that there are no degree shifts. 
	Thus $\hat R$ is identified with $\bigwedge \Z^b\otimes \FU$
	and $\hat R\oplus \hat R$ with $\bigwedge \Z^{b+1}\otimes \FU$, graded so that
	$\gr(e^{i_1}\wedge \dots \wedge e^{i_k})= -1 - k$, $\{e^j\}_{j=1}^n$ being the standard basis of $\Z^{n}$. Moreover $\gr(\hat f_1) = -1$ and $\gr (\hat f_2) = 0$.
	The modules $\bar R$ and $\bar R\oplus \bar R$ are graded similarly using $\FF[[U,U^{-1}]$ instead of $\FU$.
	We will denote the highest degree generator as $\mathbf{1}$.

	\begin{lemma} \label{lemma_algebraicCaseb0}Assume \textbf{AG}, and in addition that $b=0$.
	Then the conclusion of Lemma~\ref{lemma:ConclusionFromExactSeq} holds.
	\end{lemma}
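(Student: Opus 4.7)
The plan is to reduce to Lemma~\ref{lemma:ConclusionFromExactSeq} by verifying its hypothesis \eqref{eq:hypOnProjectionsExSeq}, i.e.\ that the sequence
\[
0 \to \hat R \xrightarrow{A_1} (\hat R)^2 \xrightarrow{A_2} \hat R \to 0,
\qquad A_1 := pr_{(\hat R)^2}\hat f_1|_{\hat R},\ A_2 := pr_{\hat R}\hat f_2|_{(\hat R)^2},
\]
is exact. First I would observe that $\hat f_1,\hat f_2,\hat f_3$ are block-triangular with respect to the free$\oplus$torsion decomposition: any $\FU$-homomorphism $M_i \to \hat R^j$ vanishes because the source is $U$-torsion and the target is $U$-torsion-free. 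Hence $A_1,A_2$ genuinely capture the ``free-to-free'' part of the triangle, and exactness of the displayed sequence is what the top row of \eqref{LES:1} reduces to once the torsion modules are set aside.

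Next I would exploit the grading assumption \textbf{AG} with $b=0$. Under the conventions fixed just before the statement, the free parts are $\hat R = \FU\langle 1\rangle$ with $\gr(1)=-1$ and $(\hat R)^2 = \FU\langle 1\rangle \oplus \FU\langle e^1\rangle$ with $\gr(1)=-1,\,\gr(e^1)=-2$, while $\hat f_1$ has degree $-1$ and $\hat f_2$ has degree $0$. Degree counting then forces $A_1(1) = c' e^1$ for some $c'\in \FF$ (the degree $-2$ part of $(\hat R)^2$ being spanned by $e^1$), and $A_2(1) = c_0\cdot 1$, $A_2(e^1)=0$ for some $c_0\in \FF$ (because $\hat R$ has no elements of even negative degree, killing the $e^1$-column of $A_2$). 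Commutativity of \eqref{LES:1} together with injectivity of $p_*$ ensures that the same constants $c',c_0$ describe $\bar f_1,\bar f_2$ on the corresponding top-degree generators.

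Finally I would invoke Lemma~\ref{lem:BarF_3Zero}: $\bar f_2$ surjective gives $c_0 \bar R = \bar R$, and together with exactness of the bottom row, $\mathrm{image}(\bar f_1)=\ker(\bar f_2) = \bar R\cdot e^1$ forces $c'\bar R e^1 = \bar R e^1$. Thus $c_0$ and $c'$ are degree-zero units in $\bar R = \FF[U,U^{-1}]$. The one subtle point, which I expect to be the main (minor) obstacle, is checking that such units lift to units of $\FU$: for $\FF$ a field this is automatic, and for $\FF=\Z$ one uses that the degree-zero units of $\Z[U,U^{-1}]$ are exactly $\pm 1$, which are units in $\Z[[U]]$. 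Once this is in hand, $A_1$ is injective with image $\FU e^1$ and $A_2$ is surjective with kernel $\FU e^1$, so $\mathrm{image}(A_1)=\ker(A_2)$ and \eqref{eq:hypOnProjectionsExSeq} holds. An application of Lemma~\ref{lemma:ConclusionFromExactSeq} then delivers the desired long exact sequence on the torsion modules $M_i$.
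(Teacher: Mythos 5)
Correct, and essentially the paper's own argument: you verify hypothesis \eqref{eq:hypOnProjectionsExSeq} by degree counting on the generators of the free summands combined with Lemma~\ref{lem:BarF_3Zero} and the exactness of the bottom row of \eqref{LES:1}, exactly as the paper does. The only (cosmetic) difference is that you spell out why the resulting coefficients are $\pm 1$ when the ground ring is $\Z$, a point the paper's proof treats more tersely.
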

	\begin{proof}
		We will show that the hypothesis of Lemma~\ref{lemma:ConclusionFromExactSeq} are satisfied.
		Since $b=0$, $\hat R\simeq \FU \langle\mathbf 1\rangle$  and 
			$(\hat R)^2\simeq \FU\langle \mathbf 1, e^1\rangle$.
		$\hat f_1(\mathbf1) $ is non-zero due to injectivity and for degree reasons, $pr_{\bar R^2}\hat f_1(\mathbf1) = \pm e^1$.
		Now the exactness of $\eqref{LES:1}$ implies that $\hat f_2(\mathbf 1) \neq 0$. Moreover $\hat f_2(\mathbf 1) \not \in M_3$
			for otherwise $\bar f_2(U^k \mathbf 1)  =0$ contradicting the surjectivity of $\bar f_2$. \end{proof}
			
		\begin{lemma}\label{lemma_algebraicCaseb1} Assume \textbf{AG}, and in addition that $b=1$ and either $M_1=0$ or $M_3 = 0$
		Then the conclusion of Lemma~\ref{lemma:ConclusionFromExactSeq} holds.
		\end{lemma}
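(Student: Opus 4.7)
The plan is to verify the hypothesis \eqref{eq:hypOnProjectionsExSeq} of Lemma~\ref{lemma:ConclusionFromExactSeq}, namely that $0\to\hat R\xrightarrow{\alpha}(\hat R)^2\xrightarrow{\beta}\hat R\to 0$ is exact, where $\alpha:=pr_{(\hat R)^2}\hat f_1$ and $\beta:=pr_{\hat R}\hat f_2$. Injectivity of $\alpha$ follows, exactly as in Lemma~\ref{lemma_algebraicCaseb0}, from injectivity of $\bar f_1$ (Lemma~\ref{lem:BarF_3Zero}) combined with injectivity of $p_*$ on $\hat R$; the relation $\beta\circ\alpha=0$ is a consequence of $\hat f_2\circ\hat f_1=0$ once one observes that any $\FU$-linear map from the $U$-torsion module $M_2$ to the $U$-torsion-free $\hat R$ must vanish. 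The essential new content for $b=1$ is middle exactness $\ker\beta=\mathrm{Im}\,\alpha$ together with surjectivity of $\beta$, and the two hypotheses $M_1=0$ and $M_3=0$ will be dealt with in different ways.

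In the case $M_3=0$ the target of $\hat f_2$ is just $\hat R$, so $\hat f_2(y,0)=\beta(y)$; consequently any $y\in\ker\beta$ lies in $\mathrm{Im}\,\hat f_1$ by LES exactness, and projecting onto $(\hat R)^2$ yields $\ker\beta\subseteq\mathrm{Im}\,\alpha$ for free. In the case $M_1=0$, Lemma~\ref{lem:BarF_3Zero} combined with $M_1=0$ forces $\hat f_3=0$, so the LES collapses to a SES $0\to\hat R\to(\hat R)^2\oplus M_2\to\hat R\oplus M_3\to 0$. Writing $\hat f_2=\begin{pmatrix}\beta & 0\\ \delta & \epsilon\end{pmatrix}$ relative to the free/torsion splitting (the upper-right entry is zero for the same torsion reason), the trick used for $\alpha$ also shows $\epsilon\colon M_2\to M_3$ is injective: if $\epsilon(m)=0$ then $(0,m)\in\ker\hat f_2=\mathrm{Im}\,\hat f_1$, and $\alpha$-injectivity forces $m=0$. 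Provided $\epsilon$ is moreover surjective, middle exactness follows by writing $\delta(y)=\epsilon(m)$ for $y\in\ker\beta$ and lifting $(y,-m)\in\ker\hat f_2$ through $\hat f_1$ to get $y=\alpha(x)$.

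Both cases close by the graded dimension bookkeeping afforded by \textbf{AG}: for $b=1$ one has $\dim_\FF(\hat R)^2_d=\dim_\FF\hat R_d+\dim_\FF\hat R_{d+1}$ in every degree $d$. When $M_3=0$, the identity $\dim_\FF\mathrm{Im}\,\beta_d=\dim_\FF(\hat R)^2_d-\dim_\FF\ker\beta_d=\dim_\FF(\hat R)^2_d-\dim_\FF\hat R_{d+1}=\dim_\FF\hat R_d$ forces $\mathrm{Im}\,\beta=\hat R$; when $M_1=0$, the SES gives $\dim_\FF(M_2)_d=\dim_\FF(M_3)_d$ in every degree, so the injective $\epsilon$ is automatically an isomorphism. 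The verified SES then feeds into Lemma~\ref{lemma:ConclusionFromExactSeq}. The main technical obstacle I anticipate is the case $\FF=\Z$, where dimension counts only give equalities of $\Z$-ranks rather than of $\Z$-modules; I would address this by observing that the graded SES splits off $\hat R_d\simeq\Z$ as a direct summand in each degree (the free parts are projective), so that $(M_2)_d\simeq(M_3)_d$ as abstract $\Z$-modules, and then using the graded $\FU$-linearity of the injective $\epsilon$ together with the structure theorem for finitely generated $\Z$-modules to upgrade rank equality to surjectivity. As the paper's application of this lemma is over $\FF=\F=\Z/2$, the field case is what is strictly needed for the main results.
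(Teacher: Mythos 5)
Over a field your argument is correct, and it takes a genuinely different route from the paper's. The paper works at the level of the explicit generators $\mathbf 1, e^1, e^2, e^1\wedge e^2$: using Lemma~\ref{lem:BarF_3Zero} and the bottom row of \eqref{LES:1} it shows that $pr_{\hat R}\hat f_2(\mathbf 1)=\pm\mathbf 1$ and that $\bar f_1(\mathbf 1)$ and $\bar f_1(e^1)$ are not divisible by $U$, and deduces exactness of \eqref{eq:hypOnProjectionsExSeq} from these generator-level statements. You instead decompose $\hat f_1,\hat f_2$ along the free/torsion splitting (using that any $\Lambda[[U]]$-map from a $U$-torsion module to $\hat R$ vanishes), get injectivity of $\alpha$ and, in the case $M_1=0$, injectivity of $\epsilon$, and close with a graded dimension count; this is arguably cleaner and makes the role of the two hypotheses $M_1=0$, $M_3=0$ more transparent. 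Two small omissions, both one-line fixes: in the case $M_1=0$ you never say why $\beta=pr_{\hat R}\hat f_2$ is onto (it is, because $\hat f_2$ is onto once $\hat f_3=0$ and its $M_2\to\hat R$ block vanishes), and in the case $M_3=0$ the step ``projecting onto $(\hat R)^2$'' again uses that $pr_{(\hat R)^2}\hat f_1$ kills $M_1$, i.e.\ the same torsion observation.

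The genuine gap is the case $\Lambda=\Z$, which the statement of the lemma includes (the appendix allows $\Lambda$ a field or $\Z$). Your dimension counts only control ranks there, and the sketched repair is not valid as stated: a short exact sequence of finitely generated abelian groups of the shape $0\to\hat R_{d+1}\to((\hat R)^2)_d\oplus(M_2)_d\to\hat R_d\oplus(M_3)_d\to 0$ does \emph{not} force $(M_2)_d\simeq(M_3)_d$ — for instance $0\to\Z\xrightarrow{(2,0)}\Z^2\to\Z\oplus\Z/2\to 0$ is exact with trivial torsion on the left and $\Z/2$ on the right — and likewise, in the case $M_3=0$, equality of the ranks of $\mathrm{Im}\beta_d$ and $\hat R_d$ does not give surjectivity of $\beta$ over $\Z$ (the image could be $2\Z\subset\Z$). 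What excludes such behaviour is not the graded short exact sequence alone but the comparison with the $\bar R$-row: surjectivity of $\bar f_2$ and $\bar f_3=0$ from Lemma~\ref{lem:BarF_3Zero}, combined with the grading, give integral statements in each degree (surjectivity of $pr_{\hat R}\hat f_2$, hence $pr_{\hat R}\hat f_2(\mathbf 1)=\pm\mathbf 1$, and non-divisibility of $\bar f_1(\mathbf 1),\bar f_1(e^1)$ by $U$), and it is these that replace your dimension count when $\Lambda=\Z$; this is exactly how the paper argues. Since the paper only applies this lemma with $\Z/2$-coefficients, your proof covers what the applications need, but as a proof of the lemma as stated it is incomplete without an integral argument of this kind.
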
	
		\begin{proof} Since $b=1$, $\hat R\simeq \FU \langle\mathbf 1, e^1\rangle$  and 
			$(\hat R)^2\simeq \FU\langle \mathbf 1, e^1, e^2, e^1\wedge e^2\rangle$.
			
		Case $M_1 = 0$. In this case  $\hat f_3 = 0$. Hence $\hat f_2$ and $pr_{\hat R}\hat f_2$ are surjective. 
		The grading implies that  $pr_{\hat R}f_2(\mathbf 1) = \pm \mathbf 1$.  
		The exactness of the sequence \eqref{eq:hypOnProjectionsExSeq}, will follow if we prove that
		$\bar f_1(\mathbf 1)$ and $\bar f_1(e^1)$ are not divided by $U$.		
		For degree reasons, $\bar f_1(e^1)\in \Z \langle e^1,e^2 \rangle $  thus  it is not divided by $U$.
		On the other hand, $\bar f_1(\mathbf 1)\in \Z \langle U\mathbf 1,e^1\wedge e^2 \rangle $,
		however  $\bar f_1(\mathbf 1) $ cannot be a multiple of $U\mathbf 1$  for otherwise
			$\bar f_2(U\mathbf 1) = 0$.
		
		Case $M_3 = 0$. Since $M_3 = 0$ and $\bar f_2$ is surjective, 
		we obtain that $pr_{\hat R}\hat f_2$ is surjective too by looking at the grading of the generators.
		Now we argue as in the previous case.		
		\end{proof}
		
%

\bibliographystyle{alpha}
\bibliography{bibliography}

\vspace{0.3cm}

\end{document}